\title{Aggregation Methods for Computing Steady-States in Statistical Physics}
\newcommand{\ignore}[1]{}
\renewcommand{\P}{\mathbb{P}}
\newcommand{\1}{\mathds{1}}
\renewcommand{\t}{\mathrm{t}}
\newcommand{\Real}{\mathbb{R}}
\newcommand{\eps}{\varepsilon}
\DeclareMathOperator{\rg}{Rg}
\DeclareMathOperator{\diag}{diag}
\DeclareMathOperator{\spn}{span}
\newcommand{\gea}[1]{}
\theoremstyle{Theorem}
\newtheorem{theorem}{Theorem}
\theoremstyle{Lemma}
\newtheorem{lemma}{Lemma}
\theoremstyle{Corollary}
\newtheorem{corollary}{Corollary}
\theoremstyle{Remark}
\theoremstyle{Definition}
\newtheorem{definition}{Definition}
\newtheorem{assumption}{Assumption}
\newtheorem{algorithm}{Algorithm}
\author{Gabriel Earle}
\address[Gabriel Earle]{Department of Mathematics and Statistics, University of Massachusetts, Amherst}
\author{Brian Van Koten}
\address[Brian Van Koten]{Department of Mathematics and Statistics, University of Massachusetts, Amherst}
\email[Correspoding author]{bvankoten@umass.edu}
\thanks{BvK and GE were supported by NSF DMS-2012207}
\begin{document}
\maketitle

\begin{abstract}
We give a new proof of local convergence of a multigrid method called iterative aggregation/disaggregation (IAD) for computing steady-states of Markov chains. Our proof leads naturally to precise and interpretable estimates of the asymptotic rate of convergence. We study IAD as a model of more complex methods from statistical physics for computing nonequilibrium steady-states, such as the nonequilibrium umbrella sampling method of Warmflash, et al. We explain why it may be possible to use methods like IAD to efficiently calculate steady-states of processes in statistical physics and how to choose parameters to optimize efficiency.
\end{abstract}

\section{Introduction}

We prove local convergence of iterative aggregation/disaggregation (IAD) for computing steady-states of Markov chains, and we estimate the asymptotic rate of convergence. IAD was devised in the 1960's to solve economic input-output models; see the references given in~\cite{mandel_local_1983,vakhutinsky_iterative_1979}. Substantially equivalent methods were independently developed in the 1980's to calculate steady-states of Markov chains~\cite{koury_iterative_1984,chatelin_acceleration_1982,haviv_aggregationdisaggregation_1987,cao_iterative_1985}. In the 2000's, similar ideas arose for a third time as part of more complex methods for calculating nonequilibrium steady-states and reaction rates in statistical physics~\cite{warmflash_umbrella_2007,vanden-eijnden_exact_2009,bhatt_steady-state_2010,bello-rivas_exact_2015,dinner_trajectory_2018,earle_convergence_2022}. We study IAD as a simple model of these complex methods. We explain why it may be possible to use methods like IAD to efficiently calculate steady-states in statistical physics and how to choose parameters to optimize efficiency. We hope others will apply our results to understand IAD in other contexts.

We call a Markov process \emph{nonequilibrium} if it is irreversible. A physical system subject to nonconservative forces or external flows of energy and matter would typically be modeled by a nonequilibrium process, e.g.\@ a single-molecule experiment where a protein is subjected to a flow of ions~\cite{li_models_2009,dickson_flow-dependent_2011}.
In principle, to sample the steady-state distribution of any ergodic process, reversible or irreversible, one can take the average over a long trajectory. In practice, however, trajectory averages converge to the steady-state very slowly when obstacles like bottlenecks inhibit exploration of the state space. For example, a process modeling a protein may spend most of the time vibrating around some stable folded state, undergoing transitions between different folded states only rarely. Such a process is said to be \emph{metastable}. 

Computing the steady-state of a metastable, nonequilibrium process is especially difficult. 
Reliable methods have been devised to efficiently compute steady-states of reversible, metastable processes, e.g.\@ parallel tempering~\cite{swendsen_replica_1986,geyer_markov_1991}, umbrella sampling~\cite{torrie_nonphysical_1977,kumar_weighted_1992,shirts_statistically_2008}, metadynamics~\cite{laio_escaping_2002}, and adaptive biasing~\cite{darve_calculating_2001}. These methods are essential tools for simulating systems in equilibrium. Unfortunately, however, none of them can compute nonequilibrium steady-states. Each requires either reversibility or knowledge of the steady-state density, and when computing nonequilibrium steady-states one typically knows only the generator of the process. By constrast, in equilibrium, the steady-state has the Boltzmann density, which can almost always be calculated up to a normalizing constant.

Recently, analogous methods have been devised to compute nonequilibrium steady-states and dynamical quantities such as reaction rates. We consider one class derived from umbrella sampling, including nonequilibrium umbrella sampling (NEUS)~\cite{warmflash_umbrella_2007}, trajectory parallelization and tilting~\cite{vanden-eijnden_exact_2009}, weighted ensemble with a direct solve~\cite{bhatt_steady-state_2010}, exact milestoning~\cite{bello-rivas_exact_2015}, trajectory stratification~\cite{dinner_trajectory_2018}, and injection measures~\cite{earle_convergence_2022}. The exact objectives and details of these methods differ significantly, but they are all essentially stochastic evolving particle systems that approximate IAD (or a similar deterministic dynamics) in the limit of a large number of particles. We refer to~\cite{earle_convergence_2022} for details. We ask whether approximating IAD is a suitable goal for an algorithm designed to compute steady-states in statistical physics.

IAD is like an algebraic multigrid method, but it is nonlinear and nonsymmetric which significantly complicates its analysis, cf.\@ Appendix~\ref{apx: iad as multigrid}. In the earliest convergence analysis of IAD known to us, Mandel and Sekerka proved local convergence for a class of problems including the solution of input-output models but not steady-states of Markov chains~\cite{mandel_local_1983}. Later work verified local convergence for Markov chains under various conditions and for various versions of the IAD algorithm~\cite{marek_local_1994,krieger_two-level_1995,marek_convergence_1998,marek_convergence_2003,marek_note_2006,earle_convergence_2022}. We are not aware of a proof of global convergence that holds under general conditions. However, see~\cite{marek_note_2006} for a proof of global convergence under somewhat restrictive conditions and examples where IAD fails to converge. The efficiency of IAD has been studied in special cases, including nearly completely decomposable chains~\cite{koury_iterative_1984} and cyclic chains~\cite{pultarova_fourier_2013}. 

We contribute a new proof of local convergence of IAD under weak conditions that are easy to verify. The usual conditions that guarantee convergence of a Markov chain to a steady-state, irreducibility and aperiodicity, do not suffice to prove even local convergence of IAD, cf.\@ Appendix~\ref{apx: pathological examples} and~\cite{marek_note_2006}. If $P$ is the transition matrix of the chain, we prove local convergence when $P$ and $P^\t P$ are irreducible, cf.\@ Theorem~\ref{thm: local convergence}. It is equivalent to assume that the chain is strictly contracting in a certain norm, cf.\@~Lemma~\ref{lem: convergence of power method}. A sufficient condition is that $P$ be irreducible and have positive diagonal.
 
Our proof of local convergence leads to precise and interpretable estimates of the asymptotic rate of convergence, cf.\@ Theorem~\ref{thm: estimate of rate of convergence with spectrum and angle involved} and Corollary~\ref{cor: estimate of rate in reversible case}. Based on these estimates, we have developed some general advice to guide the choice of parameters in IAD; see the discussion following Corollary~\ref{cor: estimate of rate in reversible case}. We apply our theory of the rate of convergence in Section~\ref{sec: examples} to explain why it may be possible to use methods like IAD to efficiently compute steady-states of reversible or irreversible processes arising in statistical physics and computational chemistry. To be precise, in Section~\ref{sec: examples}, we introduce a family of Markov chains analogous to the metastable diffusion processes that are widely used as models of molecular systems. We then explain why IAD can sometimes efficiently calculate the steady-states of such chains and how to choose the parameters in practice. We illustrate our conclusions with numerical experiments. We developed our rate estimates with these examples from statistical physics in mind, but our results are general, and we hope others will apply them to understand IAD in other contexts.

\section{Notation}

Here, we summarize our notation. Notation is also explained below when it first appears.

\begin{itemize}
  \item $P \in \Real^{N \times N}$ will be an irreducible, column stochastic transition matrix with invariant distribution $\mu \in \Real^N$. 
  \item 
    For any matrix or vector $M$, $M \geq 0$ means all entries of $M$ are nonnegative. $M>0$ means all entries are positive.
  \item $\1$ will denote a vector of all ones and $I$ will denote the identity matrix. The dimension of $\1$ or $I$ will be determined by the context. 
    \item For any $A \subset \{1, \dots, N\}$, $\1_A$ will denote the characteristic function of $A$. That is, $\1_A(x) = 1$ if $x \in A$ and $\1_A(x) = 0$ if $x \notin A$. 
  \item For any vector $\nu \in \Real^k$, we let $\diag(\nu) \in \Real^{k \times k}$ denote the diagonal matrix with $\diag(\nu)_{ii} = \nu_i$ for $i = 1,\dots, k$.
  \item $\lVert \cdot \rVert_\nu$ and $\langle, \rangle_\nu$ denote the $\ell^2(\nu)$-norm and inner product, respectively; see Definition~\ref{def: weighted l2}. For $M \in \Real^{k \times k}$, $M^{\ast,\nu}$ denotes the adjoint of $M$ with respect to the $\ell^2(\nu)$-inner product. In some proofs, we simplify notation, letting $\lVert \cdot \rVert$, $\langle, \rangle$, and $M^\ast$ denote the $\ell^2(1/\mu)$-norm, inner product, and adjoint, respectively. 
  \item For any operator $M \in \Real^{M \times M}$, $\rg(M)$ denotes the range of $M$.
\end{itemize}

\section{The Iterative Aggregation/Disaggregation Method}
\label{sec: IAD}

Iterative Aggregation/Disaggregation (IAD) is a numerical method for computing the steady-state distribution of a Markov chain. 
Let $P \in \Real^{N \times N}$ be the \emph{column}\footnote{A matrix is (row) stochastic if its entries are nonnegative and each row sums to one. A matrix is column stochastic if it is nonnegative and each column sums to one. Note that the usual convention in the probability literature is for the transition matrix to be \emph{row} stochastic, so if $X_t$ is a Markov chain with transition matrix $P$, then $\P[X_{t+1}=j \vert X_t =i ] = P_{ij}$. Following the literature on IAD, we adopt the opposite convention, taking $\P[X_{t+1}=j \vert X_t =i ] = P_{ji}$.} stochastic transition probability matrix of a discrete-time Markov chain on the state space
\begin{equation*}
  \Omega=\{1, \dots, N\}.
\end{equation*}
We call $\Omega$ the \emph{fine space}.
We assume that $P$ is irreducible, so there is a unique steady-state probability vector $\mu \in \Real^N$ solving
\begin{equation*}
  P \mu = \mu.
\end{equation*}

In each step of IAD, one calculates a coarse approximation to $P$ based on a user-specified partition of $\Omega$ into disjoint sets $\{S_i: i=1, \dots, n\}$. We call each set $S_i$ a \emph{coarse state}, and we call the set $\{1, \dots, n\}$ of indices the \emph{coarse space}. To define the coarse approximation, we specify operators mapping between the sets of probability vectors on the fine and coarse spaces. The \emph{aggregation operator} maps vectors on the fine space to vectors on the coarse space.

\begin{definition}
We define the \emph{aggregation operator} $A:\Real^N \rightarrow \Real^n$ by
\begin{equation*}
  (A \nu)_i := \nu^\t \1_{S_i} = \sum_{x \in S_i} \nu_x
\end{equation*}
 for any $i =1, \dots, n$  and  $\nu \in \Real^N$.
\end{definition}

Note that when $\nu$ is a probability vector, $A \nu_i$ is simply the probability of $S_i$ under $\nu$. Moreover, if $\nu$ is a probability vector, then so is $A\nu$.

The \emph{disaggregation operator} maps vectors on the coarse space to vectors on the fine space. It depends on the current approximation $\tilde \mu$ of the steady-state $\mu$. 

\begin{definition}
  Given a probability vector $\tilde \mu \in \Real^N$ with $A\tilde \mu >0$ and a coarse state $S_i$, define the conditional distribution $\tilde \mu(\cdot \vert S_i)$ by
  \begin{equation*}
    \tilde \mu (j \vert S_i) = \frac{\tilde \mu_j \1_{S_i}(j)}{A\tilde \mu_i}
  \end{equation*}
  for $j \in \Omega$. 
   Here, $\1_{S_i}$ denotes the characteristic function of $S_i$. 
  Define the \emph{disaggregation operator} $D(\tilde \mu):\Real^n \rightarrow \Real^N$ by
\begin{equation*}
  D(\tilde \mu) z_j :=  \sum_{i=1}^n z_i \tilde \mu (j \vert S_i)
\end{equation*}
for any $z \in \Real^n$.
\end{definition}

Note that if $z$ is a probability vector, so is $D(\tilde \mu)z$. Also, observe that $D(\tilde \mu)$ is defined only when $A\tilde \mu >0$. This will always be the case in practice under our assumptions, cf.~Lemma~\ref{lem: well-posedness}.  

Given an approximation $\tilde \mu$ of $\mu$, the coarse approximation $C(\tilde \mu)$ to $P$ is defined by composing $P$ with $A$ and $D(\tilde \mu)$.
\begin{definition}
Let $\tilde \mu \in \Real^N$ be a probability vector with $A \tilde \mu >0$. We define the \emph{coarse approximation} $C(\tilde \mu) \in \Real^{n \times n}$ by
\begin{equation*}
  C(\tilde \mu) =A P D(\tilde \mu).
\end{equation*}
\end{definition}
The coarse approximation $C(\tilde \mu)$ is a column stochastic matrix. To see this, note that $C(\tilde \mu)$ maps probability vectors to probability vectors, since each of $A$, $ P$, and $D(\tilde \mu)$  maps probability vectors to probability vectors. In each step of IAD, one solves for the steady-state of $C(\tilde \mu)$.  It is convenient to establish some general notation for this operation.

\begin{definition}
  For $M$ an irreducible and column stochastic matrix, we let $z(M)$ denote the unique probability vector solving
  \begin{equation*}
    z(M) = M z(M).
  \end{equation*}
\end{definition}
Now let $\mu^0 \in \Real^N$ be a user-specified initial approximation of $\mu$.
In IAD, one alternates coarse correction and smoothing steps.
Given a probability $\mu^k \in \Real^N$, the \emph{coarse correction} step is to compute
\begin{equation}\label{eq: coarse correction step}
  \mu^{k + \frac12} =  D(\mu^k) z(C(\mu^k)).
\end{equation}
To compute $z(C(\mu^k))$ in practice, one can use the algorithm outlined in Appendix~\ref{apx: computing coarse distribution} or other direct methods of numerical linear algebra.
The \emph{smoothing} step is to compute
\begin{equation}\label{eq: smoothing step}
  \mu^{k+1} = P \mu^{k + \frac12}.
\end{equation}
Note that the smoothing step is the same as one step of the power method for calculating $\mu$ and also the same as evolving $\mu^{k+\frac12}$ by one step under the forwards equation for the chain.

Many variations of IAD have appeared in the literature. We will not treat all of them.
Some versions apply to substochastic problems such as economic input-output models~\cite{mandel_local_1983,vakhutinsky_iterative_1979}. Others use different smoothers~\cite{krieger_two-level_1995,marek_local_1994,marek_convergence_1998}, apply multiple smoothing iterations at each step~\cite{krieger_two-level_1995,marek_local_1994,marek_convergence_1998}, smooth the aggregation or disaggregation operators~\cite{de_sterck_smoothed_2010}, use a hierarchy of several coarse approximations in a multigrid V or W-cycle~\cite{de_sterck_multilevel_2008}, or solve infinite-dimensional problems using more general aggregation and disaggregation operators~\cite{marek_local_1994,earle_convergence_2022}. We do not consider these possibilities.

We now summarize our assumptions and show that IAD is well-posed.
\begin{assumption}
  \label{asm: main assumptions}
  We assume:
  \begin{enumerate}
    \item $P$ is irreducible.  
  \item The initial approximation $\mu^0$ of the steady-state $\mu$ is strictly positive. 
  \end{enumerate}
\end{assumption}

\begin{lemma}
  \label{lem: well-posedness}
    If Assumption~\ref{asm: main assumptions} holds, then the iterates $\mu^k$ produced by IAD are defined for all $k \in \mathbb{N}$. In particular, $A \mu^k>0$ and $C(\mu^k)$ is irreducible, so $D(\mu^k)$ and $z(C(\mu^k))$ are defined.
\end{lemma}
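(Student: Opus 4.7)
The plan is to prove by induction on $k$ that $\mu^k > 0$, since this single invariant carries all the well-posedness conclusions: strict positivity of $\mu^k$ immediately gives $A\mu^k > 0$ (so $D(\mu^k)$ is defined) and also, as we will see, irreducibility of $C(\mu^k)$ (so $z(C(\mu^k))$ is defined by Perron--Frobenius). The base case $k = 0$ is the second half of Assumption~\ref{asm: main assumptions}.

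For the inductive step, assume $\mu^k > 0$. The central, non-routine step is to show that the coarse approximation $C(\mu^k)$ is irreducible. Expanding its definition gives
\begin{equation*}
  C(\mu^k)_{ji} \;=\; \frac{1}{A\mu^k_i}\sum_{y \in S_j}\sum_{x \in S_i} P_{yx}\,\mu^k_x,
\end{equation*}
so positivity of $\mu^k$ implies $C(\mu^k)_{ji} > 0$ if and only if there exist $x \in S_i$ and $y \in S_j$ with $P_{yx} > 0$; equivalently, the directed support graph of $C(\mu^k)$ agrees with the quotient of the support graph of $P$ by the partition $\{S_i\}$. To verify irreducibility, I would fix indices $i, j$ on the coarse space, pick any $x_0 \in S_i$ and $y_0 \in S_j$, invoke irreducibility of $P$ to obtain a fine-space path $x_0 = v_0, v_1, \ldots, v_m = y_0$ with $P_{v_{\ell+1} v_\ell} > 0$ for all $\ell$, and then read off the coarse indices $i_\ell$ with $v_\ell \in S_{i_\ell}$ to obtain a path $i = i_0, i_1, \ldots, i_m = j$ along positive entries of $C(\mu^k)$. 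Consecutive repeats in this coarse path are harmless since they only require $C(\mu^k)_{i_\ell i_\ell} > 0$, which is already guaranteed by the fine-space step $v_\ell \to v_{\ell+1}$.

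With $C(\mu^k)$ irreducible and column stochastic, Perron--Frobenius produces a unique, strictly positive invariant vector $z(C(\mu^k))$, so~\eqref{eq: coarse correction step} is well-defined. Substituting into the formula for $D(\mu^k)$, the intermediate iterate satisfies
\begin{equation*}
  \mu^{k+\frac12}_j \;=\; \frac{z(C(\mu^k))_{i(j)}\,\mu^k_j}{A\mu^k_{i(j)}} \;>\; 0,
\end{equation*}
where $i(j)$ denotes the unique coarse index with $j \in S_{i(j)}$. Finally, $\mu^{k+1} = P\mu^{k+\frac12}$ is strictly positive because every row of an irreducible matrix $P$ contains at least one positive entry (if row $i$ were zero, then $(P^n)_{ij} = 0$ for all $n \geq 1$ and all $j$, so state $i$ would be unreachable, contradicting irreducibility). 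This closes the induction.

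The main obstacle is verifying irreducibility of $C(\mu^k)$; the rest of the argument is a direct unwinding of the definitions combined with one application of Perron--Frobenius. It is worth noting that strict positivity of $\mu^k$ is essential to this lift-and-collapse argument: if $\mu^k$ could vanish on some fine state inside $S_i$, then the coarse graph might fail to inherit the connectivity of the fine graph, which is exactly why Assumption~\ref{asm: main assumptions} requires $\mu^0 > 0$ rather than just irreducibility of $P$.
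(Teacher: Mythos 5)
Your proof is correct and follows essentially the same strategy as the paper's: induction on the invariant $\mu^k > 0$, reducing well-posedness to irreducibility of $C(\mu^k)$, then applying Perron--Frobenius and propagating positivity through $D(\mu^k)$ and $P$. The only difference is stylistic: you establish irreducibility of $C(\mu^k)$ by lifting a fine-space path to a coarse-space walk, whereas the paper argues contrapositively that reducibility of $C(\mu^k)$ would force reducibility of $P$; both amount to the observation that, for $\mu^k > 0$, the support graph of $C(\mu^k)$ is exactly the quotient of the support graph of $P$.
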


\begin{proof}
See Appendix~\ref{apx: well-posedness}.
\end{proof}

If one does not assume $\mu^0 >0$, then $C(\mu^0)$ may be reducible, in which case the steady-state $z(C(\mu^0))$ need not be unique. This can occur even when $P$ is both irreducible and aperiodic; see Appendix~\ref{apx: pathological examples} for an example.

Finally, we present a complete version of IAD with a termination criterion similar to those typically used in practice. 

\begin{algorithm}
  The user must specify the following:
  \begin{enumerate}
  \item A column stochastic and irreducible transition matrix $P \in \Real^{N \times N}$.
  \item A partition $\{S_i: i =1, \dots, n\}$ of $\{1, \dots, N\}$ into disjoint sets.
  \item A probability vector $\mu^0 \in \Real^N$ with $\mu_0 >0$.
  \item An error tolerance $\tau>0$.
  \end{enumerate}
  Given these data, IAD proceeds as follows:
  \begin{enumerate}
  \item Set $ \mu^{\textrm{old}}=  \mu^0$.
  \item Calculate $z(C( \mu^{\textrm{old}}))$ using the algorithm in Appendix~\ref{apx: computing coarse distribution}. Set 
    \begin{equation*}
       \mu^{\textrm{new}} =  P D( \mu^{\textrm{old}}) z(C( \mu^{\textrm{old}})).
    \end{equation*}
  \item If 
      \begin{equation*}
      \max_{i \in 1, \dots, N} \frac{\lvert \mu^{\textrm{new}}_i - \mu^\textrm{old}_i \rvert}{\mu^\textrm{old}_i} \leq \tau \text{ and } \max_{i \in 1, \dots, N} \frac{\lvert P \mu^{\textrm{new}}_i - \mu^{\textrm{new}}_i \rvert}{P \mu^{\textrm{new}}_i} \leq \tau,
    \end{equation*}
    then output $ \mu^{\textrm{new}}$.
    Otherwise, set $ \mu^{\textrm{old}} =  \mu^{\textrm{new}}$, and return to step (2) above. 
  \end{enumerate}
\end{algorithm}

\section{Local Convergence of IAD}
\label{sec: local convergence}

In this section, we prove local convergence of IAD. That is, we show that if the initial approximation $\mu^0$ is sufficiently close to the true steady-state $\mu$, then $\mu^k$ converges to $\mu$. We begin with an analysis of the smoothing step of IAD in Section~\ref{subsec: power method}, which is the power method. In Section~\ref{subsec: local convergence}, we prove local convergence of IAD.  

\subsection{The Power Method}
\label{subsec: power method}

Here, we prove convergence of the power method (or equivalently convergence of the forwards equation of the chain) to the steady-state $\mu$. Of course, convergence of the power method is already well-understood. The details of our particular proof will be instrumental in our analysis of the efficiency of IAD, however. We begin by defining convenient norms.

\begin{definition}
  \label{def: weighted l2}
  Let $\nu \in \Real^k$ with $\nu >0$. We define the \emph{$\ell^2(\nu)$-norm and inner product} by 
  \begin{equation*}
    \langle x,y \rangle_{\nu} =  \sum_{i=1}^k x_i y_i \nu_i \text{ and } \lVert x \rVert_{\nu} = \langle x, x \rangle_{\nu}^{\frac12},
  \end{equation*}
  for $x, y \in \Real^k$.  Given $M\in \Real^{k \times k}$, we let $\lVert M \lVert_\nu$ denote the induced operator norm. We let $M^{\ast,\nu}$ be the adjoint matrix so that
  \begin{equation*}
    \langle M x, y \rangle_{\nu} = \langle x, M^{\ast,\nu} y \rangle_{\nu}
  \end{equation*}
  for all $x,y \in \Real^k$. 
\end{definition}

We will use the $\ell^2(1/\mu)$-norm to measure discrepancies between probability measures on $\Omega$, for example the error $\mu^k - \mu$ after $k$ steps of IAD. The $\ell^2(\mu)$-norm will arise when we analyze the efficiency of IAD. As a first step in our analysis of the power method, we relate adjoints in the $\ell^2(1/\mu)$-inner product to time reversals. 

\begin{lemma}
  \label{lem: time reversal}
  Let $P \in \Real^{N \times N}$ be column stochastic and irreducible.
  The time reversal of $P$ is $P^{\ast, 1/\mu}$. In particular,
  \begin{equation}
    \label{eq: formula for adjoint}
    P^{\ast, 1/\mu}  = \diag(\mu) P^\t \diag(1/\mu)
  \end{equation}
  is column stochastic, irreducible, and has invariant distribution $\mu$. 
\end{lemma}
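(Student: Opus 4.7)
The plan is to verify the formula for $P^{\ast,1/\mu}$ by direct calculation, and then read off column stochasticity, irreducibility, invariance of $\mu$, and the time-reversal interpretation from that formula.

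First I would compute the adjoint explicitly. By the definition of the $\ell^2(1/\mu)$-inner product, $\langle Px,y\rangle_{1/\mu}=\sum_{i,j} P_{ij} x_j y_i/\mu_i$, which I rewrite as $\sum_j x_j (\mu_j^{-1})\sum_i (\mu_j P_{ij}/\mu_i) y_i$. Matching this against $\langle x, P^{\ast,1/\mu}y\rangle_{1/\mu}=\sum_j x_j (P^{\ast,1/\mu}y)_j/\mu_j$ for all $x,y$ yields $(P^{\ast,1/\mu})_{ji} = \mu_j P_{ij}/\mu_i$, which is exactly the $(j,i)$ entry of $\diag(\mu)P^\t\diag(1/\mu)$. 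This gives \eqref{eq: formula for adjoint}.

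From this formula the remaining claims follow almost immediately. For column stochasticity, the sum of column $i$ is $\sum_j \mu_j P_{ij}/\mu_i = (P\mu)_i/\mu_i = 1$, using $P\mu=\mu$. For invariance, $(P^{\ast,1/\mu}\mu)_j = \sum_i \mu_j P_{ij}\mu_i/\mu_i = \mu_j\sum_i P_{ij} = \mu_j$ since $P$ is column stochastic. For irreducibility, note that $(P^{\ast,1/\mu})_{ji}>0$ iff $P_{ij}>0$, so the directed graph of $P^{\ast,1/\mu}$ is obtained by reversing every edge of the graph of $P$; reversing all edges of a strongly connected digraph leaves it strongly connected, so irreducibility transfers.

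Finally, for the time-reversal interpretation, I would compute $\mathbb{P}[X_t=j\mid X_{t+1}=i]$ when $X_t\sim\mu$. Bayes' rule gives $\mathbb{P}[X_t=j\mid X_{t+1}=i]=\mathbb{P}[X_{t+1}=i\mid X_t=j]\mu_j/\mu_i = P_{ij}\mu_j/\mu_i$, which under our column-stochastic convention (reversed chain with transition matrix $\hat P$ satisfying $\hat P_{ji}=\mathbb{P}[X_t=j\mid X_{t+1}=i]$) is exactly $(P^{\ast,1/\mu})_{ji}$. So $P^{\ast,1/\mu}$ is the time reversal.

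None of the steps is genuinely hard; the only thing that requires care is keeping the indexing consistent, since the paper uses the column-stochastic convention $\mathbb{P}[X_{t+1}=j\mid X_t=i]=P_{ji}$, which flips the roles of rows and columns relative to the standard probability-textbook convention and so must be tracked carefully both in the adjoint computation and in the Bayes' rule step.
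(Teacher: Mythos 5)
Your proposal is correct and the core computation is the same as the paper's: both derive \eqref{eq: formula for adjoint} by unwinding the $\ell^2(1/\mu)$-inner product. The only difference is how the remaining claims are handled --- the paper identifies $P^{\ast,1/\mu}$ as the time reversal and then cites standard properties of time reversals (Norris, Theorem~1.9.1) for column stochasticity, irreducibility, and invariance of $\mu$, whereas you verify each of these directly from the explicit formula; your direct verifications (column sums via $P\mu=\mu$, invariance via column stochasticity of $P$, irreducibility via edge-reversal of the digraph) are elementary and make the lemma self-contained, which is a mild improvement in exposition but not a different method.
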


\begin{proof}
  See Appendix~\ref{apx: time reversal}.
\end{proof}

The spectrum of the operator $P^{\ast,1/\mu} P$ will play a crucial role in our proof of convergence of the power method and in our efficiency analysis of IAD. By Lemma~\ref{lem: time reversal}, $P^{\ast, 1/\mu} P$ is column stochastic with invariant distribution $\mu$. Therefore, $\1$ is a left eigenvector of $P^{\ast ,1/\mu} P$ with eigenvalue $1$, and $\mu$ is the corresponding right eigenvector. (Here, $\1 \in \Real^{N}$ denotes the vector whose entries are all equal to one.) Moreover, $P^{\ast ,1/\mu} P$ is self-adjoint and positive semidefinite with respect to the $\ell^2(1/\mu)$-inner product. Therefore, $\sigma(P^{\ast ,1/\mu} P) \subset [0,1]$. Let
\begin{equation*}
  1 = \lambda_1 \geq \lambda_2 \geq \dots \geq \lambda_N \geq 0
\end{equation*}
be the eigenvalues of $P^{\ast ,1/\mu} P$ listed in decreasing order and with repetition if any have multiplicity greater than one.
Let $v_1, \dots, v_N$ be the corresponding right eigenvectors.
 Since $P^{\ast, 1/\mu} P$ is self-adjoint, we may assume that the eigenvectors are an orthonormal basis of $\Real^N$ with the $\ell^2(1/\mu)$-inner product, so
\begin{equation*}
   \langle v_i, v_j \rangle_{1/\mu} = \delta_{ij},
\end{equation*}
and we have the diagonalization 
\begin{equation}
  \label{eq: diagonalization}
 P^{\ast, 1/\mu} P = \mu \1^\t + \sum_{k=2}^N \lambda_k v_k v_k^\t \diag(1/\mu).
\end{equation}
We will refer to this diagonalization frequently. Note that the left eigenvectors of $P^{\ast, 1/\mu}P$ are
\begin{equation*}
  v_1'=\1, v_2'= \diag(1/\mu) v_2 , \dots, v_N' =\diag(1/\mu) v_N.
\end{equation*}
The left eigenvectors play an important role in our efficiency analysis of IAD.

We now show that when $P$ is irreducible, the power method for computing $\mu$ is strictly contracting in the $\ell^2(1/\mu)$-norm if and only if $P^\t P$ is irreducible.

\begin{lemma}
  \label{lem: convergence of power method}
  Let $P \in \Real^{N \times N}$ be an irreducible, column stochastic matrix. 
  Let $\nu^0 \in \Real^N$ be a probability vector, and define the power method iteration
\begin{equation*}
  \nu^{k+1} =  P \nu^k.
\end{equation*} 
Define
\begin{equation*}
  \hat P = P - \mu \1^\t.
\end{equation*}
We have
\begin{equation}
  \label{eqn: power method error}
  \nu^{k+1}- \mu = \hat P  (\nu^k - \mu).
\end{equation}
Moreover,
\begin{equation*}
  \lVert \hat P \rVert_{1/\mu} = \sqrt{\lambda_2},
\end{equation*}
and $\lambda_2 < 1$ if and only if $P^\t P$ is irreducible.
\end{lemma}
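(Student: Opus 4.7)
The plan is to verify the three claims in order: the error recursion, the operator-norm formula, and the equivalence between $\lambda_2 < 1$ and irreducibility of $P^\t P$.

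For the error recursion \eqref{eqn: power method error}, I would first note that $\mu = P\mu$, so $\nu^{k+1} - \mu = P(\nu^k - \mu)$. Then because $\nu^k$ and $\mu$ are both probability vectors, $\1^\t(\nu^k - \mu) = 0$, so the rank-one correction $\mu \1^\t$ kills $\nu^k - \mu$ and $P(\nu^k - \mu) = \hat P(\nu^k - \mu)$. This step is essentially immediate.

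For the norm formula, I would compute $\hat P^{\ast,1/\mu} \hat P$ and compare with the diagonalization~\eqref{eq: diagonalization}. First, I would check using the defining identity $\langle \mu \1^\t x, y\rangle_{1/\mu} = (\1^\t x)(\1^\t y)$ that $(\mu \1^\t)^{\ast, 1/\mu} = \mu \1^\t$. Expanding then gives
\begin{equation*}
\hat P^{\ast,1/\mu} \hat P = P^{\ast,1/\mu} P - P^{\ast,1/\mu} \mu \1^\t - \mu \1^\t P + \mu \1^\t \mu \1^\t.
\end{equation*}
Using that $\mu$ is invariant for $P^{\ast,1/\mu}$ (Lemma~\ref{lem: time reversal}), that $\1^\t P = \1^\t$ since $P$ is column stochastic, and that $\1^\t \mu = 1$, the three correction terms collapse to a single $\mu \1^\t$, so $\hat P^{\ast,1/\mu}\hat P = P^{\ast,1/\mu}P - \mu \1^\t$. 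By~\eqref{eq: diagonalization}, this is precisely the operator obtained from $P^{\ast,1/\mu}P$ by removing the $\lambda_1 = 1$ term, so it is self-adjoint with spectrum $\{0, \lambda_2, \dots, \lambda_N\}$. Hence $\lVert \hat P \rVert_{1/\mu}^2 = \lambda_2$.

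For the equivalence with irreducibility of $P^\t P$, I would first show that $P^{\ast,1/\mu}P$ and $P^\t P$ have exactly the same zero pattern. Using the formula $P^{\ast,1/\mu} = \diag(\mu) P^\t \diag(1/\mu)$, a direct index computation gives $(P^{\ast,1/\mu} P)_{ij} = \sum_k (\mu_i/\mu_k) P_{ki} P_{kj}$, which is positive if and only if $(P^\t P)_{ij}$ is positive (since $\mu > 0$). So $P^\t P$ is irreducible if and only if $P^{\ast,1/\mu} P$ is. Now $P^{\ast,1/\mu}P$ is column stochastic with invariant distribution $\mu > 0$; if it is irreducible, the Perron--Frobenius theorem makes $1$ a simple eigenvalue, and since the matrix is self-adjoint in $\ell^2(1/\mu)$ (hence has real spectrum lying in $[0,1]$), $\lambda_2 < 1$. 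Conversely, if $P^{\ast,1/\mu}P$ is reducible, then because its invariant measure $\mu$ is strictly positive, every state is recurrent, so the state space decomposes into at least two closed recurrent classes, each supporting its own invariant probability; these yield linearly independent $1$-eigenvectors, forcing $\lambda_2 = 1$.

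The only nontrivial step is the last one — ruling out that a reducible $P^{\ast,1/\mu}P$ could still have $1$ as a simple eigenvalue. The argument that $\mu > 0$ forces multiple closed recurrent classes under reducibility, and hence multiplicity $\geq 2$ at the eigenvalue $1$, is the key observation; everything else is bookkeeping with the diagonalization~\eqref{eq: diagonalization} and the identity $(\mu \1^\t)^{\ast,1/\mu} = \mu \1^\t$.
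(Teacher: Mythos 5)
Your proof is correct, and for the error recursion and the norm formula it matches the paper's argument step by step: $\1^\t(\nu^k - \mu) = 0$ gives the recursion, and $\hat P^{\ast,1/\mu}\hat P = P^{\ast,1/\mu}P - \mu\1^\t$ combined with the diagonalization~\eqref{eq: diagonalization} gives $\lVert\hat P\rVert_{1/\mu}^2 = \lambda_2$.

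Where you diverge is the last direction of the equivalence, namely showing that $P^\t P$ reducible forces $\lambda_2 = 1$. The paper's route is purely linear-algebraic: it uses the symmetry of $P^\t P$ to upgrade the block-triangular form of a reducible matrix to a block-\emph{diagonal} form $P^{\ast,1/\mu}P = \diag(P_1, P_2)$, takes stationary vectors $v_1, v_2$ of the blocks, and directly computes $\lVert\hat P(V_1 - V_2)\rVert_{1/\mu} = \lVert V_1 - V_2\rVert_{1/\mu}$ to conclude $\lVert\hat P\rVert_{1/\mu}\geq 1$. You instead argue probabilistically: since $\mu > 0$ is an invariant probability for the finite chain $P^{\ast,1/\mu}P$, every state is positive recurrent, so a reducible chain must split into at least two closed recurrent classes, each carrying its own invariant distribution; these give two linearly independent $1$-eigenvectors. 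Both arguments are sound. Yours leans on the positivity of $\mu$ rather than on symmetry of $P^\t P$, and delivers the geometric multiplicity of the eigenvalue $1$ directly instead of exhibiting a norm-preserving test vector. The paper's version is more self-contained at the level of matrix manipulations; yours is a shade more conceptual and shows explicitly where the hypothesis $\mu > 0$ is doing work.
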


\begin{proof}
  See Appendix~\ref{apx: power method}.
\end{proof}

Note that asymptotic rate of convergence of the power method is
\begin{equation*}
  \lim_{m \rightarrow \infty} \lVert \hat P^m \rVert^{\frac1m} = \rho(\hat P).
\end{equation*}
For irreversible chains, $\rho(\hat P)$ may be significantly less than the contraction constant $\lVert \hat P \rVert_{1/\mu}$ of the power method in the $\ell^2(1/\mu)$-norm. See Section~\ref{subsec: iad one d irreversible} for an example. However, for reversible chains, we have $\rho(\hat P) =\lVert \hat P \rVert_{1/\mu}$, since $\hat P^{\ast, 1/\mu} = \hat P$.

In our convergence analysis of IAD, we assume that $P^\t P$ is irreducible. 
By formula~\eqref{eq: formula for adjoint} for $P^{\ast, 1/\mu}$, it is equivalent to assume that $P^{\ast, 1/\mu} P$ is irreducible. A sufficient, but not necessary, condition is that $P$ be irreducible with positive diagonal. We note that if $P$ is irreducible but $P^\t P$ is not, then $\bar P = \frac12(I+P)$ is irreducible, has a positive diagonal, and has the same unique steady-state as $P$. Therefore, to compute the steady-state of $P$ one could apply IAD with $\bar P$ in place of $P$, and local convergence would then be guaranteed by the results below.

We now give an example to illustrate what can go wrong when $P^\t P$ is reducible. Consider a right shift on three states:
\begin{equation*}
  P =
  \begin{pmatrix}
    0 & 0 &1 \\
    1 & 0 &0 \\
    0 &1 &0
  \end{pmatrix}.
\end{equation*}
This is an irreducible Markov chain, and the steady-state is the uniform distribution $\frac13 \1 \in \Real^3$. The time-reversal is the left shift $P^{\ast,\frac13\1} = P^\t = P^{-1}$.
Therefore, $P^{\ast,\frac13\1} P  = I$ is reducible even though both $P$ and $P^{\ast,\frac13\1}$ are irreducible. In this case, $\lVert \hat P \rVert_{1/\mu} = 1$, and the power method does not converge, since $P$ is periodic.

The right shift in our last example is irreducible, but periodic. There also exist irreducible, aperiodic chains so that $P^\t P$ is reducible. For such chains, the power method is convergent, but it is not a strict contraction in the $\ell^2(1/\mu)$-norm. See Appendix~\ref{apx: pathological examples} and~\cite{marek_note_2006} for an example of an irreducible, aperiodic chain so that $P^\t P$ is reducible and IAD is not locally convergent.

\subsection{Local Convergence of IAD}
\label{subsec: local convergence}
Here, we prove local convergence of IAD. We begin with a convenient reformulation of the steady-state problem for a Markov chain.

\begin{lemma}\label{lem: inverse formula for invariant distribution}
  Let $M \in \Real^{k\times k}$ be an irreducible column stochastic matrix, and let $v,w \in \Real^k$ with $\1^\t v \neq 0$ and $z(M)^t w\neq 0$. The matrix $I - M + v w^\t$ is invertible, and
  \begin{equation*}\label{eqn: inverse formula for invariant distribution}
    z(M) = (I - M + v w^\t)^{-1} v w^\t z(M).
  \end{equation*}
\end{lemma}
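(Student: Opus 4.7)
The plan is to note first that the identity is an immediate consequence of invertibility: since $(I - M)z(M) = 0$ by definition of $z(M)$, we have
\begin{equation*}
  (I - M + vw^\t)z(M) = vw^\t z(M),
\end{equation*}
and multiplying on the left by $(I - M + vw^\t)^{-1}$ yields the claimed formula. So the entire content of the lemma is the invertibility of $I - M + vw^\t$, which I would prove by showing its kernel is trivial.

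Suppose $x \in \Real^k$ satisfies $(I - M + vw^\t)x = 0$, and set $\alpha = w^\t x \in \Real$. Then $(I - M)x = -\alpha v$. The key first step is to apply $\1^\t$ on the left: because $M$ is column stochastic, $\1^\t M = \1^\t$, hence $\1^\t(I-M) = 0$, which forces $-\alpha (\1^\t v) = 0$. By the hypothesis $\1^\t v \neq 0$, we conclude $\alpha = 0$, and therefore $(I - M)x = 0$.

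At this point I would invoke the standard Perron--Frobenius consequence that for an irreducible column stochastic matrix $M$, the kernel of $I - M$ is one-dimensional and spanned by $z(M)$. Thus $x = \beta z(M)$ for some scalar $\beta$. Evaluating $\alpha = w^\t x = \beta \, w^\t z(M) = 0$ and using the hypothesis $z(M)^\t w \neq 0$ gives $\beta = 0$, so $x = 0$. This establishes invertibility and completes the proof.

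There is no real obstacle here; the argument is short and linear-algebraic. The only subtlety worth flagging is the use of the Perron--Frobenius fact that $\ker(I-M)$ is one-dimensional for an irreducible stochastic matrix, which is standard and could simply be cited. The two hypotheses $\1^\t v \neq 0$ and $z(M)^\t w \neq 0$ are each used exactly once, and the proof makes clear why both are needed: the first rules out nontrivial contributions aligned with $v$ after projecting by $\1^\t$, and the second rules out the Perron direction itself.
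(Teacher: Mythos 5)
Your proof is correct and uses essentially the same approach as the paper: left-multiply by $\1^\t$ to exploit column stochasticity, use $\1^\t v \neq 0$ to kill the $vw^\t$ contribution, and then use $w^\t z(M) \neq 0$ to rule out the Perron direction. The only structural difference is cosmetic: you show $\ker(I - M + vw^\t) = \{0\}$ directly by citing that $\ker(I - M)$ is one-dimensional (a standard Perron--Frobenius consequence), whereas the paper frames the argument as uniqueness of solutions to $(I - M + vw^\t)x = vw^\t z(M)$ and, rather than quoting the dimension of the kernel, derives a contradiction by perturbing $z(M)$ to $z(M) + \eps u > 0$ and violating uniqueness of the stationary distribution. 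Your version is a touch cleaner; the paper's avoids explicitly naming the one-dimensional-kernel fact at the cost of an extra positivity step. Both are sound.
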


\begin{proof}
  See Appendix~\ref{apx: inverse formula}.
\end{proof}

We use two special cases of Lemma~\ref{lem: inverse formula for invariant distribution}. First, the steady-state $\mu \in \Real^N$ is the unique solution $x$ of
\begin{equation}\label{eq: linear formulation of fine problem}
  (I - P + \mu \1^\t) x = \mu.
\end{equation}
Second, the coarse steady-state $z(C(\mu^k)) \in \Real^n$ is the unique solution of
\begin{equation}\label{eq: linear formulation of coarse problem}
  (I - C(\mu_k) + A\mu \1^\t ) x = A\mu. 
\end{equation}
Note that one cannot solve the linear equations~\eqref{eq: linear formulation of fine problem} and~\eqref{eq: linear formulation of coarse problem} in practice to compute $\mu$ and $z(C(\mu^k))$, since both the matrices and the right-hand-sides depend on the unknown $\mu$. We use~\eqref{eq: linear formulation of fine problem} and~\eqref{eq: linear formulation of coarse problem} only to derive the following recursive formula for the error after the coarse correction. 

\begin{lemma}
  \label{lem: error propagation for coarse correction step}
  For any probability vector $\nu \in \Real^N$ with $\nu >0$, the matrix ${A (I - P + \mu \1^\t)  D(\nu)}$ is invertible, and we may define 
   \begin{equation*}
     S(\nu) :=  D(\nu)[A (I - P + \mu \1^\t)  D(\nu)]^{-1} A (I - P + \mu \1^\t).
   \end{equation*}
   The operator $S(\nu)$ is a projection with $\rg (S(\nu)) = \rg(D(\nu))$. We call $S(\nu)$ the \emph{coarse projection}.
   We have
   \begin{equation}
     \label{eq: error after coarse step}
     \mu^{k+\frac12} - \mu = (I - S(\mu^k)) (\mu^k - \mu).
   \end{equation}
\end{lemma}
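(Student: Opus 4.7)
The plan is to first put $M(\nu) := A(I-P+\mu\1^\t)D(\nu)$ into a form where the previous inverse-formula lemma applies. Two direct computations from the definitions handle the bookkeeping: $AD(\nu) = I$ on $\Real^n$ (summing the conditional $\nu(\cdot\vert S_i)$ over $S_i$ gives $1$), and $\1^\t D(\nu) = \1^\t$ (since $D(\nu)$ preserves total mass). Combined with $C(\nu) = APD(\nu)$, these give
\begin{equation*}
M(\nu) = AD(\nu) - APD(\nu) + A\mu\,\1^\t D(\nu) = I - C(\nu) + A\mu\,\1^\t.
\end{equation*}
Lemma~\ref{lem: inverse formula for invariant distribution} applied with matrix $C(\nu)$ and vectors $v = A\mu$, $w = \1$ then yields invertibility once one notes $\1^\t A\mu = \1^\t \mu = 1$ and $z(C(\nu))^\t \1 = 1$, and moreover gives the useful identity
\begin{equation*}
z(C(\nu)) = M(\nu)^{-1} A\mu.
\end{equation*}

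Next I would establish the projection property by pure algebra. Writing $S(\nu) = D(\nu)M(\nu)^{-1} A(I-P+\mu\1^\t)$, the middle block $A(I-P+\mu\1^\t)D(\nu) = M(\nu)$ collapses with $M(\nu)^{-1}$ so that $S(\nu)^2 = S(\nu)$. The inclusion $\rg(S(\nu)) \subset \rg(D(\nu))$ is immediate from the leftmost factor. For the reverse inclusion, the same cancellation gives $S(\nu)D(\nu) = D(\nu)$, so every element of $\rg(D(\nu))$ is fixed by $S(\nu)$, whence $\rg(S(\nu)) = \rg(D(\nu))$.

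The error identity then follows from two one-line computations using the fact that $S(\mu^k)$ is the projection onto $\rg(D(\mu^k))$. First, a direct expansion shows $D(\nu)A\nu = \nu$ for any positive probability vector $\nu$, because $\nu_j = (A\nu)_i\, \nu(j\vert S_i)$ whenever $j \in S_i$. Thus $\mu^k \in \rg(D(\mu^k))$ and so $S(\mu^k)\mu^k = \mu^k$. Second, since $(I-P)\mu = 0$ and $\1^\t \mu = 1$,
\begin{equation*}
S(\mu^k)\mu = D(\mu^k) M(\mu^k)^{-1} A\mu = D(\mu^k) z(C(\mu^k)) = \mu^{k+\frac12}.
\end{equation*}
Subtracting these two identities gives $S(\mu^k)(\mu^k - \mu) = \mu^k - \mu^{k+\frac12}$, which rearranges to $(I - S(\mu^k))(\mu^k - \mu) = \mu^{k+\frac12} - \mu$, as required.

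I do not expect any step to pose a real obstacle: every ingredient is either a definition or the previously established inverse-formula lemma. The one place calling for a little care is making sure the two algebraic identities $AD(\nu) = I$ and $D(\nu)A\nu = \nu$ are recorded cleanly, since they are what tie the abstract projection $S(\nu)$ to the concrete coarse-correction step $\mu^{k+\frac12} = D(\mu^k) z(C(\mu^k))$.
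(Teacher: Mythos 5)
Your proof is correct and essentially matches the paper's: rewrite the matrix as $I - C(\nu) + A\mu\1^\t$ via $AD(\nu)=I$ and $\1^\t D(\nu)=\1^\t$, invoke Lemma~\ref{lem: inverse formula for invariant distribution} for invertibility (one should also cite Lemma~\ref{lem: well-posedness} for the irreducibility of $C(\nu)$, which you leave implicit), cancel the middle block to get $S(\nu)^2 = S(\nu)$, and derive the error identity by evaluating $S(\mu^k)$ on $\mu$ and on $\mu^k$ and subtracting. Your route to $\rg(S(\nu)) = \rg(D(\nu))$ via the identity $S(\nu)D(\nu) = D(\nu)$ is a slightly more direct variant of the paper's argument, which instead counts ranges using the invertibility of $I-P+\mu\1^\t$; both are fine.
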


\begin{proof}
See Appendix~\ref{apx: error propagation for coarse correction}.
\end{proof}

In Appendix~\ref{apx: iad as multigrid}, we interpret IAD as an adaptive algebraic multigrid method for solving~\eqref{eq: linear formulation of fine problem}. The coarse projection $S(\mu^k)$ is exactly the coarse grid correction in this interpretation. The restriction operator is $A$, and the prolongation operator is $D(\mu^k)$. Note that $S(\nu)$ is a projection on $\rg(D(\nu))$. This is the most important algebraic fact in our analysis below. All of our results below would hold if $S(\mu)$ were \emph{any} projection on $\rg(D(\mu))$, except for Theorem~\ref{thm: exact spectrum of J}. 

To derive a recursive formula for the error after a complete step of IAD, we simply compose formula~\eqref{eq: error after coarse step} for the error after the coarse correction with formula~\eqref{eqn: power method error} for the propagation of the error under the power method. 

\begin{lemma}
  \label{lem: error propagation formula}
 Define the \emph{error propagation operator}
  \begin{equation*}
    J(\mu^k) := \hat P (I - S(\mu^k)).
  \end{equation*}
  We have
  \begin{equation*}
    \mu^{k+1} - \mu = J(\mu^k) (\mu^{k} - \mu).
  \end{equation*}
\end{lemma}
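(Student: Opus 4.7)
The plan is to simply compose the two error-propagation formulas already established: Lemma~\ref{lem: error propagation for coarse correction step} for the coarse correction step, and equation~\eqref{eqn: power method error} from Lemma~\ref{lem: convergence of power method} for the smoothing step. There is no real obstacle here; the work has been front-loaded into the preceding lemmas, and what remains is a one-line chain of substitutions.

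Concretely, I would proceed as follows. First, recall from~\eqref{eq: smoothing step} that $\mu^{k+1} = P \mu^{k+\frac12}$. Subtract $\mu = P\mu$ from both sides and use the fact that $(\mu\1^\t)(\mu^{k+\frac12}-\mu) = \mu(\1^\t \mu^{k+\frac12} - \1^\t \mu) = 0$, since both $\mu^{k+\frac12}$ and $\mu$ are probability vectors; this gives
\begin{equation*}
\mu^{k+1} - \mu = P(\mu^{k+\frac12} - \mu) = (P - \mu\1^\t)(\mu^{k+\frac12}-\mu) = \hat P(\mu^{k+\frac12}-\mu),
\end{equation*}
which is exactly~\eqref{eqn: power method error} applied to the post-correction iterate. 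Second, invoke Lemma~\ref{lem: error propagation for coarse correction step} to replace $\mu^{k+\frac12} - \mu$ with $(I - S(\mu^k))(\mu^k - \mu)$. Composing,
\begin{equation*}
\mu^{k+1} - \mu = \hat P (I - S(\mu^k))(\mu^k - \mu) = J(\mu^k)(\mu^k-\mu),
\end{equation*}
which is the claim.

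The only subtlety worth flagging is that the application of~\eqref{eqn: power method error} to $\mu^{k+\frac12}$ requires $\mu^{k+\frac12}$ to be a probability vector; this follows because $D(\mu^k)$ maps probability vectors to probability vectors and $z(C(\mu^k))$ is a probability vector by definition, both of which are guaranteed by the well-posedness Lemma~\ref{lem: well-posedness} under Assumption~\ref{asm: main assumptions}. Once this point is noted, the proof is immediate.
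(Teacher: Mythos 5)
Your proof is correct and follows the same route the paper takes: apply~\eqref{eqn: power method error} to the smoothing step and then substitute~\eqref{eq: error after coarse step} for the coarse-correction error. The extra detail you supply (why the $\mu\1^\t$ term vanishes, and the well-posedness check for $\mu^{k+\frac12}$) is harmless elaboration of what the paper leaves implicit.
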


\begin{proof}
  By~\eqref{eqn: power method error} and~\eqref{eq: error after coarse step}, we have
  \begin{align*}
    \mu^{k+1} - \mu &= \hat P ( \mu^{k+\frac12} - \mu) 
    = \hat P (I - S(\mu^k)) (\mu^k - \mu).
  \end{align*}
 Similar formulas for the propagation of the error appear in~\cite{mandel_local_1983} and subsequent work~\cite{marek_convergence_2003,marek_convergence_1998,marek_local_1994}.
\end{proof}

We now show that $J(\mu)$ has norm less than one with respect to a certain operator norm. Local convergence follows. To construct the right norm, we decompose $\Real^N$ into $\rg(D(\mu))$ and its orthogonal complement in $\ell^2(1/\mu)$. The orthogonal projection $\Pi(\mu)$ defined below will be useful.

\begin{definition}
  Given a probability vector $\nu \in \Real^N$ with $\nu >0$, we define the \emph{orthogonal coarse projection}
  \begin{equation*}
    \Pi(\nu) := D(\nu) A.
  \end{equation*}
\end{definition}

Lemma~\ref{lem: properties of multigrid projection} summarizes the properties of $\Pi(\nu)$.

\begin{lemma}
  \label{lem: properties of multigrid projection}
  Let $\nu \in \Real^N$ be a positive probability vector. The orthogonal coarse projection $\Pi(\nu)$ is the orthogonal projection on $\rg(D(\nu))$ with respect to $\langle, \rangle_{1/\nu}$. It is also a reversible, column stochastic matrix with invariant distribution $\nu$. 
\end{lemma}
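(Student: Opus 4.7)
The plan is to verify all four claims—projection onto $\rg(D(\nu))$, orthogonality in $\langle,\rangle_{1/\nu}$, column stochasticity, and reversibility with respect to $\nu$—by direct calculation from the definitions. The central algebraic fact that drives everything is the identity $A D(\nu) = I_n$ on the coarse space. This follows immediately: for $z \in \Real^n$, $(A D(\nu) z)_i = \sum_{j \in S_i} \sum_k z_k \nu_j \1_{S_k}(j)/(A\nu)_k = z_i \sum_{j \in S_i} \nu_j /(A\nu)_i = z_i$, using that $\{S_k\}$ partitions $\Omega$ and $(A\nu)_i = \sum_{j \in S_i}\nu_j$. The hypothesis $A\nu>0$ from Lemma~\ref{lem: well-posedness} ensures the divisions are legitimate.

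From $AD(\nu)=I_n$ I immediately get $\Pi(\nu)^2 = D(\nu)(AD(\nu))A = D(\nu)A = \Pi(\nu)$, so $\Pi(\nu)$ is a projection. The inclusion $\rg(\Pi(\nu)) \subset \rg(D(\nu))$ is automatic, and for any $y = D(\nu)z$ we have $\Pi(\nu) y = D(\nu)(AD(\nu))z = y$, giving the reverse inclusion. To identify $\Pi(\nu)$ as the \emph{orthogonal} projection with respect to $\langle, \rangle_{1/\nu}$, I will show it is self-adjoint in that inner product. A short calculation shows
\begin{equation*}
 (\Pi(\nu) x)_j = \frac{\nu_j}{(A\nu)_{i(j)}} (Ax)_{i(j)},
\end{equation*}
where $i(j)$ is the unique index with $j \in S_{i(j)}$, and hence
\begin{equation*}
 \langle \Pi(\nu) x, y \rangle_{1/\nu} = \sum_{i=1}^n \frac{(Ax)_i (Ay)_i}{(A\nu)_i},
\end{equation*}
which is symmetric in $x$ and $y$. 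Combined with $\Pi(\nu)^2 = \Pi(\nu)$, this gives the orthogonal projection claim.

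The remaining structural claims are read off from the explicit entries $\Pi(\nu)_{jk} = \nu_j/(A\nu)_{i(j)}$ when $j,k$ lie in the common coarse state $S_{i(j)}$, and $\Pi(\nu)_{jk}=0$ otherwise. Summing column $k$ over $j \in S_{i(k)}$ yields $1$, so $\Pi(\nu)$ is column stochastic. Applying $\Pi(\nu)$ to $\nu$ term-by-term gives $(\Pi(\nu)\nu)_j = (\nu_j/(A\nu)_{i(j)})(A\nu)_{i(j)} = \nu_j$, so $\nu$ is invariant. Finally, reversibility is most cleanly obtained from the self-adjointness established above: by Lemma~\ref{lem: time reversal} the time reversal of $\Pi(\nu)$ is $\Pi(\nu)^{\ast, 1/\nu}$, and we have just shown this equals $\Pi(\nu)$. (Equivalently, detailed balance $\Pi(\nu)_{jk}\nu_k = \nu_j \nu_k/(A\nu)_{i(j)} = \Pi(\nu)_{kj}\nu_j$ is visible at the level of entries.)

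There is no real obstacle here; the lemma is a bookkeeping consequence of the definitions. The only care needed is in matching indices through the two compositions $D(\nu)A$ and $AD(\nu)$, and in verifying the symmetry $\langle \Pi(\nu)x, y\rangle_{1/\nu}$ passes through the aggregation operator to produce an expression depending only on $Ax$ and $Ay$.
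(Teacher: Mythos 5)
Your proof is correct and follows essentially the same approach as the paper's: both hinge on the key identity $AD(\nu)=I$ to get idempotence, establish orthogonality by showing $\Pi(\nu)$ is self-adjoint in $\langle,\rangle_{1/\nu}$, and deduce reversibility from that self-adjointness. The only presentational difference is that the paper factors the self-adjointness computation into a separate lemma stating that $A$ and $D(\nu)$ are adjoints between $\ell^2(1/\nu)$ and $\ell^2(1/A\nu)$, whereas you derive the symmetric expression $\sum_i (Ax)_i(Ay)_i/(A\nu)_i$ inline and verify column stochasticity by explicit entry-level summation rather than by the paper's abstract observation that $A$ and $D(\nu)$ each preserve probability vectors.
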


\begin{proof}
For completeness, we give a proof in Appendix~\ref{apx: properties of multigrid projection}. Equivalent observations appear in~\cite{mandel_local_1983} and subsequent work~\cite{marek_convergence_2003,marek_convergence_1998,marek_local_1994}. 
\end{proof}

We will show that for the $\lVert \cdot \rVert_{\eps}$ norm defined below, $\lVert J(\mu) \rVert_\eps < 1$ for sufficiently small $\eps$.
\begin{definition}
  For $\eps >0$, we define the $\eps$-inner product and norm on $\Real^N$ by
  \begin{equation*}
    \langle x,y \rangle_\eps := \langle x,(I-\Pi(\mu)) y \rangle_{1/\mu} + \eps \langle x,\Pi(\mu) y \rangle_{1/\mu} \text{ and } \lVert x \rVert_\eps := \langle x,x \rangle_\eps^\frac12.
  \end{equation*}
For $M \in \Real^{N \times N}$, we let $\lVert M \rVert_\eps$ denote the induced operator norm.
\end{definition}

To verify that $\langle , \rangle_\eps$ is an inner product, observe that it is symmetric since $\Pi(\mu)$ is an orthogonal projection, hence $\Pi(\mu) = \Pi(\mu)^{\ast, 1/\mu}$.  It is nondegenerate since 
\begin{equation*}
  \begin{split}
    \lVert x \rVert_\eps^2 &\geq \min \{ 1, \eps \} \left \{ \lVert (I - \Pi(\mu) ) x \rVert_{1/\mu}^2 + \lVert \Pi (\mu) x \rVert_{1/\mu}^2 \right \}\\
    &= \min \{ 1, \eps \} \lVert x \rVert_{1/\mu}^2,
  \end{split}
\end{equation*}
using again that $\Pi(\mu)$ is an orthogonal projection. Bilinearity is inherited from $\langle, \rangle_{1/\mu}$.

We now show that when $\eps>0$ is small, $\lVert J(\mu) \rVert_\eps$ is approximately $\lVert (I- \Pi(\mu)) J(\mu) \rVert_{1/\mu}$.

\begin{lemma}\label{lem: eps norm of J}
  We have
  \begin{equation*}
    \lVert J(\mu) \rVert_\eps \leq \sqrt{\lVert (I - \Pi(\mu)) J(\mu) \rVert_{1/\mu}^2 + \eps \lVert \Pi(\mu) - S(\mu) \rVert_{1/\mu}^2}.
  \end{equation*}
\end{lemma}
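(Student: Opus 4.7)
The plan is to exploit the orthogonal decomposition $\Real^N = \rg(\Pi(\mu)) \oplus \ker(\Pi(\mu))$ that underlies the $\eps$-norm. Because $S(\mu)$ and $\Pi(\mu)$ are both projections with range $\rg(D(\mu))$, they satisfy $S(\mu)\Pi(\mu) = \Pi(\mu)$ and $\Pi(\mu) S(\mu) = S(\mu)$. The first of these gives the key relation
\[
  J(\mu)\Pi(\mu) = \hat{P}(\Pi(\mu) - S(\mu)\Pi(\mu)) = 0,
\]
so $J(\mu) = J(\mu)(I - \Pi(\mu))$; in particular, $J(\mu)x$ depends only on the $(I - \Pi(\mu))$-component of $x$.

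Using this factorization, I would bound $\lVert (I - \Pi(\mu))J(\mu)x\rVert_{1/\mu} \leq \lVert (I - \Pi(\mu))J(\mu)\rVert_{1/\mu}\lVert (I - \Pi(\mu))x\rVert_{1/\mu}$ and, similarly, $\lVert \Pi(\mu)J(\mu)x\rVert_{1/\mu} \leq \lVert \Pi(\mu)J(\mu)\rVert_{1/\mu}\lVert (I - \Pi(\mu))x\rVert_{1/\mu}$. Substituting these into $\lVert J(\mu)x\rVert_\eps^2 = \lVert (I-\Pi(\mu))J(\mu)x\rVert_{1/\mu}^2 + \eps\lVert \Pi(\mu)J(\mu)x\rVert_{1/\mu}^2$ and using $\lVert (I - \Pi(\mu))x\rVert_{1/\mu} \leq \lVert x\rVert_\eps$ yields the preliminary estimate
\[
  \lVert J(\mu)\rVert_\eps^2 \leq \lVert (I - \Pi(\mu))J(\mu)\rVert_{1/\mu}^2 + \eps\lVert \Pi(\mu)J(\mu)\rVert_{1/\mu}^2.
\]

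The main obstacle is to upgrade this to the stated bound by proving the identity $\Pi(\mu)J(\mu) = \Pi(\mu) - S(\mu)$. My plan is to use the formula for $S(\mu)$ given in Lemma~\ref{lem: error propagation for coarse correction step}, which immediately implies $A(I - P + \mu\1^\t)S(\mu) = A(I - P + \mu\1^\t)$, and hence $A(I - P + \mu\1^\t)(I - S(\mu)) = 0$. A brief calculation using $\1^\t A = \1^\t$ and $\1^\t D(\mu) = \1^\t$ also shows that $\1^\t S(\mu) = \1^\t$, so $\mu\1^\t(I - S(\mu)) = 0$, and the previous identity reduces to $AP(I - S(\mu)) = A(I - S(\mu))$. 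Multiplying on the left by $D(\mu)$ and applying $\Pi(\mu)S(\mu) = S(\mu)$ gives $\Pi(\mu)P(I - S(\mu)) = \Pi(\mu) - S(\mu)$. Finally, since $\Pi(\mu)\mu = \mu$ we have $\Pi(\mu)\hat{P} = \Pi(\mu)P - \mu\1^\t$, and combining this with $\mu\1^\t(I - S(\mu)) = 0$ gives $\Pi(\mu)J(\mu) = \Pi(\mu)P(I - S(\mu)) = \Pi(\mu) - S(\mu)$, as required.
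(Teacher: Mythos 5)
Your proof is correct, and the main norm estimate is essentially identical to the paper's: you use $J(\mu) = J(\mu)(I-\Pi(\mu))$, split $\lVert J(\mu)x\rVert_\eps^2$ into its $(I-\Pi(\mu))$- and $\Pi(\mu)$-components, bound each factor by the induced operator norm, and then use $\lVert (I-\Pi(\mu))x\rVert_{1/\mu} \leq \lVert x\rVert_\eps$. The paper packages exactly this chain of inequalities.

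Where you differ from the paper is in the derivation of the identity $\Pi(\mu)J(\mu) = \Pi(\mu) - S(\mu)$, which the paper isolates as a separate block-decomposition lemma. The paper's argument starts from $\Pi\hat P S$ and directly simplifies the explicit formula for $S(\mu)$, using the resolvent identity $A\hat P D(I - A\hat P D)^{-1} = (I - A\hat P D)^{-1} - I$ together with $AD = I$. You instead extract two Galerkin-type consequences from the structure of $S(\mu)$---namely $A(I - P + \mu\1^\t)(I - S(\mu)) = 0$ and $\1^\t(I-S(\mu))=0$---and then recover $\Pi(\mu)J(\mu) = \Pi(\mu) - S(\mu)$ by left-multiplying by $D(\mu)$ and peeling off the rank-one term $\mu\1^\t$. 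Both derivations are correct and of comparable length; yours arguably makes the role of $S(\mu)$ as an oblique (Petrov--Galerkin) projection more transparent, since the identity $A(I - P + \mu\1^\t)(I-S(\mu))=0$ expresses exactly the coarse residual equation that defines $S(\mu)$, while the paper's is a more direct but less conceptual computation.
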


\begin{proof}
 See Appendix~\ref{apx: proof of lemma on eps norm of J}.
\end{proof}

We will estimate $\lVert (I - \Pi(\mu)) J(\mu) \rVert_{1/\mu}$. By Lemma~\ref{lem: eps norm of J}, if $\lVert (I - \Pi(\mu)) J(\mu) \rVert_{1/\mu} <1$, then $\lVert J(\mu) \rVert_\eps <1$ for $\eps$ sufficiently small. 

\begin{theorem}\label{lem: first estimate of norm of projection of J onto conditionals}
  Assume that $P$ and $P^\t P$ are irreducible and that at least one coarse state contains more than one fine state.  We have  
  \begin{align}
    \lVert (I - \Pi(\mu)) J(\mu) \rVert_{1/\mu}^2
    &= 1
      - \inf_{z \in \rg (I-S(\mu))}
      \frac{\langle z , (I - \hat P^{\ast,1/\mu } \hat P) z \rangle_{1/\mu}}{\lVert (I - \Pi(\mu) )z \rVert_{1/\mu}^2}  \label{eq: for showing refinement can only help} \\
    &\leq 1 - \frac{1}{\lVert (I - \Pi(\mu)) (I - \hat P^{\ast,1/\mu } \hat P)^{-1}  (I - \Pi(\mu)) \rVert_{1/\mu}} \label{eq: upper bound on i minus pi times J} \\
    &< 1. \nonumber
  \end{align}
\end{theorem}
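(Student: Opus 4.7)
The plan is to rewrite $\lVert (I-\Pi(\mu)) J(\mu) \rVert_{1/\mu}^{2}$ as a supremum of Rayleigh-type quotients over $z \in \rg(I-S(\mu))$, identify it with the advertised formula via a key algebraic identity, and then lower-bound the resulting quotient using Cauchy--Schwarz. To begin, I would observe that $I - S(\mu)$ is a projection with kernel $\rg(S(\mu)) = \rg(D(\mu)) = \rg(\Pi(\mu))$, so the preimages of any $z \in \rg(I-S(\mu))$ form the affine set $z + \rg(D(\mu))$. Since $\Pi(\mu)$ is the \emph{orthogonal} projection onto $\rg(D(\mu))$ in $\ell^{2}(1/\mu)$ (Lemma~\ref{lem: properties of multigrid projection}), the preimage of minimum $\ell^{2}(1/\mu)$-norm is $(I-\Pi(\mu)) z$. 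Because $(I-\Pi(\mu)) \hat P (I-S(\mu)) x$ depends on $x$ only through $z = (I-S(\mu)) x$, this yields
\begin{equation*}
\lVert (I-\Pi(\mu)) J(\mu) \rVert_{1/\mu}^{2} = \sup_{\substack{z \in \rg(I-S(\mu)) \\ z \neq 0}} \frac{\lVert (I-\Pi(\mu)) \hat P z \rVert_{1/\mu}^{2}}{\lVert (I-\Pi(\mu)) z \rVert_{1/\mu}^{2}} ,
\end{equation*}
where the denominator is strictly positive since $\rg(I-S(\mu)) \cap \rg(\Pi(\mu)) = \{0\}$.

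Next, I would establish the identity $\Pi(\mu) \hat P z = \Pi(\mu) z$ for $z \in \rg(I-S(\mu))$. The explicit form of $S(\mu)$ in Lemma~\ref{lem: error propagation for coarse correction step} shows that $S(\mu) z = 0$ is equivalent to $A(I - P + \mu \1^{\t}) z = 0$, which rearranges to $A \hat P z = A z$; applying $D(\mu)$ on the left and using $\Pi(\mu) = D(\mu) A$ yields the identity. Combining it with the Pythagorean relation $\lVert (I-\Pi(\mu)) y \rVert_{1/\mu}^{2} = \lVert y \rVert_{1/\mu}^{2} - \lVert \Pi(\mu) y \rVert_{1/\mu}^{2}$ for the orthogonal projection $\Pi(\mu)$ produces
\begin{equation*}
\frac{\lVert (I-\Pi(\mu)) \hat P z \rVert_{1/\mu}^{2}}{\lVert (I-\Pi(\mu)) z \rVert_{1/\mu}^{2}} = 1 - \frac{\lVert z \rVert_{1/\mu}^{2} - \lVert \hat P z \rVert_{1/\mu}^{2}}{\lVert (I-\Pi(\mu)) z \rVert_{1/\mu}^{2}} = 1 - \frac{\langle z, (I - \hat P^{\ast,1/\mu} \hat P) z \rangle_{1/\mu}}{\lVert (I-\Pi(\mu)) z \rVert_{1/\mu}^{2}} ,
\end{equation*}
and taking the supremum (which becomes $1$ minus the infimum) gives the first equality of the theorem.

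For the upper bound, set $T := I - \hat P^{\ast,1/\mu} \hat P$. Lemma~\ref{lem: convergence of power method} gives $\lVert \hat P \rVert_{1/\mu}^{2} = \lambda_{2} < 1$ when $P^{\t} P$ is irreducible, so $T$ is self-adjoint, positive definite, and invertible. Cauchy--Schwarz applied to $T^{1/2} z$ and $T^{-1/2} (I-\Pi(\mu)) z$ yields
\begin{equation*}
\lVert (I-\Pi(\mu)) z \rVert_{1/\mu}^{4} = \langle z, (I-\Pi(\mu)) z \rangle_{1/\mu}^{2} \leq \langle z, T z \rangle_{1/\mu} \, \langle (I-\Pi(\mu)) z, T^{-1} (I-\Pi(\mu)) z \rangle_{1/\mu} ,
\end{equation*}
and the self-adjointness and idempotence of $I - \Pi(\mu)$ let one rewrite the right factor as $\langle w, (I-\Pi(\mu)) T^{-1} (I-\Pi(\mu)) w \rangle_{1/\mu}$ with $w = (I-\Pi(\mu)) z$, bounded above by $\lVert (I-\Pi(\mu)) T^{-1} (I-\Pi(\mu)) \rVert_{1/\mu} \lVert w \rVert_{1/\mu}^{2}$. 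Rearranging produces the claimed lower bound on the infimum. The strict inequality $< 1$ holds because $(I-\Pi(\mu)) T^{-1} (I-\Pi(\mu))$ is a nonzero self-adjoint positive semidefinite operator: $T^{-1}$ is invertible and $I-\Pi(\mu) \neq 0$ since some coarse state contains more than one fine state, so its operator norm is positive.

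The main obstacle is step two: the identity $\Pi(\mu) \hat P z = \Pi(\mu) z$ on $\rg(I-S(\mu))$ is the only place where the specific form of the coarse projection $S(\mu)$ from Lemma~\ref{lem: error propagation for coarse correction step} matters (beyond its being a projection onto $\rg(D(\mu))$), and it is precisely what connects the coarse correction to the spectrum of $\hat P^{\ast,1/\mu} \hat P$. Everything else is an algebraic rearrangement or a direct Cauchy--Schwarz estimate.
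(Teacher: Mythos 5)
Your proof is correct, and its overall skeleton mirrors the paper's: reduce $\lVert(I-\Pi)J\rVert_{1/\mu}^2$ to a Rayleigh quotient over $\rg(I-S)$ involving $I-\hat P^{\ast,1/\mu}\hat P$, then bound that quotient from below. The technical steps differ in two places worth noting. For the equality, you extract the key identity $\Pi\hat P z=\Pi z$ on $\ker S$ directly from the explicit form of $S$ (and obtain the Rayleigh quotient by taking minimum-norm preimages of $I-S$), whereas the paper first establishes $J=J(I-\Pi)$ and $\Pi J=\Pi-S$ as block-decomposition lemmas, proves the Lyapunov-type identity $\lVert(I-\Pi)x\rVert^2-\lVert(I-\Pi)Jx\rVert^2=\langle(I-S)x,(I-\hat P^{\ast}\hat P)(I-S)x\rangle$ for all $x$, and then performs the change of variable $z=(I-S)x$ restricted to $\rg(I-\Pi)$; your route is a bit more economical but the paper's Lyapunov identity is useful to have stated for all $x$. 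For the upper bound, you apply Cauchy--Schwarz to $T^{1/2}z$ and $T^{-1/2}(I-\Pi)z$ (with $T=I-\hat P^{\ast}\hat P$) to lower-bound the infimand pointwise, while the paper instead substitutes $w=T^{1/2}z$ into the variational problem and then uses the norm identity $\lVert(I-\Pi)T^{-1/2}\rVert^2=\lVert(I-\Pi)T^{-1}(I-\Pi)\rVert$; these are two standard realizations of the same estimate. In both cases your argument is sound and complete, including the strict inequality via nonvanishing of $(I-\Pi)T^{-1}(I-\Pi)$.
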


\begin{proof}
See Appendix~\ref{apx: proof of estimate of norm of projection of J}.
\end{proof}


We now prove local convergence.

\begin{theorem}
  \label{thm: local convergence}
  Assume that $P$ and $P^\t P$ are irreducible.
  For $\eps >0$ sufficiently small,
  \begin{equation}
    \label{eqn: bound on J in the eps norm}
    \lVert J(\mu) \rVert_\eps < 1.
  \end{equation}
  For any $\eps >0$ small enough that $ \lVert J(\mu) \rVert_\eps < 1$ and any $\eta \in (0, 1- \lVert J(\mu) \rVert_\eps]$, there exists $r >0$ so that if $\lVert \mu^0 - \mu \rVert_\eps \leq r$, then
  \begin{equation*}
    \lVert \mu^k - \mu \rVert_{\eps} \leq (\lVert J(\mu) \rVert_{\eps} + \eta)^k \lVert \mu^0 - \mu \rVert_{\eps}
  \end{equation*}  
  for all $k \in \mathbb{N}$.
\end{theorem}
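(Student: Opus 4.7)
The first inequality $\lVert J(\mu) \rVert_\eps < 1$ for small $\eps$ follows immediately from Lemma~\ref{lem: eps norm of J} and Theorem~\ref{lem: first estimate of norm of projection of J onto conditionals}. The latter gives $\lVert (I - \Pi(\mu)) J(\mu) \rVert_{1/\mu}^2 < 1$, while the former bounds $\lVert J(\mu) \rVert_\eps^2$ by this quantity plus $\eps \lVert \Pi(\mu) - S(\mu) \rVert_{1/\mu}^2$; for $\eps$ small enough this sum is strictly less than $1$. The hypothesis in Theorem~\ref{lem: first estimate of norm of projection of J onto conditionals} that some coarse state contain more than one fine state is harmless: in the opposite case, $A$ and $D(\mu)$ are inverse permutations, so $S(\mu) = I$ and $J(\mu) = 0$ identically.

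For the geometric bound on $\lVert \mu^k - \mu \rVert_\eps$, the plan is to first show that $\nu \mapsto J(\nu)$ is continuous at $\mu$ in the operator norm $\lVert \cdot \rVert_\eps$, and then run a routine contraction-mapping induction. For continuity, observe that $D(\nu)$ depends on $\nu$ only through the ratios $\nu_j / (A \nu)_i$, which are rational functions of $\nu$ and hence continuous on the open set $\{\nu : A\nu > 0\}$, which contains a neighborhood of $\mu$. Combining this with continuity of matrix inversion on the open set of invertible matrices, together with Lemma~\ref{lem: error propagation for coarse correction step} which guarantees invertibility of $A(I - P + \mu \1^\t) D(\nu)$ for $\nu$ near $\mu$, yields continuity of
\begin{equation*}
  S(\nu) = D(\nu)[A(I - P + \mu \1^\t) D(\nu)]^{-1} A(I - P + \mu \1^\t)
\end{equation*}
at $\mu$, and therefore of $J(\nu) = \hat P (I - S(\nu))$. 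Since all matrix norms on $\Real^{N \times N}$ are equivalent, this continuity transfers to $\lVert \cdot \rVert_\eps$.

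Now fix $\eps > 0$ with $\lVert J(\mu) \rVert_\eps < 1$ and $\eta \in (0, 1 - \lVert J(\mu) \rVert_\eps]$, and set $\rho := \lVert J(\mu) \rVert_\eps + \eta \leq 1$. By the continuity established above, we can choose $r > 0$ small enough that $\lVert \nu - \mu \rVert_\eps \leq r$ implies both $\nu > 0$, so that Lemma~\ref{lem: well-posedness} gives well-posedness of the iteration, and $\lVert J(\nu) \rVert_\eps \leq \rho$. Assuming $\lVert \mu^0 - \mu \rVert_\eps \leq r$, I would induct on $k$: if $\lVert \mu^k - \mu \rVert_\eps \leq r$, then Lemma~\ref{lem: error propagation formula} gives
\begin{equation*}
  \lVert \mu^{k+1} - \mu \rVert_\eps = \lVert J(\mu^k)(\mu^k - \mu) \rVert_\eps \leq \rho \lVert \mu^k - \mu \rVert_\eps \leq r,
\end{equation*}
which both closes the induction and delivers the advertised geometric bound. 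The only genuinely new ingredient is the continuity of $J$ at $\mu$, and that reduces to continuity of rational functions and of matrix inversion; everything else is essentially bookkeeping once the norm $\lVert \cdot \rVert_\eps$ and the factorization of $S(\nu)$ from Lemma~\ref{lem: error propagation for coarse correction step} are in hand.
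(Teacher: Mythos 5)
Your proposal is correct and follows essentially the same route as the paper's proof: combine Lemma~\ref{lem: eps norm of J} with Theorem~\ref{lem: first estimate of norm of projection of J onto conditionals} to get $\lVert J(\mu) \rVert_\eps < 1$ for small $\eps$, invoke continuity of $\nu \mapsto J(\nu)$ at $\mu$, and run the contraction/induction argument via Lemma~\ref{lem: error propagation formula}. Your version is actually a bit more careful than the paper's: you explicitly justify continuity of $S(\nu)$ through continuity of $D(\nu)$ (a rational function of $\nu$) and of matrix inversion, you spell out the induction that the paper compresses into ``the result follows,'' and you dispose of the degenerate case where every coarse state is a singleton (so the hypothesis of Theorem~\ref{lem: first estimate of norm of projection of J onto conditionals} fails) by noting $S(\mu) = I$ and $J(\mu) = 0$ there, a point the paper leaves implicit.
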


\begin{proof}
  See Appendix~\ref{apx: local convergence}.
\end{proof}

\section{The Rate of Convergence}
\label{sec: convergence rate}

Here, we analyze the asymptotic rate of convergence of IAD. First, we show that the rate is bounded by the spectral radius $\rho(J(\mu))$. We then derive an upper bound on the spectral radius based on the results in Section~\ref{sec: local convergence}. Our upper bound is appealing and easy to interpret, but it significantly overestimates $\rho(J(\mu))$ for some irreversible processes. See Section~\ref{subsec: iad one d irreversible} for an example. Therefore, we also derive an exact formula for $\rho(J(\mu))$. Our exact formula could be the basis for a better understanding of the rate of convergence of IAD for irreversible processes, and it yields an interpretable exact expression for $\rho(J(\mu))$ when $P$ is reversible. 

We now show that $\rho(J(\mu))$ bounds the asymptotic rate of convergence. By the asymptotic rate of convergence, we mean the expression on the left hand side of~\eqref{eqn: first estimate of asymptotic convergence rate} below. 

\begin{lemma}
  \label{lem: asymptotic rate of convergence}
  Let $P$ and $P^\t P$ be irreducible, let $r>0$ be as in Theorem~\ref{thm: local convergence}, and assume that $\lVert \mu^0 - \mu \rVert_{\eps} < r$. For any norm $\lVert \cdot \rVert$ on $\Real^{N}$, we have
  \begin{equation}
    \label{eqn: first estimate of asymptotic convergence rate}
    \limsup_{n \rightarrow \infty} \lVert \mu^k - \mu \rVert^{1/k} \leq \rho(J(\mu)).
  \end{equation}
\end{lemma}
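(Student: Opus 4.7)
My plan is to reduce the statement to the standard fact that the asymptotic rate of convergence of a linear dynamical system is governed by the spectral radius of the iteration operator. The nonlinearity is mild because Lemma~\ref{lem: error propagation formula} gives $\mu^{k+1} - \mu = J(\mu^k)(\mu^k - \mu)$, and Theorem~\ref{thm: local convergence} already guarantees $\mu^k \to \mu$ under the hypotheses, so $J(\mu^k) \to J(\mu)$ by continuity of $\nu \mapsto J(\nu)$ on the set where $A\nu > 0$.

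The first step is to invoke the following standard fact from matrix analysis: for any operator $T \in \Real^{N \times N}$ and any $\delta > 0$, there exists a norm $\lVert \cdot \rVert_\star$ on $\Real^N$ (depending on $T$ and $\delta$) whose induced operator norm satisfies $\lVert T \rVert_\star \leq \rho(T) + \delta$. One constructs $\lVert \cdot \rVert_\star$ from a basis in which $T$ is close to Jordan form, rescaling the generalized eigenvectors so the off-diagonal entries of the Jordan blocks are of size $\delta$. Applying this to $T = J(\mu)$, fix $\delta > 0$ and obtain a norm $\lVert \cdot \rVert_\star$ with $\lVert J(\mu) \rVert_\star \leq \rho(J(\mu)) + \delta$.

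The second step uses continuity. Since $\mu^k \to \mu$ by Theorem~\ref{thm: local convergence}, and the map $\nu \mapsto J(\nu)$ is continuous at $\mu$ (it is built from $\hat P$, $A$, $D(\nu)$, and the inverse of $A(I-P+\mu\1^\t)D(\nu)$, which is invertible at $\nu = \mu$ by Lemma~\ref{lem: error propagation for coarse correction step} and depends continuously on $\nu$), there exists $K$ so that for all $k \geq K$,
\begin{equation*}
  \lVert J(\mu^k) \rVert_\star \leq \rho(J(\mu)) + 2\delta.
\end{equation*}
Iterating the identity from Lemma~\ref{lem: error propagation formula} from stage $K$ onward yields
\begin{equation*}
  \lVert \mu^{k} - \mu \rVert_\star \leq \bigl(\rho(J(\mu)) + 2\delta\bigr)^{k-K} \lVert \mu^K - \mu \rVert_\star
\end{equation*}
for all $k \geq K$. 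Taking $k$-th roots and then $\limsup_{k \to \infty}$ gives
\begin{equation*}
  \limsup_{k \to \infty} \lVert \mu^k - \mu \rVert_\star^{1/k} \leq \rho(J(\mu)) + 2\delta.
\end{equation*}

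Finally, since all norms on $\Real^N$ are equivalent, the $\limsup$ in any other norm $\lVert \cdot \rVert$ equals the $\limsup$ in $\lVert \cdot \rVert_\star$, so the same bound holds for the norm in the statement. Letting $\delta \downarrow 0$ concludes the proof. The only nontrivial ingredient is the existence of the norm $\lVert \cdot \rVert_\star$ arbitrarily close to $\rho(J(\mu))$; this is where the whole argument rests, and it is a textbook fact. Everything else is continuity and iteration of the error recursion, so I do not anticipate a serious obstacle.
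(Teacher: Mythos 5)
Your proof is correct, but it takes a genuinely different route from the paper's. Where you invoke the standard construction of a norm $\lVert\cdot\rVert_\star$ with induced operator norm $\lVert J(\mu)\rVert_\star\leq\rho(J(\mu))+\delta$ (the Jordan-form rescaling argument) and then get a step-by-step contraction from the recursion $\mu^{k+1}-\mu=J(\mu^k)(\mu^k-\mu)$ once $\lVert J(\mu^k)\rVert_\star$ is close to $\lVert J(\mu)\rVert_\star$, the paper instead applies Gelfand's formula $\lim_n\lVert J(\mu)^n\rVert^{1/n}=\rho(J(\mu))$ in a fixed norm, chooses $N$ with $\lVert J(\mu)^N\rVert^{1/N}\leq\rho(J(\mu))+\delta$, and then breaks the product $\prod_{i=0}^{K-1}J(\mu^i)$ into blocks of length $N$; continuity of $J(\cdot)$ and $\mu^k\to\mu$ make each block's norm converge to $\lVert J(\mu)^N\rVert$, and a logarithm-and-average argument finishes. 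Both are standard and both handle the nonstationary iteration through the same two ingredients: continuity of $\nu\mapsto J(\nu)$ and the already-established convergence $\mu^k\to\mu$. Your version buys a slightly cleaner exposition, since the per-step contraction is immediate once the adapted norm is in hand; the paper's version buys self-containedness in the sense that it works in the ambient norm and needs no auxiliary norm construction, at the cost of a somewhat more technical block-decomposition calculation. One small point you should make explicit for completeness: if $\mu^K=\mu$ for some $K$, the recursion gives $\mu^k=\mu$ for all $k\geq K$ and the inequality is trivial; otherwise $\lVert\mu^K-\mu\rVert_\star^{1/k}\to 1$ as $k\to\infty$, which is what your passage to the limit tacitly uses.
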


\begin{proof}
See Appendix~\ref{apx: asymptotic rate of convergence}.
\end{proof}

We now estimate $\rho(J(\mu))$. Our approach is based on comparing the orthogonal coarse projection $\Pi(\mu)$ with the orthogonal projection on the eigenvectors associated with the largest eigenvalues of $P^{\ast, 1/\mu} P$.

\begin{definition}
  Fix some $k < N$, and let $Q$ be the $\ell^2(1/\mu)$-orthogonal projection on the eigenvectors $v_1 , \dots, v_k$ associated with the $k$ largest eigenvalues of $P^{\ast, 1/\mu} P$. 
That is, define 
\begin{equation}
  \label{eq: def of Q}
  Q =  \mu \1^\t+ \sum_{i=2}^k v_i (v'_i)^\t.
\end{equation}
Here, $\{v_i: i=1, \dots, N\}$ and $\{v'_i: i = 1, \dots, N\}$ are the left and right eigenvectors of $P^{\ast, 1/\mu} P$ as in~\eqref{eq: diagonalization}.
\end{definition}

Note that $Q=Q^{\ast, 1/\mu}$ and $Q^2 = Q$, so $Q$ is an orthogonal projection in $\ell^2(1/\mu)$. Moreover, by~\eqref{eq: diagonalization}, we have $Q P^{\ast, 1/\mu}  P = P^{\ast, 1/\mu}  P Q$, and $\sigma(Q P^{\ast, 1/\mu}  P) = \{\lambda_1, \dots, \lambda_k\}$.

When thinking about $Q$, we suggest that the reader keep the family of Markov chains defined in Section~\ref{sec: simple markov chain model of odl} in mind. We devised this family of chains as a simple model of the reversible metastable diffusion processes encountered in molecular simulation. For these chains, $P^{\ast, 1/\mu} P$ typically has a small number $k$ of eigenvalues that are very close to one. The rest of the spectrum is much farther from one. That is,
\begin{equation*}
  \frac{1-\lambda_k}{1-\lambda_{k+1}} \ll 1.
\end{equation*}
 In our examples, we choose $Q$ to be the projection associated with these $k$ largest eigenvalues. 

Our estimate of $\rho(J(\mu))$ is expressed in terms of the angle from $\rg(Q^\t)$ to $\rg(\Pi(\mu)^\t)$ in the $\ell^2(\mu)$-inner product.

\begin{lemma}
  \label{def: sin theta}
  We have  $0 \leq \lVert (I - \Pi(\mu)^\t) Q^\t \rVert_{\mu} \leq 1$, and therefore we may define an angle  $\theta \in [0,\pi/2]$ by
  \begin{equation}
    \label{eqn: def of sin theta}
    \sin (\theta) := \lVert (I - \Pi(\mu)^\t) Q^\t \rVert_{\mu}.
  \end{equation}
Note that here the norm is weighted by $\mu$ not $1/\mu$.
\end{lemma}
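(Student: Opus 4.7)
The plan is to recognize both $\Pi(\mu)^\t$ and $Q^\t$ as orthogonal projections in the $\ell^2(\mu)$ inner product, after which the bound $\lVert(I-\Pi(\mu)^\t)Q^\t\rVert_\mu \le 1$ follows from the standard fact that the composition of two orthogonal projections in any Hilbert space has operator norm at most one.

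First I would use the identity $M^{\ast,\nu} = \diag(1/\nu) M^\t \diag(\nu)$ for the adjoint in a weighted inner product. By Lemma~\ref{lem: properties of multigrid projection}, $\Pi(\mu)$ is self-adjoint in $\ell^2(1/\mu)$, so $\Pi(\mu) = \diag(\mu)\Pi(\mu)^\t\diag(1/\mu)$, which rearranges to $\Pi(\mu)^\t = \diag(1/\mu)\Pi(\mu)\diag(\mu)$. Computing the $\ell^2(\mu)$ adjoint of $\Pi(\mu)^\t$ then gives $(\Pi(\mu)^\t)^{\ast,\mu} = \diag(1/\mu)\Pi(\mu)\diag(\mu) = \Pi(\mu)^\t$. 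Since also $(\Pi(\mu)^\t)^2 = (\Pi(\mu)^2)^\t = \Pi(\mu)^\t$, we conclude $\Pi(\mu)^\t$ is an orthogonal projection with respect to $\langle,\rangle_\mu$. The identical argument, starting from the observation noted just after the definition of $Q$ that $Q = Q^{\ast,1/\mu}$ and $Q^2 = Q$, shows that $Q^\t$ is also an orthogonal projection in $\ell^2(\mu)$.

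Second, $I - \Pi(\mu)^\t$ is then itself an orthogonal projection in $\ell^2(\mu)$ (onto the orthogonal complement of $\rg(\Pi(\mu)^\t)$). Since the operator norm of any orthogonal projection in a Hilbert space is at most one, a submultiplicativity estimate gives
\begin{equation*}
  \lVert (I - \Pi(\mu)^\t) Q^\t \rVert_\mu \le \lVert I - \Pi(\mu)^\t \rVert_\mu \,\lVert Q^\t \rVert_\mu \le 1.
\end{equation*}
The lower bound $\lVert (I - \Pi(\mu)^\t) Q^\t \rVert_\mu \ge 0$ is immediate from the definition of an operator norm. Hence the quantity lies in $[0,1]$ and we may define $\theta \in [0,\pi/2]$ by $\sin(\theta) := \lVert (I - \Pi(\mu)^\t) Q^\t \rVert_\mu$.

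There is no substantive obstacle here; the only subtlety is keeping track of the weights carefully, since $\Pi(\mu)$ and $Q$ are orthogonal projections in $\ell^2(1/\mu)$ but the norm in the statement is $\ell^2(\mu)$, and it is precisely the transposition that swaps the two weightings. Once this duality is noted, the bound is geometric.
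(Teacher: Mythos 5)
Your proof is correct and follows essentially the same route as the paper: both arguments rest on the observation that $\Pi(\mu)^\t$ and $Q^\t$ are $\ell^2(\mu)$-orthogonal projections and then conclude via submultiplicativity. You simply spell out the duality calculation (that transposing swaps the $1/\mu$ and $\mu$ weightings) which the paper states without proof.
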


\begin{proof}
  Both $\Pi^\t$ and $Q^\t$ are orthogonal projections with respect to the $\ell^2(\mu)$-inner product, since $\Pi$ and $Q$ are orthogonal projections with respect to $\ell^2(1/\mu)$. Therefore, $\lVert (I - \Pi(\mu)^\t) Q^\t \rVert_{\mu} \leq \lVert (I - \Pi(\mu)^\t) \rVert_{\mu} \lVert Q^\t \rVert_{\mu} = 1$, since any orthogonal projection must have norm equal to one.
\end{proof}

One can show that the angle defined above coincides with the typical definition
\begin{align*}
  \sin(\theta) &:= \text{gap}(\rg(Q^\t),\rg(\Pi^\t)) \\
               &= \max_{\substack{u \in \rg(Q^\t) \\ \lVert u \rVert_\mu = 1}} \min_{\substack{w \in \rg(\Pi^t) \\ \lVert w \rVert_\mu=1}} \lVert u-w \rVert_{\mu} \\
  &=  \max_{\substack{u \in \rg(Q^\t) \\ \lVert u \rVert_\mu = 1}} \lVert (I-\Pi^t)u \rVert_{\mu}.
\end{align*}
We do not prove this, since it will not be important below.

We understand $\theta$ as a measure of how well one can approximate elements of $\rg(Q^\t)$ within $\rg(\Pi(\mu)^\t)$. Note that
\begin{equation*}
\rg(\Pi^\t(\mu)) = \spn \{\1_{S_1}, \dots, \1_{S_n}\},
\end{equation*}
and that
\begin{equation*}
  \rg(Q^\t) = \spn \{ v'_1, \dots, v'_k\} .
\end{equation*}
That is, $\rg(\Pi(\mu)^\t)$ is spanned by the characteristic functions of the coarse states, and $\rg (Q^\t)$ is spanned by the first $k$ left eigenvectors of $P^{\ast,1/\mu} P$. 
Therefore, $\theta$ will be small when each of the first $k$ left eigenvectors of $P^{\ast,1/\mu} P$ is well-approximated by a linear combination of characteristic functions of coarse states. Equivalently, $\theta$ is small when each of the first $k$ left eigenvectors can be approximated by a function that is constant on the coarse states.

We now estimate the asymptotic rate of convergence.

\begin{theorem}
  \label{thm: estimate of rate of convergence with spectrum and angle involved}
  Assume that $P$ and $P^\t P$ are irreducible and that at least one coarse state contains more than one fine state. We have
  \begin{align}    
    \rho(J(\mu))^2 &\leq \lVert (I - \Pi(\mu)) J(\mu) \rVert_{1/\mu}^2 \nonumber \\
    &\leq 1 - \frac{1}{\lVert (I - \Pi(\mu)) (I - \hat P^{\ast,1/\mu } \hat P)^{-1}  (I - \Pi(\mu)) \rVert_{1/\mu}} \label{eq: norm upper bound on rho} \\
    &\leq 1 - \frac{1}{\sin^2(\theta) \frac{1}{1- \lambda_2} + \cos^2(\theta) \frac{1}{1- \lambda_{k+1}}}. \label{eq: angle upper bound on rho}
  \end{align}
\end{theorem}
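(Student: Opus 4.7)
The plan is to establish the three inequalities in sequence. The middle inequality~\eqref{eq: norm upper bound on rho} is exactly the conclusion of Theorem~\ref{lem: first estimate of norm of projection of J onto conditionals}, so no additional work is required there. The first and third inequalities each hinge on properties of $\Pi(\mu)$ combined with a clean algebraic identity.

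For the first inequality, I would show that $J(\mu)\Pi(\mu) = 0$ and deduce the bound via Gelfand's formula. The vanishing is immediate: since $\Pi(\mu)$ and $S(\mu)$ are both projections onto $\rg(D(\mu))$, we have $S(\mu)\Pi(\mu) = \Pi(\mu)$, hence $(I-S(\mu))\Pi(\mu)=0$ and $J(\mu)\Pi(\mu) = \hat P(I-S(\mu))\Pi(\mu)=0$. This gives $J(\mu) = J(\mu)(I-\Pi(\mu))$, and induction on $m$ yields
\[
J(\mu)^m = J(\mu)\,[(I-\Pi(\mu))J(\mu)]^{m-1} \quad \text{for all } m \geq 1.
\]
Taking the operator norm, extracting the $m$-th root, and letting $m \to \infty$ gives $\rho(J(\mu)) \leq \lVert (I-\Pi(\mu))J(\mu) \rVert_{1/\mu}$, which squares to the first claim.

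For the third inequality, I diagonalize $T := (I - \hat P^{\ast,1/\mu}\hat P)^{-1}$ using the spectral data of $P^{\ast,1/\mu}P$ and split $\rg(I-\Pi(\mu))$ into slow and fast modes via $Q$. Expanding $\hat P = P - \mu\1^\t$ (using $P^{\ast,1/\mu}\mu = \mu$ and $\1^\t P = \1^\t$) gives $\hat P^{\ast,1/\mu}\hat P = P^{\ast,1/\mu}P - \mu\1^\t = \sum_{j=2}^N \lambda_j v_j v_j^\t \diag(1/\mu)$, so on the codimension-one subspace $\{y : \1^\t y = 0\}$, which contains $\rg(I-\Pi(\mu))$ because $\Pi(\mu)$ preserves entry sums, $T$ acts as multiplication by $1/(1-\lambda_j)$ on each $\spn\{v_j\}$. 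Since $(I-\Pi(\mu))T(I-\Pi(\mu))$ is self-adjoint and positive in $\ell^2(1/\mu)$, its operator norm equals $\sup_{y \in \rg(I-\Pi(\mu))\setminus\{0\}} \langle y, Ty\rangle_{1/\mu}/\lVert y\rVert_{1/\mu}^2$. Decomposing $y = Qy + (I-Q)y$ orthogonally gives
\[
\langle y, Ty\rangle_{1/\mu} \leq \frac{\lVert Qy\rVert_{1/\mu}^2}{1-\lambda_2} + \frac{\lVert (I-Q)y\rVert_{1/\mu}^2}{1-\lambda_{k+1}}.
\]
Substituting $\lVert(I-Q)y\rVert_{1/\mu}^2 = \lVert y\rVert_{1/\mu}^2 - \lVert Qy\rVert_{1/\mu}^2$ and maximizing over $\lVert Qy\rVert_{1/\mu}^2 \in [0, \sin^2(\theta)\lVert y\rVert_{1/\mu}^2]$ (using $1/(1-\lambda_2) \geq 1/(1-\lambda_{k+1})$) produces the stated bound.

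The main obstacle is the estimate $\lVert Qy\rVert_{1/\mu} \leq \sin(\theta)\lVert y\rVert_{1/\mu}$ for $y \in \rg(I-\Pi(\mu))$, because $\sin(\theta)$ is defined via the $\ell^2(\mu)$-norm of transposed operators whereas the rest of the argument lives in $\ell^2(1/\mu)$. I plan to reconcile the two norms by using that $Q$ and $\Pi(\mu)$ are self-adjoint in $\ell^2(1/\mu)$, which implies $Q^\t = \diag(1/\mu)Q\diag(\mu)$ and $\Pi(\mu)^\t = \diag(1/\mu)\Pi(\mu)\diag(\mu)$; a direct substitution then yields $\sin(\theta) = \lVert (I-\Pi(\mu)^\t)Q^\t\rVert_\mu = \lVert (I-\Pi(\mu))Q\rVert_{1/\mu}$. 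Finally, self-adjointness gives $\lVert Q(I-\Pi(\mu))\rVert_{1/\mu} = \lVert ((I-\Pi(\mu))Q)^{\ast,1/\mu}\rVert_{1/\mu} = \sin(\theta)$, so that $\lVert Qy\rVert_{1/\mu} = \lVert Q(I-\Pi(\mu))y\rVert_{1/\mu} \leq \sin(\theta)\lVert y\rVert_{1/\mu}$ for every $y \in \rg(I-\Pi(\mu))$, closing the argument.
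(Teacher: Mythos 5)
Your proposal is correct, and it diverges from the paper's argument in several places, all of which are valid alternatives. For the first inequality, the paper instead invokes Lemma~\ref{lem: eps norm of J} to get $\rho(J(\mu)) \le \inf_{\eps>0}\lVert J(\mu)\rVert_\eps \le \lVert (I-\Pi(\mu))J(\mu)\rVert_{1/\mu}$, re-using the $\eps$-norm machinery from the local-convergence proof; your route via Gelfand's formula applied to $J^m = J\,[(I-\Pi)J]^{m-1}$ is more elementary and self-contained, and gets the same bound without introducing any auxiliary norm. For the third inequality, the paper explicitly introduces the positive square root $L=(I-\hat P^{\ast,1/\mu}\hat P)^{-1/2}$ and reduces to estimating $\lVert L(I-\Pi(\mu))\rVert_{1/\mu}^2$, whereas you bypass the square root by using the variational characterization $\lVert (I-\Pi)T(I-\Pi)\rVert_{1/\mu}=\sup_{y\in\rg(I-\Pi)}\langle y,Ty\rangle_{1/\mu}/\lVert y\rVert_{1/\mu}^2$ available because the operator is self-adjoint and nonnegative; the two routes are equivalent in content but yours avoids a construction. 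Finally, for the key identity $\sin(\theta)=\lVert Q(I-\Pi(\mu))\rVert_{1/\mu}$, the paper applies its duality Lemma~\ref{lem: dual operator norms} ($\lVert M\rVert_{1/\nu}=\lVert M^\t\rVert_\nu$) directly with $M=Q(I-\Pi)$, while you derive it by explicit conjugation with $\diag(\mu)$ and $\diag(1/\mu)$ using that these are mutually inverse isometries between $\ell^2(\mu)$ and $\ell^2(1/\mu)$, then pass through $\lVert(I-\Pi)Q\rVert=\lVert Q(I-\Pi)\rVert$ via self-adjointness; this is slightly longer but gives the same result without relying on the dual-norm lemma. One small point worth noting: your decomposition $\langle y, Ty\rangle_{1/\mu}=\langle Qy,TQy\rangle_{1/\mu}+\langle(I-Q)y,T(I-Q)y\rangle_{1/\mu}$ relies on $Q$ commuting with $T$, which holds because both are functions of $P^{\ast,1/\mu}P$ -- you should state this explicitly, and also flag that the eigenvalue bound $\langle Qy,TQy\rangle_{1/\mu}\le\lVert Qy\rVert_{1/\mu}^2/(1-\lambda_2)$ relies on having already restricted $y$ to $\rg(I-\Pi(\mu))\subset\spn\{v_1\}^\perp$ (which you do correctly note), since $T$ has eigenvalue $1$, not $1/(1-\lambda_1)$, in the direction of $v_1=\mu$.
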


\begin{proof}
  See Appendix~\ref{apx: estimate of rate with spectrum and angle}. 
\end{proof}


In our examples in Section~\ref{sec: examples}, we consider metastable chains for which $P^{\ast, 1/\mu}P$ has a small number of eigenvalues very close to one, and we choose $k$ to be the number of such eigenvalues. We note however that Theorem~\ref{thm: estimate of rate of convergence with spectrum and angle involved} holds for any $k$. To obtain a useful estimate, one has to choose $k$ carefully. If $k$ is too large, then $\sin(\theta)$ will be close to one, giving only $\sqrt{\lambda_2}$ as an upper bound. 

Note that the right-hand-side of~\eqref{eq: angle upper bound on rho} increases from $\sqrt{\lambda_{k+1}}$ to $\sqrt{\lambda_2}$ as $\sin^2(\theta)$ increases from zero to one. Thus, the rate of convergence is never larger than $\sqrt{\lambda_2}$, which is the contraction constant of the power method in the $\ell^2(1/\mu)$-norm.
For reversible chains, $\sqrt{\lambda_2}$ is also the asymptotic rate of convergence of the power method, since any reversible $P$ is self-adjoint with respect to the  $\ell^2(1/\mu)$-inner product by Lemma~\ref{lem: time reversal} and therefore $\lVert \hat P \rVert_{1/\mu} = \rho(\hat P)$. Thus, for reversible chains, the asymptotic rate of convergence of IAD is never greater than the asymptotic rate for the power method. 

For irreversible chains, however, the asymptotic rate of convergence $\rho(\hat P)$ of the power method may be less than the contraction constant $\lVert \hat P \rVert_{1/\mu}$. Theorem~\ref{thm: estimate of rate of convergence with spectrum and angle involved} may significantly overestimate $\rho(J(\mu))$ in such cases. For example, suppose there is only a single coarse state $S_1= \Omega$. In that case, IAD reduces to the power method, and $J(\mu) = \hat P$. Moreover, $I- \Pi(\mu) = I - \mu \1^\t$, so  $(I - \Pi(\mu)) J(\mu) = \hat P$. Note that our upper bounds in Theorem~\ref{thm: estimate of rate of convergence with spectrum and angle involved} are in fact upper bounds on $\lVert (I - \Pi(\mu)) J(\mu) \rVert_{1/\mu}$. Therefore, if there is only one coarse state, neither of our upper bounds can be smaller than $\lVert \hat P \rVert_{1/\mu}$. See Figures~\ref{fig: slightly irreversible shift study} and~\ref{fig: moderately irreversible shift study} for additional examples where our upper bounds overestimate $\rho(J(\mu))$. 

Since our upper bound in Theorem~\ref{thm: estimate of rate of convergence with spectrum and angle involved} may significantly overestimate the spectral radius for some irreversible chains, we also give an exact formula for the spectrum of $J(\mu)$. This formula could lead to a better understanding of IAD in the irreversible case, and it also leads to an interpretable exact formula for $\rho(J(\mu))$ for reversible processes. 

\begin{theorem}
\label{thm: exact spectrum of J}
Assume that $P$ and $P^\t P$ are irreducible. Assume that there is more than one coarse state and at least one coarse state contains more than one fine state. The spectrum of $J(\mu)$ is given by 
\begin{equation}
\sigma(J(\mu))=\left(1-\frac{1}{\sigma((I-\Pi(\mu))(I-\hat{P})^{-1}(I-\Pi(\mu)))\setminus\{0\}}\right)\cup\{0\}\label{eq: exact spectrum formula}
\end{equation}
\end{theorem}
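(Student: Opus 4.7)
The plan is to diagonalize $J(\mu) = \hat P T$, writing $T := I - S(\mu)$, by conjugating the nontrivial part of $J$ onto $\rg(I-\Pi(\mu))$ where it becomes transparently related to the operator $K := (I-\Pi(\mu))(I-\hat P)^{-1}(I-\Pi(\mu))$. The geometric setup involves three subspaces of $\Real^N$: $V := \rg(D(\mu)) = \rg(\Pi(\mu))$, $W_1 := \ker A = \rg(I-\Pi(\mu))$, and $W_2 := \ker(A(I-\hat P)) = \rg(T)$. First I would observe that $I - \hat P$ is invertible, since irreducibility of $P$ makes $1$ a simple eigenvalue of $P$ and $\sigma(\hat P) = (\sigma(P)\setminus\{1\})\cup\{0\}$. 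Next, since both $T$ and $I - \Pi(\mu)$ are projections with common kernel $V$, one has $\Real^N = V \oplus W_1 = V \oplus W_2$ and $\dim W_1 = \dim W_2 = N - n$. Two natural isomorphisms $W_1 \leftrightarrow W_2$ will be the engine of the proof: $T|_{W_1}:W_1\to W_2$ has inverse $(I-\Pi(\mu))|_{W_2}$ (a general fact about projections sharing a kernel), while $(I-\hat P)|_{W_2}:W_2\to W_1$ has inverse $(I-\hat P)^{-1}|_{W_1}$, because $w\in W_2$ is exactly the condition $A(I-\hat P)w=0$, i.e., $(I-\hat P)w\in W_1$.

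The next step is to reduce the eigenvalue problem to $W_2$. The standard identity $\sigma(AB)\setminus\{0\} = \sigma(BA)\setminus\{0\}$ applied to $(\hat P,T)$ shows that the nonzero spectrum of $J$ coincides with that of $T\hat P$; since $\rg(T\hat P)\subseteq W_2$, any nonzero eigenvector of $T\hat P$ already lies in $W_2$, giving $\sigma(J)\setminus\{0\} = \sigma(T\hat P|_{W_2})$. For $w\in W_2$, the identity $\hat P w = w - (I-\hat P)w$ together with $(I-\hat P)w\in W_1$ yields
\begin{equation*}
    T\hat P|_{W_2} = I_{W_2} - T|_{W_1}\,(I-\hat P)|_{W_2},
\end{equation*}
and conjugating by the isomorphism $T|_{W_1}:W_1\to W_2$ produces an operator on $W_1$ with the same spectrum,
\begin{equation*}
    (T|_{W_1})^{-1}\,(T\hat P|_{W_2})\,T|_{W_1} = I_{W_1} - (I-\hat P)|_{W_2}\,T|_{W_1}.
\end{equation*}

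Finally I would identify $(I-\hat P)|_{W_2}\,T|_{W_1}$ with $(K|_{W_1})^{-1}$. Since $K$ vanishes on $V$ and $(I-\Pi(\mu))$ acts as the identity on $W_1$, one has $K|_{W_1} = (I-\Pi(\mu))|_{W_2}\,(I-\hat P)^{-1}|_{W_1} = (T|_{W_1})^{-1}\,(I-\hat P)^{-1}|_{W_1}$, a composition of two isomorphisms, hence invertible with inverse $(I-\hat P)|_{W_2}\,T|_{W_1}$; moreover $\sigma(K)\setminus\{0\} = \sigma(K|_{W_1})$. Assembling everything, $\sigma(J)\setminus\{0\} = \{1 - 1/\alpha : \alpha \in \sigma(K)\setminus\{0\}\}$, and since the hypothesis $n\geq 2$ forces $V\neq 0$ and $J|_V = 0$, we also have $0\in\sigma(J)$, yielding the claimed formula. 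The main obstacle is setting up and verifying the two isomorphisms $W_1 \leftrightarrow W_2$ cleanly and identifying the product $(I-\hat P)|_{W_2}\,T|_{W_1}$ with $(K|_{W_1})^{-1}$; once those are in place the conjugation that turns $T\hat P|_{W_2}$ into $I - (K|_{W_1})^{-1}$ is a short computation.
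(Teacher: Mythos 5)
Your proof is correct and establishes the theorem, but by a genuinely different route than the paper. The paper's proof works with $(I-\Pi(\mu))J(\mu)$, which shares the spectrum of $J(\mu)$ by the block identity $J = J(I-\Pi)$, and establishes the two inclusions by separate eigenvector computations: the key algebraic identity there is $(I-\Pi) - (I-\Pi)J = (I-\hat P)(I-S)$ together with the invertibility of $M := (I-\hat P)(I-S)$ on $\rg(I-\Pi)$, and each direction is an explicit manipulation of the eigenvalue equation. Your argument instead passes through $T\hat P$ (via $\sigma(\hat P T)\setminus\{0\}=\sigma(T\hat P)\setminus\{0\}$, with $T = I - S(\mu)$), restricts to $W_2 = \rg(T) = \ker(A(I-\hat P))$, and exhibits an explicit similarity between $T\hat P\vert_{W_2}$ and $I_{W_1} - (K\vert_{W_1})^{-1}$ by conjugating through the isomorphism $T\vert_{W_1}:W_1\to W_2$, where $K := (I-\Pi)(I-\hat P)^{-1}(I-\Pi)$. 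What this buys is that both inclusions fall out of a single similarity transformation rather than two eigenvector chases, and it makes the geometry explicit: $\rg(D(\mu))$ has two natural oblique complements $W_1 = \ker A$ and $W_2 = \ker(A(I-\hat P))$, with $T\vert_{W_1}$ and $(I-\hat P)\vert_{W_2}$ the connecting isomorphisms. The underlying algebraic observations (in particular that $\hat P = I - (I-\hat P)$ and that $(I-\Pi)(I-S) = I-\Pi$ on the relevant subspaces) are of course shared with the paper, but the packaging is different and arguably cleaner. One small point to tidy in a final writeup: the claim $\sigma(J)\setminus\{0\} = \sigma(T\hat P\vert_{W_2})$ should be an equality of the sets with $0$ removed from both sides, since a priori $1$ could lie in $\sigma(K\vert_{W_1})$, which would put $0$ in $\sigma(T\hat P\vert_{W_2})$; this is harmless for the final formula because $\{0\}$ is adjoined explicitly, but the intermediate identity as stated is slightly too strong.
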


\begin{proof}
See Appendix~\ref{apx: proof of exact spectrum}.
\end{proof}

For reversible processes, Theorem~\ref{thm: exact spectrum of J} has the following corollary.

\begin{corollary}
  \label{cor: estimate of rate in reversible case}
  Let $P$ be reversible, and assume that $P$ and $P^\t P$ are irreducible. Assume that there is more than one coarse state and at least one coarse state contains more than one fine state.
  We have
  \begin{align}
    \rho(J(\mu)) &= 1 - \frac{1}{\lVert (I- \Pi(\mu)) (I - \hat P)^{-1} (I - \Pi(\mu)) \rVert_{1/\mu}}
    \label{eqn: exact norm formula for rho in reversible case}\\ 
                 & \leq  1 - \frac{1}{\sin^2(\theta) \frac{1}{1- \sqrt{\lambda_2}} + \cos^2(\theta) \frac{1}{1- \sqrt{\lambda_{k+1}}}}
    \label{eqn: upper bound in terms of theta in reversible case corollary}.
  \end{align}
\end{corollary}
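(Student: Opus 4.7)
The plan is to obtain Corollary~\ref{cor: estimate of rate in reversible case} as a specialization of Theorem~\ref{thm: exact spectrum of J} using the self-adjointness of $\hat P$ in the reversible case, followed by a Rayleigh-quotient/angle analysis analogous to the proof of Theorem~\ref{thm: estimate of rate of convergence with spectrum and angle involved}.

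To establish \eqref{eqn: exact norm formula for rho in reversible case}, I would first invoke Lemma~\ref{lem: time reversal}: reversibility gives $P^{\ast,1/\mu} = P$, so $\hat P$ is self-adjoint in $\ell^2(1/\mu)$ with real spectrum $\{0,\alpha_2,\ldots,\alpha_N\}$ satisfying $|\alpha_i| \leq \sqrt{\lambda_i} < 1$ (the last inequality from Lemma~\ref{lem: convergence of power method}). Hence $I - \hat P$ is positive definite, $(I - \hat P)^{-1}$ is self-adjoint and positive definite, and $M := (I - \Pi(\mu))(I - \hat P)^{-1}(I - \Pi(\mu))$ is self-adjoint and positive semidefinite. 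In particular, $\lVert M \rVert_{1/\mu}$ coincides with its largest eigenvalue. Theorem~\ref{thm: exact spectrum of J} then yields $\sigma(J(\mu)) \setminus \{0\} = \{1 - 1/\mu_i : \mu_i \in \sigma(M) \setminus \{0\}\}$, and identifying the eigenvalue of $M$ producing the extremal element of this set delivers the claimed formula.

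For the bound \eqref{eqn: upper bound in terms of theta in reversible case corollary}, I would bound $\lVert M \rVert_{1/\mu}$ using the variational formula
\begin{equation*}
\lVert M \rVert_{1/\mu} = \sup_{v \in \rg(I - \Pi(\mu)) \setminus \{0\}} \frac{\langle (I - \hat P)^{-1} v, v \rangle_{1/\mu}}{\lVert v \rVert_{1/\mu}^2}.
\end{equation*}
In the reversible case the eigenvectors of $P^{\ast,1/\mu} P = P^2$ coincide with those of $\hat P$, so $Q$ commutes with $(I - \hat P)^{-1}$ and the $\ell^2(1/\mu)$-orthogonal splitting $v = Q v + (I-Q) v$ leaves the cross terms zero. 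The bound $|\alpha_i| \leq \sqrt{\lambda_i}$ then gives $\langle (I-\hat P)^{-1} Q v, Q v\rangle_{1/\mu} \leq \lVert Q v\rVert_{1/\mu}^2/(1 - \sqrt{\lambda_2})$ and an analogous bound on $\rg(I-Q)$ involving $\sqrt{\lambda_{k+1}}$. A duality calculation using $\Pi(\mu) = \Pi(\mu)^{\ast, 1/\mu}$ and $Q = Q^{\ast,1/\mu}$ translates the definition $\sin(\theta) = \lVert (I - \Pi(\mu)^\t) Q^\t \rVert_\mu$ into $\sin(\theta) = \lVert (I - \Pi(\mu)) Q \rVert_{1/\mu}$, and then a Cauchy--Schwarz argument (for $v \in \rg(I-\Pi(\mu))$, write $\langle Q v, v\rangle = \langle (I - \Pi(\mu)) Q v, v\rangle$) yields $\lVert Q v \rVert_{1/\mu}^2 \leq \sin^2(\theta) \lVert v \rVert_{1/\mu}^2$. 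Since the resulting convex combination in $\lVert Q v\rVert^2$ is increasing, its supremum subject to $\lVert Q v\rVert^2 + \lVert (I-Q) v\rVert^2 = \lVert v\rVert^2$ and $\lVert Q v\rVert^2 \leq \sin^2(\theta)\lVert v\rVert^2$ is attained at $\lVert Qv\rVert^2 = \sin^2(\theta)\lVert v\rVert^2$, producing the stated denominator.

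The main obstacle is the first step: one must argue that the spectral radius of $J(\mu)$ is achieved precisely at the largest (rather than, say, smallest positive) eigenvalue of $M$. Since the nonzero eigenvalues of the positive semidefinite operator $M$ need not all exceed $1$, the map $\mu_i \mapsto |1 - 1/\mu_i|$ may in principle be maximized at either extremum of $\sigma(M) \setminus \{0\}$, and closing this gap requires leveraging the reversible structure (in particular, the commutation of $Q$ with $\hat P$ and the resulting fine spectral decomposition of $M$) to pin down which endpoint dominates.
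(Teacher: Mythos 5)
Your proposal follows the same route as the paper's proof: specialize Theorem~\ref{thm: exact spectrum of J}, use reversibility to make $(I-\Pi)(I-\hat P)^{-1}(I-\Pi)$ self-adjoint and positive semidefinite, and then convert the spectral formula $\sigma(J)\setminus\{0\}=\{1-1/\alpha:\alpha\in\sigma(M)\setminus\{0\}\}$ into $\rho(J)=1-1/\lVert M\rVert_{1/\mu}$. You have, however, correctly put your finger on the weak link: to pass from $\max_\alpha\lvert 1-1/\alpha\rvert$ to $1-1/\rho(M)$, one needs every nonzero eigenvalue of $M$ to be at least $1$, and neither positive semidefiniteness nor $\rho(J)<1$ (which only forces $\alpha>1/2$) gives that. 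The paper's own proof silently makes the same jump, so you have found a genuine gap rather than overlooked an argument.

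Where your proposal goes astray is in the final paragraph: you suggest that the gap can be closed by ``leveraging the reversible structure,'' but in fact the equality \eqref{eqn: exact norm formula for rho in reversible case} fails under the stated hypotheses. Take $T\in\Real^{3\times 3}$ the permutation matrix for $(1\,2)$ and set $P=(1-\delta)T+\tfrac{\delta}{3}\1\1^\t$ for small $\delta>0$; then $P$ is symmetric (hence reversible with uniform $\mu$), strictly positive (hence $P$ and $P^\t P$ irreducible), and $\hat P$ has eigenvalues $\{0,\,1-\delta,\,-(1-\delta)\}$. With coarse states $S_1=\{1,2\}$, $S_2=\{3\}$, one has $\rg(I-\Pi(\mu))=\spn\{(1,-1,0)^\t\}$, which is exactly the eigenspace of $\hat P$ with eigenvalue $-(1-\delta)$, so the unique nonzero eigenvalue of $M$ is $\tfrac{1}{2-\delta}<1$ and the right-hand side of \eqref{eqn: exact norm formula for rho in reversible case} is $1-(2-\delta)=\delta-1<0$. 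Direct computation (or Theorem~\ref{thm: exact spectrum of J}) gives $\sigma(J(\mu))=\{0,0,\delta-1\}$ and $\rho(J(\mu))=1-\delta>0$, so the claimed equality is false. The argument and the statement can be repaired by additionally assuming $\hat P\succeq 0$ on $\ell^2(1/\mu)$ (e.g.\ $P$ lazy, so every eigenvalue of $P$ is nonnegative), for then $(I-\hat P)^{-1}\succeq I$ forces every nonzero eigenvalue of $M$ to be at least $1$ and the map $\alpha\mapsto 1-1/\alpha$ is increasing and nonnegative on $\sigma(M)\setminus\{0\}$; without such an assumption you would have to settle for $\rho(J(\mu))=\max\bigl\{\lvert 1-1/\alpha_{\max}\rvert,\ \lvert 1-1/\alpha_{\min}\rvert\bigr\}$ over the nonzero spectrum of $M$, and then re-derive the angle bound directly from Theorem~\ref{thm: estimate of rate of convergence with spectrum and angle involved} (which uses $\hat P^{\ast,1/\mu}\hat P$ and is unaffected).

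Your sketch of the angle bound \eqref{eqn: upper bound in terms of theta in reversible case corollary} is otherwise sound and matches the paper's one-line remark that the argument is ``identical'' to Theorem~\ref{thm: estimate of rate of convergence with spectrum and angle involved} with $L=(I-\hat P)^{-1/2}$ and $\sqrt{\lambda_i}$ in place of $\lambda_i$. The commutation of $Q$ with $(I-\hat P)^{-1}$ and the duality $\lVert(I-\Pi)Q\rVert_{1/\mu}=\lVert(I-\Pi^\t)Q^\t\rVert_\mu=\sin\theta$ are both correct, as is the convexity argument at the end; but note that, absent the extra positivity hypothesis, the inequality must be proved for $\lVert(I-\Pi)J\rVert_{1/\mu}^2$ as in the theorem rather than deduced from the (incorrect) equality.
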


\begin{proof}
See Appendix~\ref{apx: corollary for reversible case}.
\end{proof}

We include the angle upper bound~\eqref{eqn: upper bound in terms of theta in reversible case corollary} in Corollary~\ref{cor: estimate of rate in reversible case} to demonstrate that the exact formula~\eqref{eqn: exact norm formula for rho in reversible case} can be interpreted in the same way as the norm upper bound~\eqref{eq: norm upper bound on rho} in Theorem~\ref{thm: estimate of rate of convergence with spectrum and angle involved}. There does not appear to be a meaningful difference between the two angle upper bounds in Theorem~\ref{thm: estimate of rate of convergence with spectrum and angle involved} and Corollary~\ref{cor: estimate of rate in reversible case} when $\lambda_{2}$ and $\lambda_{k+1}$ are both close to one. In fact, one can show that the two angle upper bounds are asymptotic in various limits as $\lambda_2$ and $\lambda_{k+1}$ tend to one.

We now list three implications of our theory for the choice of coarse states. First, Corollary~\ref{cor: estimate of rate in reversible case} suggests that \emph{for a reversible process one should choose coarse states so that the left eigenvectors of $P$ corresponding to eigenvalues close to one are well-approximated by vectors that are constant on the coarse states}. In Section~\ref{sec: examples}, we explain how to interpret this statement for processes like those used in molecular modeling. Similarly, Theorem~\ref{thm: estimate of rate of convergence with spectrum and angle involved} suggests that for irreversible processes one should choose coarse states so that the leading left eigenvectors of $P^{\ast,1/\mu} P$ are well-approximated. Our examples in Section~\ref{subsec: iad one d irreversible} indicate that this may be good advice for some irreversible chains, but that it could be misleading for some very irreversible chains, cf.\@ Figure~\ref{fig: moderately irreversible shift study}.

Second, since the quality of approximation is measured in the $\ell^2(\mu)$-norm, \emph{one only needs an accurate approximation in regions of high probability under $\mu$.} Regions of low probability will not have a significant influence on $\sin(\theta)$ unless some of the first $k$ eigenvectors are concentrated in those regions. As a consequence, the efficiency of IAD is not always as sensitive to the choice of coarse states as one might expect, and in some cases a very na\"ive choice of coarse states can work quite well. See Section~\ref{subsec: 2d computations} for an example. 

Third, Corollary~\ref{cor: estimate of rate in reversible case} proves that \emph{for reversible chains $\rho(J(\mu))$ decreases whenever the coarse states are refined.} We say that a set of coarse states $\mathcal{R} = \{T_1, \dots, T_m\}$ is a refinement of $\mathcal{C} = \{S_1, \dots, S_n\}$ if each $S_i$ can be expresssed as a union of $T_j$'s. Let $\Pi_\mathcal{R}$ and $\Pi_\mathcal{C}$ be the orthogonal coarse projections $\Pi(\mu)$ for the two partitions $\mathcal{R}$ and $\mathcal{C}$, respectively. To see that the spectral radius $\rho(J(\mu))$ for the refined partition $\mathcal{R}$ is less than or equal to the spectral radius for the coarse partition $\mathcal{C}$, observe that $\rg(\Pi_\mathcal{R}) \supset \rg(\Pi_\mathcal{C})$, so
\begin{equation*}
  \Pi_\mathcal{C} \Pi_\mathcal{R} = \Pi_\mathcal{R} \Pi_\mathcal{C} = \Pi_\mathcal{C},
\end{equation*}
since both $\Pi_\mathcal{C}$ and $\Pi_\mathcal{R}$ are  $\ell^2(1/\mu)$-orthogonal projections. Therefore,
\begin{align*}
  &\lVert (I-\Pi_\mathcal{R}) (I-\hat P)^{-1} (I - \Pi_\mathcal{R}) \rVert_{\frac{1}{\mu}} \\
  \qquad &=  \lVert (I-\Pi_\mathcal{R}) (I-\Pi_\mathcal{C}) (I-\hat P)^{-1} (I-\Pi_\mathcal{C}) (I - \Pi_\mathcal{R}) \rVert_{\frac{1}{\mu}} \\
  \qquad &\leq \lVert I-\Pi_\mathcal{R} \rVert_{\frac{1}{\mu}} \lVert (I-\Pi_\mathcal{C}) (I-\hat P)^{-1} (I-\Pi_\mathcal{C})  \rVert_{\frac{1}{\mu}} \lVert I-\Pi_\mathcal{R} \rVert_{\frac{1}{\mu}} \\
  \qquad &\leq  \lVert (I-\Pi_\mathcal{C}) (I-\hat P)^{-1} (I-\Pi_\mathcal{C})  \rVert_{\frac{1}{\mu}},
\end{align*}
since $ I-\Pi_\mathcal{R}$ is an $\ell^2(1/\mu)$-orthogonal projection and so $ \lVert I-\Pi_\mathcal{R} \rVert_{\frac{1}{\mu}}=1$. It follows by Corollary~\ref{cor: estimate of rate in reversible case} that the spectral radius for the refined partition is less than for the coarse partition.

For irreversible chains, the spectral radius may increase with refinement. See Section~\ref{subsec: iad one d irreversible} for an example. However, in our examples, we still observe a clear (but not monotone) trend toward lower spectral radii with increasing refinement. We also note that all of the upper bounds on the spectral radius in Theorem~\ref{thm: estimate of rate of convergence with spectrum and angle involved} must decrease with refinement even when $P$ is irreversible.

\section{Examples Related to Modeling Molecules}
\label{sec: examples}

Here, we apply the theory developed in Section~\ref{sec: convergence rate} to develop an understanding of the rate of convergence of IAD for processes similar to those used as molecular models. To begin, we review certain important properties of molecular models, and we define a simple family of Markov chains with similar properties. We then calculate $\rho(J(\mu))$ and the upper bounds in Theorem~\ref{thm: estimate of rate of convergence with spectrum and angle involved} for some members of this family and for various choices of coarse states. Our theory explains the observed dependence of the rate of convergence on the choice of coarse states for all but the most irreversible (and least metastable) chains.

\subsection{Molecular Models}
\label{subsec: molecular models}

Molecular modeling begins with the specification of a potential energy $V : \Real^{M} \rightarrow \Real$ defined on the space of all configurations of the the atoms comprising the system. Based on the potential, one defines a stochastic process to model the evolution of the system. For example, the overdamped Langevin dynamics
\begin{equation*}
  dX_t = - \nabla V(X_t) \, dt + \sqrt{ 2 k T } \, d B_t
\end{equation*}
may be used to model a system in contact with a heat bath at temperature $T$. (Here, $k$ is Boltzmann's constant.) Refer to~\cite{lelievre_partial_2016} for details. We recall the following well-known properties of overdamped Langevin:
\begin{itemize}
\item Under some conditions on $V$, the unique steady-state of $X_t$ is the \emph{Boltzmann distribution}
  \begin{equation*}
    \pi(dx) = Z^{-1} \exp \left ( \frac{V(x)}{kT} \right ) \, dx \text{ where } Z^{-1} = \int_{\Real^{3N}} \exp(-\beta V(x)) \, dx.
  \end{equation*}
\item $X_t$ is \emph{reversible}.
\item If the potential energy $V$ has several local minima, then when the temperature $T$ is low, $X_t$ is \emph{metastable}. In particular, trajectories tend to vibrate around local minima of $V$, undergoing transitions between minima only rarely. Under some conditions on $V$, in the limit as $T \rightarrow 0$, each local minimum of $V$ corresponds to an eigenvalue of the generator of $X_t$ that converges exponentially to zero. The remainder of the spectrum remains bounded away from zero uniformly in $T$. The eigenvectors corresponding to the eigenvalues that converge to zero are approximately constant on the basins of attraction of the minima. See~\cite[Section~2.5]{lelievre_partial_2016} for details. 
\end{itemize}

Overdamped Langevin is reversible, but we take a particular interest in irreversible models, since these are the hardest to sample. For example, consider 
\begin{equation}
  \label{eq: odl with nonconservative force}
  dX_t = (-\nabla V(X_t) +  \alpha F(X_t) ) \,  dt + \sqrt{2 k T} \, dB_t,
\end{equation}
where $F$ is a nonconservative force, i.e.\@ $F$ is not the gradient of a potential function. Here, $X_t$ is irreversible~\cite[Section~5.1.2]{lelievre_partial_2016}. There is no general, closed-form expression for the steady-state density of~\eqref{eq: odl with nonconservative force}. In particular, the steady-state is not the Boltzmann distribution. This is one reason why sampling nonequilibrium steady-states is difficult.

\subsection{Simple Markov Chain Model of Overdamped Langevin}
\label{sec: simple markov chain model of odl}

We define a family of Markov chains on a one-dimensional grid with properties similar to overdamped Langevin. Let $V : \Real \rightarrow \Real$, $T >0$, $[a,b ] \subset \Real$, and $N \in \mathbb{N}$. Define the discrete Boltzmann distribution $\mu \in \Real^N$ by
\begin{equation}
  \label{eq: discrete Boltzmann}
  \mu_i = Z^{-1} \exp \left (- \frac{ V \left ( a + \frac{b-a}{N} i \right )}{T} \right ),
\end{equation}
where  $Z =\sum_{i =1}^N \exp \left (- \frac{ V \left (  a + \frac{b-a}{N}i \right )}{T} \right )$.
Define the transition matrix 
\begin{alignat}{3}
P_{ii} &:= \frac{1}{2} \left (\frac{\mu(i)}{\mu(i-1) + \mu(i)} + \frac{\mu(i)}{\mu(i+1) + \mu(i)} \right )
&&\mbox{ for all } i\in \Omega,  \nonumber \\
P_{i+1,i} &:= \frac{1}{2}\frac{\mu(i+1)}{\mu(i+1) + \mu(i)} &&\mbox{ for all } i\in \Omega, \label{eq: example reversible transition matrix}\\
P_{i-1,i} &:= \frac{1}{2}\frac{\mu(i-1)}{\mu(i-1) + \mu(i)} &&\mbox{ for all } i\in \Omega, \mbox{ and } \nonumber \\
P_{ji} &:= 0 &&\mbox{ otherwise.} \nonumber
\end{alignat}
In the definition of $P$, we impose periodic boundary conditions, associating $0$ with $N$, $1$ with $N+1$, etc. This family of Markov chains was proposed in~\cite{thiede_sharp_2015} as a model of overdamped Langevin and other metastable processes often encountered in statistical physics. We also define a similar family of chains on a two-dimensional grid; see Appendix~\ref{apx: 2d chain definition} and Section~\ref{subsec: 2d computations}.

The Markov chain $P$ has properties similar to overdamped Langevin:
Observe that $P$ is in detailed balance with the discrete Boltzmann distribution $\mu$, so $P$ is reversible and has invariant distribution $\mu$. In our examples below, we choose $T$ small, and in that case $P$ is metastable, as demonstrated in~\cite{thiede_sharp_2015}. Moreover, in the examples given in Section~\ref{subsec: 1d computations}, for each local minimum of $V$, there is one eigenvalue of $P$ that lies very close to one and the remainder of the spectrum lies much farther from one. The left eigenvectors of $P$ associated with the slow eigenvalues are approximately constant on the basins of attraction of the minima. We will not prove that these properties of the spectrum hold in general (or even formulate them precisely), but we note that they do hold in our examples.

We also define irreversible Markov chains that are analogous to overdamped Langevin with a nonconservative force~\eqref{eq: odl with nonconservative force}. Define the \emph{right shift} $W \in \Real^{N \times N}$ by  
\begin{equation}
  \label{eq: right shift matrix}
  \begin{split}
    W_{i+1,i} &:= 1 \text{ for all } i \in \Omega, \text{ and } \\
    W_{j,i} &:= 0 \text{ otherwise,}
  \end{split}
\end{equation}
taking periodic boundary conditions as in the definition of $P$.
We consider chains of the form $(1 - \alpha) P + \alpha W$ for $\alpha \in (0,1)$.

\subsection{IAD for a Metastable, Reversible Chain on a One-Dimensional Grid}
\label{subsec: 1d computations}

We now test our theory on a highly metastable, reversible problem.
We will see that for any sufficiently refined choice of coarse states, IAD converges quickly compared with the power method. However, for some very poor choices of coarse states, IAD converges at essentially the same rate as the power method. We explain these results in detail using the rate estimate in terms of $\theta$~\eqref{eq: angle upper bound on rho} and the properties of molecular models outlined above.

Let $\mu$ be the discrete Boltzmann distribution defined in~\eqref{eq: discrete Boltzmann} with 
\begin{equation*}
V(x) =(1-x^{2})^{2}+\frac{1}{2}x,
\end{equation*}
$N = 100$, $[a,b]= [-1.7,1.55]$, and $T=1/10$. Let $P$ be the corresponding reversible transition matrix defined by~\eqref{eq: example reversible transition matrix}. Here, the potential $V$ has two minima, so we expect that exactly two eigenvalues of $P^{\ast, 1/\mu} P= P^2$ will lie very close to one with the remainder significantly farther from one. Table~\ref{tab: reversible, asymmetric 1-d eigenvalues} confirms that this is indeed the case. The left eigenvectors are displayed in Figure~\ref{fig: reversible 1D slow modes}. Based on our discussion of molecular models above, we expect that the eigenvectors corresponding to the two largest eigenvalues should be approximately constant on the basins of attraction of $V$. Here, on the grid used to define $\mu$, $V$ has a local maximum at $i=57$. It has a global maximum at $i=0$, which is identified with $i=100$ by periodicity. These maxima divide the state space into two basins of attraction. Observe that the first two eigenvectors, $v_1' = \1$ and $v_2'$, are roughly constant on the basins of attraction. The third is not. 

\begin{table}[h]
  \label{tab: reversible, asymmetric 1-d eigenvalues}
    \caption{The largest five eigenvalues of $P^{\ast, 1/\mu} P$ for the reversible, one-dimensional chain $P$ of Section~\ref{subsec: 1d computations}. We report $\sqrt{\lambda_k}$ instead of $\lambda_k$, since it is $\sqrt{\lambda_k}$ that appears in Theorem~\ref{thm: estimate of rate of convergence with spectrum and angle involved}. We have added the third column for comparison with Figure~\ref{fig: reversible 1D bound comparison}.}
  \begin{tabular}{lll}
    \toprule
    $k$ & $\sqrt{\lambda_k}$ & $-\log_{10}(1-\sqrt{\lambda_k})$ \\
    \midrule
    $1$ & $1$ & $\infty$ \\
    $2$ & $0.999992$ & $5.09$ \\
    $3$ & $0.991441$ & $2.07$ \\
    $4$ & $0.986243$ & $1.86$ \\
    $5$ & $0.979807$ & $1.69$ \\
          \bottomrule
  \end{tabular}
\end{table}

\begin{figure}[h]
  \label{fig: reversible 1D slow modes}
  \caption{Left: the steady-state $\mu$. Right: the eigenvectors of $P^{\ast, 1/\mu} P$. Note that $v'_1$ and $v'_2$ are approximately constant on the basins of attraction of the local minima of $V$, which correspond to maxima of the steady-state distribution. Here, the local minima of $\mu$ at $i=57$ and $i=0$ separate the basins of attraction. Note that $i=0$ and $i=100$ are identified, since we impose periodic boundary conditions.}
  \begin{center}
    \includegraphics{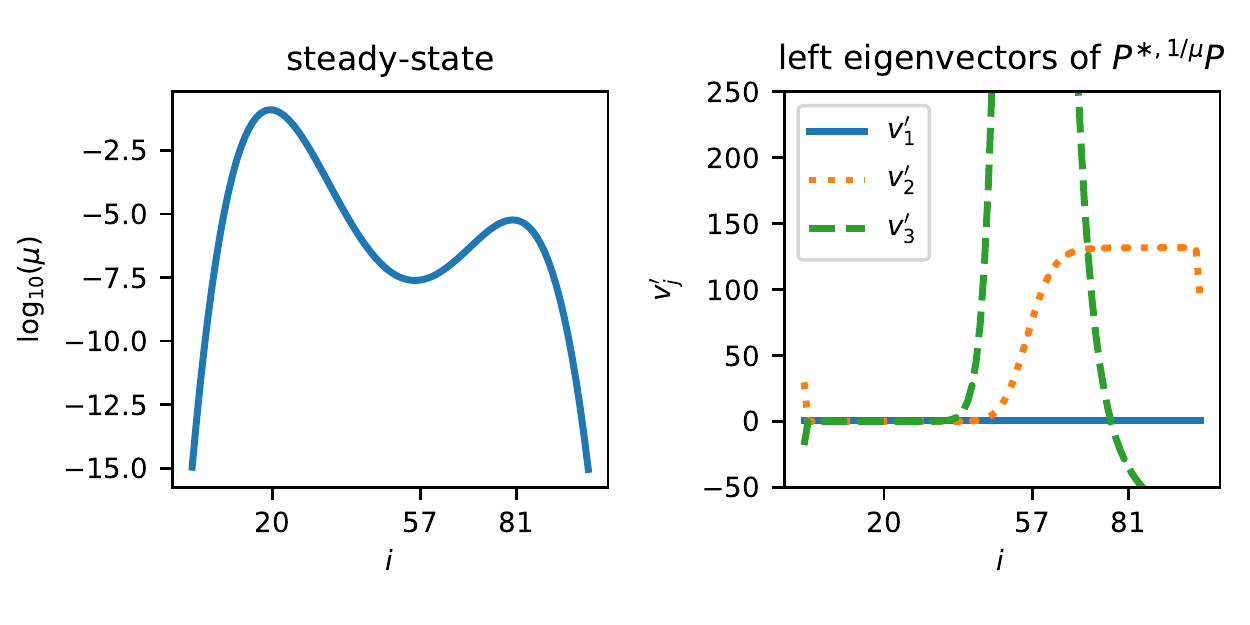}
  \end{center}
\end{figure}

We compute $\rho(J(\mu))$ and the upper bounds in Theorem~\ref{thm: local convergence} for several different choices of coarse states. First, we test uniform grids. We show for this simple, one-dimensional system that IAD converges quickly whenever the coarse states are sufficiently refined. Therefore, one does not need detailed prior knowledge of the eigenvectors of $P$ to choose good coarse states for this problem. For any $n \in \mathbb{N}$ and $\ell \in \{0, \dots, \lfloor 100/n \rfloor\}$, we define the uniform grid of coarse states 
\begin{equation}
  \label{eq: uniform grid}
    S_J = \left \{\left \lfloor J \frac{100}{n} \right \rfloor+\ell, \dots, + \left \lfloor (J+1) \frac{100}{n} \right \rfloor + \ell-1 \right \}
\end{equation}
for $J = 0, \dots, n-2$, and 
\begin{equation*}
  S_n = \{0, \dots, \ell-1\} \cup \left \{\left \lfloor (n-1) \frac{100}{n} \right \rfloor, \dots, 99 \right \}.
\end{equation*}
For example, for $n=2$ and $\ell=5$, the coarse states would be $S_0=\{5,\dots, 54\}$ and $S_1=\{0,\dots, 4\} \cup \{55, \dots, 99\}$. In Figure~\ref{fig: refinement study}, we report the maximum of $\rho(J(\mu))$ over all $l=0, \dots, \lfloor 100/n\rfloor$ for $n=1, \dots, 20$. We see a clear decreasing trend with a growing number of coarse states.    

\begin{figure}[h]
  \label{fig: refinement study}
  \caption{The maximum of $\rho(J(\mu))$ over all $l=0, \dots, \lfloor 100/n\rfloor$ for $n=1, \dots, 20$ for the uniform grids of coarse states defined in~\eqref{eq: uniform grid}. The curve labeled $\alpha=0$ is for the reversible chain of Section~\ref{subsec: 1d computations}. The other curves are for the irreversible chains in Section~\ref{subsec: iad one d irreversible}.}
  \begin{center}
    \includegraphics{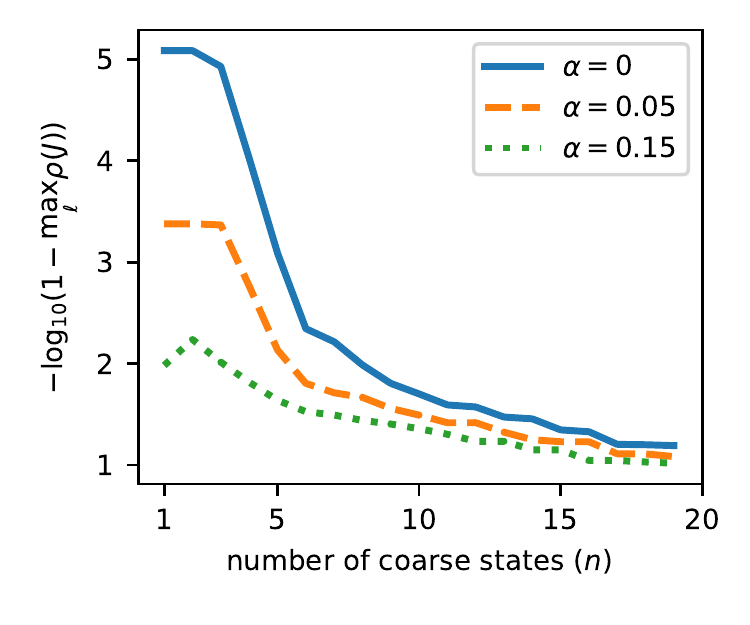}
  \end{center}
\end{figure}



We now investigate the dependence of $\rho(J(\mu))$ on the choice of coarse states in more detail. We compute the spectral radius $\rho(J(\mu))$ and our upper bounds for a family of coarse states of the form
\begin{equation}
\label{eq: shifted coarse states}
S_{1}=\{0,...,\ell\} \text{ and } S_{2}=\{\ell+1,...,99\},
\end{equation}
with $\ell=0, \dots, 98$. We display the results in Figure~\ref{fig: reversible 1D bound comparison}. Different locations $\ell$ of the boundary between coarse states result in different angles $\theta$, depending on how well $v'_2$ can be approximated by vectors that are constant on the coarse states. We expect $\theta$ to be small when the boundary between the coarse states coincides with the boundary between the basins of attraction, and this happens when $\ell=57$. Note that when $\ell$ is close to $57$, $\rho(J(\mu)) \approx \sqrt{\lambda_3}$, and it is as if one has eliminated the larger eigenvalue $\sqrt{\lambda_2}$. When $\ell$ is far from $57$, $\rho(J(\mu)) \approx \sqrt{\lambda_2}$, and IAD will converge at approximately the same rate as the power method. Note that both of the upper bounds in Theorem~\ref{thm: estimate of rate of convergence with spectrum and angle involved} yield precise estimates of $\rho(J(\mu))$, but the  norm bound~\eqref{eq: norm upper bound on rho} is so precise as to be indistinguishable from the spectral radius $\rho(J(\mu))$.

Note that the optimal coarse states in the family~\eqref{eq: shifted coarse states} considered above coincide with the basins of attraction of $V$. We wish to emphasize that it is not in general necessary to choose the coarse states to be the basins of attraction. It is only necessary that the leading left eigenvectors of $P$ be well-approximated by functions that are constant on the coarse states. Note that this will be true whenever the coarse states are sufficiently refined.

\begin{figure}[h]
  \caption{The spectral radius $\rho(J(\mu))$, the norm upper bound~\eqref{eq: norm upper bound on rho}, and the angle upper bound~\eqref{eq: angle upper bound on rho} for the reversible, one-dimensional chain. Each of these numbers $x$ is very close to one for all values of $\ell$, so we plot $-\log_{10}(1-x)$. The variable $\ell$ on the horizontal axis relates to the definition of the coarse states, cf.\@ equation~\eqref{eq: shifted coarse states}. The spectral radius and the norm bound are indistinguishable in this figure.}
  \label{fig: reversible 1D bound comparison}
  \begin{center}
    \includegraphics{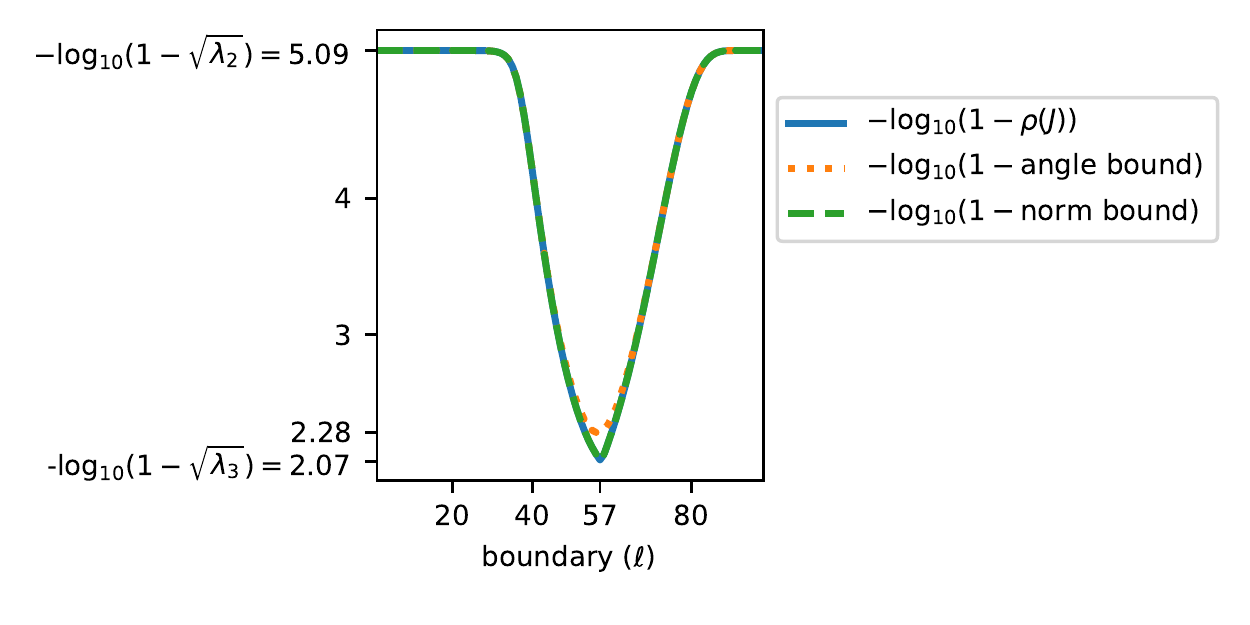}
  \end{center}
\end{figure}


\subsection{IAD for Irreversible Chains on a One-Dimensional Grid}
\label{subsec: iad one d irreversible}
We now consider irreversible perturbations of the last process.
Define 
\begin{equation*}
P_\alpha =(1- \alpha)   P  + \alpha  W,
\end{equation*}
where $P$ is as above, $W$ is the right shift matrix~\eqref{eq: right shift matrix}, and $\alpha \in [0,1]$. Let $\mu_\alpha$ be the steady-state of $P_\alpha$. We compute $\rho(J(\mu_\alpha))$ and the upper bounds for the families of coarse states defined above in Section~\ref{subsec: 1d computations}. We will see that our bounds are not as precise for irreversible chains as reversible chains. For $\alpha=0.05$, our bounds overestimate the true rate of convergence, but correctly predict the dependence of the rate on the choice of coarse states. For $\alpha=0.15$, our bounds do not seem to yield any useful information about the dependence of the rate of convergence on the coarse states. However, we note that $P_{0.15}$ is not metastable: we have $\rho(\hat P_{0.15})\approx 0.99$ compared with $\rho(\hat P) \approx 0.99999$, cf.\@ Table~\ref{tab: rates of convergence as function of alpha}. Therefore, one does not need a sophisticated method like IAD to estimate the steady-state of a kernel like $P_{0.15}$, so $P_{0.15}$ is not of much interest as a test case for IAD. We include results for $\alpha=0.15$ simply to illustrate the limitations of our theory.

In Figure~\ref{fig: refinement study}, we report the maximum of $\rho(J(\mu_\alpha))$ over all $l=0, \dots, \lfloor 100/n\rfloor$ for $n=1, \dots, 20$. We see a clear decreasing trend with a growing number of coarse states. However, note that the spectral radius does not decrease monotonically with the number of coarse states for $\alpha = 0.15$. In particular, for some choices of coarse states with $n=2$, $\rho(J(\mu_{0.15}))$ is larger than $\rho(\hat P_{0.15})$. For such poor choices of coarse states, IAD would converge more slowly than the power method.  

In Figure~\ref{fig: slightly irreversible shift study}, we report $\rho(J(\mu_\alpha))$ and the upper bounds for the family of shifted coarse states~\eqref{eq: shifted coarse states} for $\alpha=0.05$. We report the steady-state $\mu_\alpha$ and the three leading left eigenvectors of $P_\alpha$ in Figure~\ref{fig: slightly irreversible shift study}. Note the similarity with the eigenvectors of $P_0$ in Figure~\ref{fig: reversible 1D slow modes}. 
Our upper bounds correctly predict the dependence of $\rho(J(\mu_\alpha))$ on $\ell$. However, note that when $\ell$ is far from the optimal value, $\rho(J(\mu_\alpha))$ is almost the same as $\rho(\hat P_\alpha)$, which is the asymptotic rate of convergence of the power method. Our estimates in Theorem~\ref{thm: estimate of rate of convergence with spectrum and angle involved} predict the slower rate of convergence $\sqrt{\lambda_2}$, which is the contraction constant of the power method in $\ell^2(1/\mu)$. Recall that all of our upper bounds on $\rho(J(\mu))$ are in fact upper bounds on $\lVert (I - \Pi(\mu)) J(\mu) \rVert_{1/\mu}$. Note that although our bounds are generally quite close to $\lVert (I - \Pi(\mu)) J(\mu) \rVert_{1/\mu}$, they sometimes significantly overestimate $\rho(J(\mu))$, cf.\@ the discussion after the statement of Theorem~\ref{thm: estimate of rate of convergence with spectrum and angle involved}.  

In Figure~\ref{fig: moderately irreversible shift study}, we report $\rho(J(\mu_\alpha))$ and the upper bounds for the family of shifted coarse states~\eqref{eq: shifted coarse states} for $\alpha=0.15$. 
Here, our upper bounds do not seem to yield any useful information about the dependence of the convergence rate on the choice of coarse states. We propose that more precise estimates based on the exact formula for the spectral radius given in Theorem~\ref{thm: exact spectrum of J} could be developed to understand the rate of convergence in this case. We leave this for future work. Note also that for some values of $\ell$, we have $\rho(J(\mu_\alpha)) > \rho(\hat P_\alpha)$, which indicates that IAD would converge more slowly than the power method.

\begin{figure}[h]
  \label{fig: slightly irreversible 1d slow modes}
  \caption{Left: The steady-state distribution $\mu_\alpha$ of $P_\alpha$ for $\alpha=0.05$. For comparison, we have included the discrete Boltzmann distribution $\mu$ as well. Right: The first three left eigenvectors of $P^{\ast,1/{\mu_\alpha}}_\alpha P_\alpha$ for $\alpha=0.05$.}
  \begin{center}
    \includegraphics{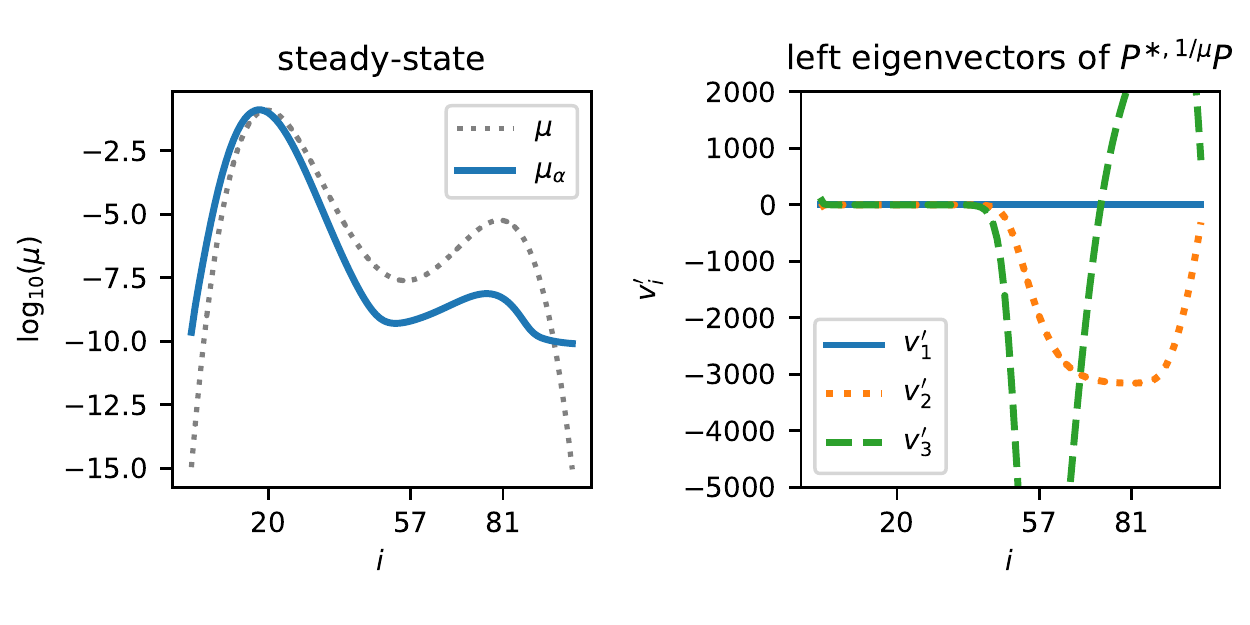}
  \end{center}
\end{figure}

\begin{figure}[h]
  \label{fig: slightly irreversible shift study}
  \caption{The spectral radius $\rho(J(\mu_\alpha))$, the norm upper bound~\eqref{eq: norm upper bound on rho}, and the angle upper bound~\eqref{eq: angle upper bound on rho} with $k=2$ for the irreversible, one-dimensional chain $P_\alpha$ with $\alpha=0.05$. Each of these numbers $x$ is very close to one for all values of $\ell$, so we report $-\log_{10}(1-x)$. The variable $\ell$ on the horizontal axis relates to the definition of the coarse states, cf.~\eqref{eq: shifted coarse states}.}
  \begin{center}
    \includegraphics{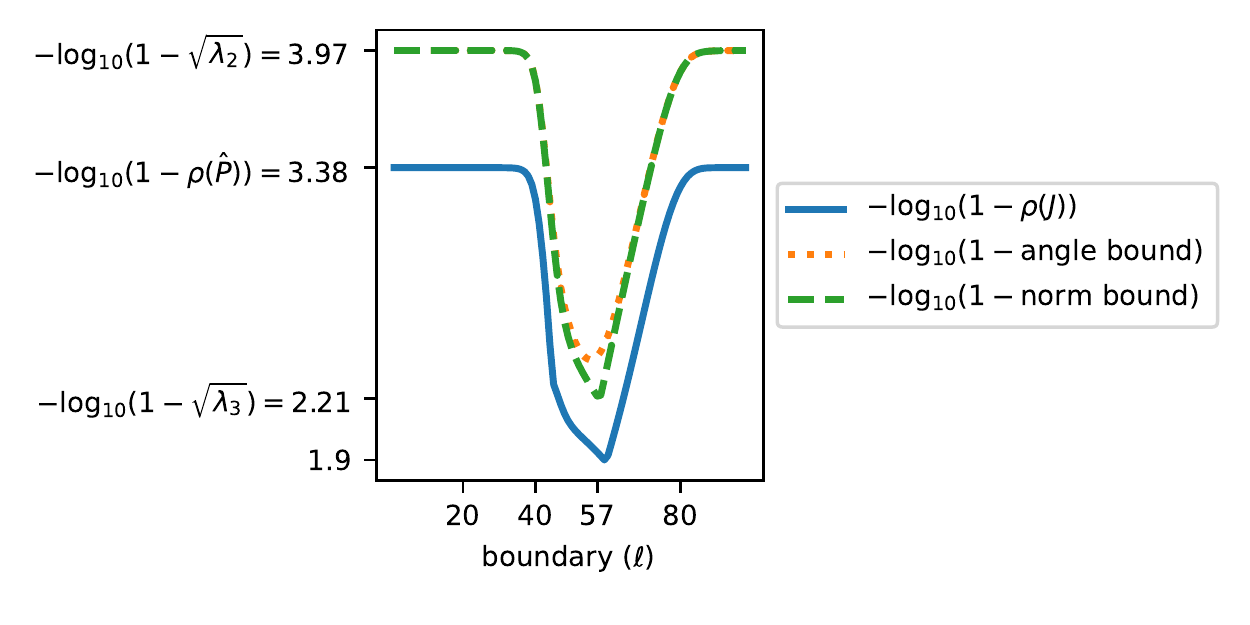}
  \end{center}
\end{figure}   


\begin{figure}[h]
  \label{fig: moderately irreversible shift study}
  \caption{The spectral radius $\rho(J(\mu_\alpha))$, the norm upper bound~\eqref{eq: norm upper bound on rho}, and the angle upper bound~\eqref{eq: angle upper bound on rho} with $k=2$ for the irreversible, one-dimensional chain $P_\alpha$ with $\alpha=0.15$. Each of these numbers $x$ is very close to one for all values of $\ell$, so we report $-\log_{10}(1-x)$. The variable $\ell$ on the horizontal axis relates to the definition of the coarse states, cf.~\eqref{eq: shifted coarse states}.}
  \begin{center}
    \includegraphics{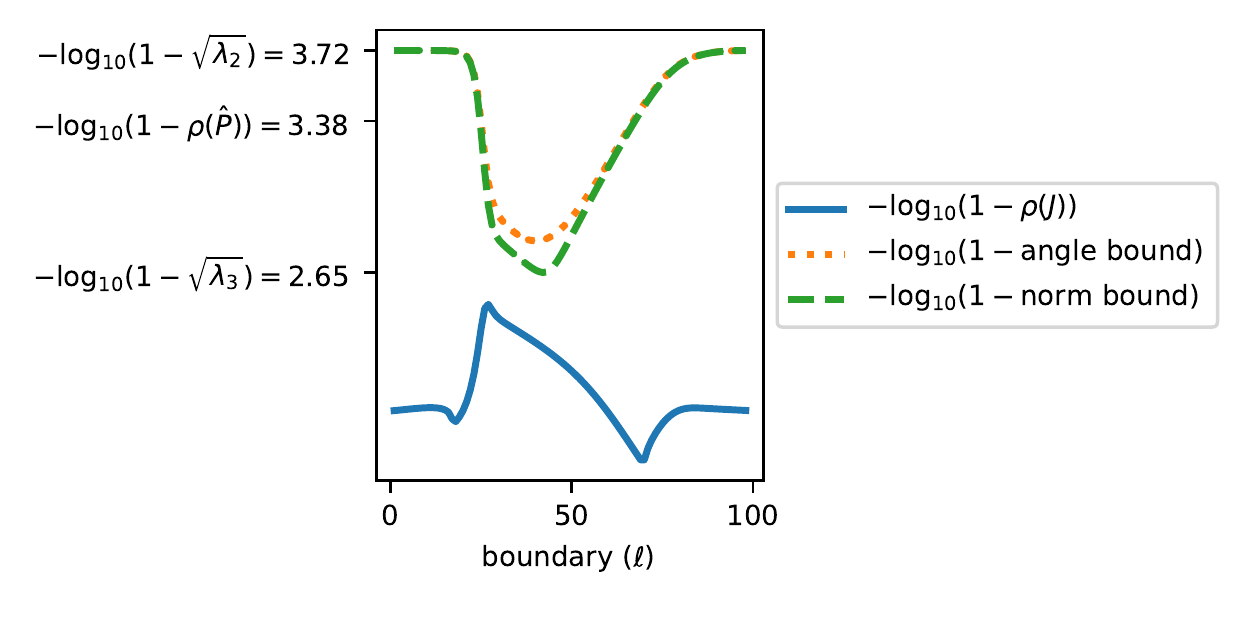}
  \end{center}
\end{figure}

\begin{table}[h]
  \label{tab: rates of convergence as function of alpha}
  \caption{The spectral radius $\rho(\hat P_\alpha)$ for $\alpha = 0, 0.05, 0.15$. Note that for $\alpha=0$, $P_\alpha$ is simply the reversible chain defined in Section~\ref{subsec: 1d computations}. Observe that for $\alpha =0.15$, the chain converges very quickly compared with $\alpha=0$. }
  \begin{tabular}{lll}
    \toprule
    $\alpha$ & $\rho(\hat P_\alpha)$ & $-\log_{10}(1- \rho(\hat P_\alpha))$ \\
    \midrule
    $0$ & $0.999992$ & $5.09$ \\
    $0.05$ & $0.999581$ & $3.38$ \\
    $0.15$ & $0.989564$ & $1.98$\\
    \bottomrule
  \end{tabular}
\end{table}

\subsection{IAD for a Metastable Chain on a Two-Dimensional Grid}
\label{subsec: 2d computations}

We now consider a metastable chain on a two-dimensional grid. Molecular models and other models in computational chemistry usually involve stochastic processes on spaces having thousands or millions of dimensions. Of course, when the dimension is so high, one cannot expect to cover space by a uniform grid of coarse states. Therefore, in many sampling strategies, one chooses a low-dimensional coordinate to discretize. We demonstrate for a model two-dimensional system that one can attain a significant reduction in the rate of convergence by discretizing a single variable into a small number of coarse states. 

Define $V: \Real^2 \rightarrow \Real$ by
\begin{equation*}
  \begin{split}
    V(x,y) &= 3 \exp \left (-x^2 -  \left (y-\frac13 \right  )^2\right) - 3 \exp  \left (-x^2 -  \left  (y^2 - \frac53 \right)^2\right) \\ &\qquad - 5 \exp  \left (- \left (x-1 \right)^2-y^2\right) - 5 \exp  \left (- \left (x+1\right)^2-y^2\right).
  \end{split}
\end{equation*}
See Figure~\ref{fig: park potential contour plot}.
This simple potential function was proposed for a study of reaction rates in~\cite{park_reaction_2003}. Let $\mu$ be the discrete Boltzmann distribution on a two-dimensional grid with $V$ as above and with $[a,b] \times [c,d] = [-1.7,1.7] \times [-1.7,2]$, $N=50$, and $T=1/4$. Let $P$ be the corresponding reversible Markov chain. The definitions of the discrete Boltzmann distribution and the reversible dynamics are analogous to the one-dimensional case. See Appendix~\ref{apx: 2d chain definition} for details. We report the largest $5$ eigenvalues of $P$ in Table~\ref{tab: reversible 2-d eigenvalues}, and we display the left eigenvectors $v'_2$ and $v'_3$ in Figures~\ref{fig: park potential v2} and~\ref{fig: park potential v3}. 

\begin{figure}[h]
  \label{fig: park potential contour plot}
  \caption{A Contour map of the potential function $V$. Note the two global minima on the left and right and the local minimum along a path betweeen the other two at the top.}
  \begin{center}
    \includegraphics{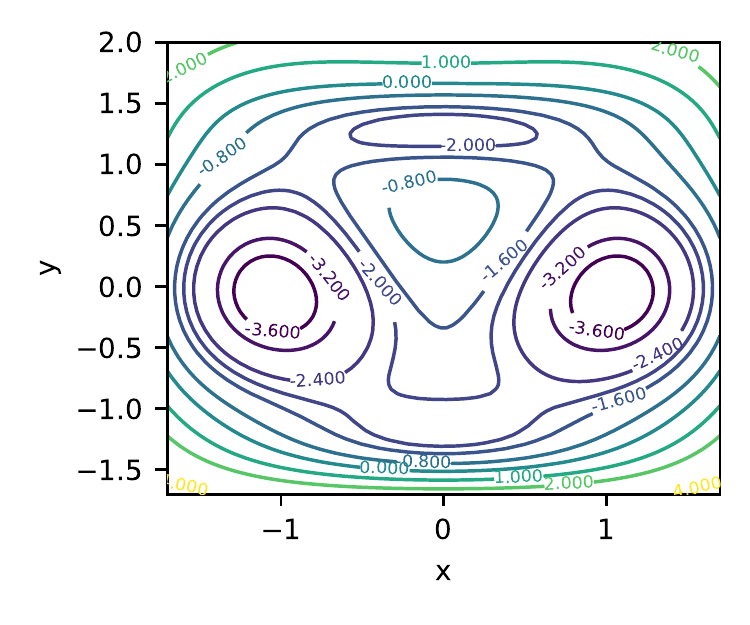}
  \end{center}
\end{figure}

Define the coarse states
\begin{equation}
  \label{eq: 1d grid strata}
 S_I = \left \{ (i,j)\in \{1, \dots, N\}^2: \left \lfloor \frac{i}3 \right \rfloor = j \right \} \text{ for } I =0,1,2.
\end{equation}
Here, $P$ is a Markov chain on the state space $\{1, \dots, 50 \} \times \{1, \dots, 50\}$, and the coarse states discretize only the first variable, not the second. The outlines of the coarse states are visible in the left panel of Figure~\ref{fig: park potential v2}. We report $\rho(J(\mu))$, the norm upper bound~\eqref{eq: norm upper bound on rho}, and the angle upper bound~\eqref{eq: angle upper bound on rho} for $k=2$ and $k=3$ for this choice of coarse states in Table~\ref{tab: reversible 2-d bounds and spectral radius}. Note that $\sin^2(\theta) \approx 0.002$ is quite small. We display the eigenvector $v'_2$ and its best approximation in the $\ell^2(\mu)$-norm in Figure~\ref{fig: park potential v2}. Although the two vectors may not appear to be aligned, they are in fact very close in the $\ell^2(\mu)$-norm since the regions where they differ have very small probability under $\mu$ and the maximum size of the difference between the vectors is not large in comparison to the very small probability. Observe that in this case, even with very few coarse states that discretize only a single dimension, we have in effect eliminated the largest eigenvalue $\sqrt{\lambda_2}$, and the rate of convergence of IAD is essentially equal to $\sqrt{\lambda_3}$. Note that $\sin^2(\theta)$ is large for $k=3$ in this case, so $v'_3$ is not well-approximated by a vector that is constant on the coarse states, cf.\@ Figure~\ref{fig: park potential v3}.

\begin{figure}[h]
  \label{fig: park potential v2}
  \caption{Right: The second left eigenvector $v'_2$ of $P^{\ast,1/\mu} P$. Left: The best approximation $\Pi(\mu)^\t v'_2$ of $v'_2$ in the $\ell^2(\mu)$-norm by a function that is constant on the one-dimensional grid of coarse states~\eqref{eq: 1d grid strata}. Note that this figure could also be interpreted as a depiction of the coarse states themselves.}
  \begin{center}
    \includegraphics{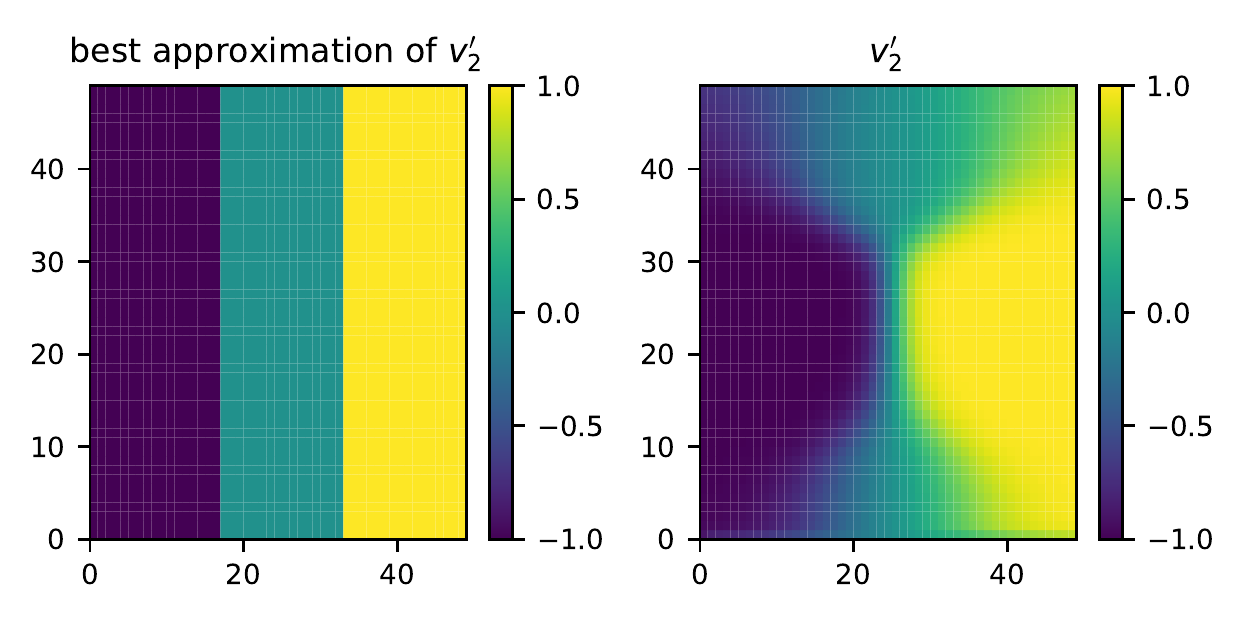}
  \end{center}
\end{figure}

\begin{table}[h]
  \label{tab: reversible 2-d eigenvalues}
    \caption{The largest five eigenvalues of $P^{\ast, 1/\mu} P$ for the reversible, two-dimensional chain $P$ of Section~\ref{subsec: 2d computations}. We report $\sqrt{\lambda_k}$ instead of $\lambda_k$, since it is $\sqrt{\lambda_k}$ that appears in Theorem~\ref{thm: estimate of rate of convergence with spectrum and angle involved}.}
  \begin{tabular}{lll}
    \toprule
    $k$ & $\sqrt{\lambda_k}$ \\
    \midrule
    $1$ & $1$ \\ 
    $2$ & $0.999997$ \\ 
    $3$ & $0.999488$ \\
    $4$ & $0.997511$ \\
    $5$ & $0.994219$ \\
          \bottomrule
  \end{tabular}
\end{table}

\begin{table}[h]
  \label{tab: reversible 2-d bounds and spectral radius}
  \caption{
  The spectral radius $\rho(J(\mu))$, the norm upper bound~\eqref{eq: norm upper bound on rho}, and the angle upper bound~\eqref{eq: angle upper bound on rho} together with $\sin^2(\theta)$ for $k=2,3$. The first column is for the one-dimensional grid~\eqref{eq: 1d grid strata} and the second column is for the six-by-six, two-dimensional grid~\eqref{eq: 2d grid strata}}
  \begin{tabular}{lllll}
    \toprule
    & 1-d Grid & 2-d Grid \\
    \midrule
    $\rho(J(\mu))$ & $0.999410$ & $0.987327$ \\
    norm bound  & $0.999410$ & $0.987327$ \\
    $\sin^2(\theta)$, $k=2$ & $0.002382$ & $0.000143$ \\
    angle bound, $k=2$ &  $0.999650$ & $0.999502$\\
    $\sin^2(\theta)$, $k=3$ & $0.854170$ & $0.031745$ \\
    angle bound, $k=3$ &  $0.999997$ & $0.999920$ \\
    \bottomrule
  \end{tabular}
\end{table}

We now test a six-by-six, two-dimensional grid of coarse states that refines the one-dimensional grid of three coarse states used above. For all $I,J \in \{0, \dots, 5\}$, we let
\begin{equation}
  \label{eq: 2d grid strata}
  S_{IJ} = \left  \{(x,y) \in \{1, \dots, N\}^2 : \left \lfloor \frac{x}6 \right \rfloor = I \text{ and } \left \lfloor \frac{y}6 \right \rfloor = J \right \}.
\end{equation}
One can see the outlines of some of these coarse states in the left panel of Figure~\ref{fig: park potential v3}. We report $\rho(J(\mu))$ and the upper bounds for this choice of coarse states in Table~\ref{tab: reversible 2-d bounds and spectral radius}. Note that both the spectral radius and the upper bounds are smaller, as expected for a refinement of the coarse states given the discussion following Corollary~\ref{cor: estimate of rate in reversible case}. For our six-by-six grid, $\sin^2(\theta)$ for $k=3$ is much smaller than for the one-dimensional grid. However, it is not small enough that the angle upper bound for $k=3$ is actually lower than for $k=2$. In fact, the interpolation between $\sqrt{\lambda_{k+1}}$ and $\sqrt{\lambda_2}$ in~\eqref{eq: angle upper bound on rho} is very steep, and one needs a very small value of $\sin^2(\theta)$ for the upper bound to approximate $\sqrt{\lambda_{k+1}}$. Nonetheless, refining the set of coarse states reduces $\rho(J(\mu))$ significantly, and this is correctly predicted by the norm upper bound~\eqref{eq: norm upper bound on rho}.

\begin{figure}[h]
  \label{fig: park potential v3}
  \caption{Right: The third left eigenvector $v'_3$ of $P^{\ast,1/\mu} P$. Left: The best approximation $\Pi(\mu)^\t v'_3$ of $v'_3$ in the $\ell^2(\mu)$-norm by a function that is constant on the two-dimensional grid of coarse states~\eqref{eq: 1d grid strata}.}
  \begin{center}
    \includegraphics{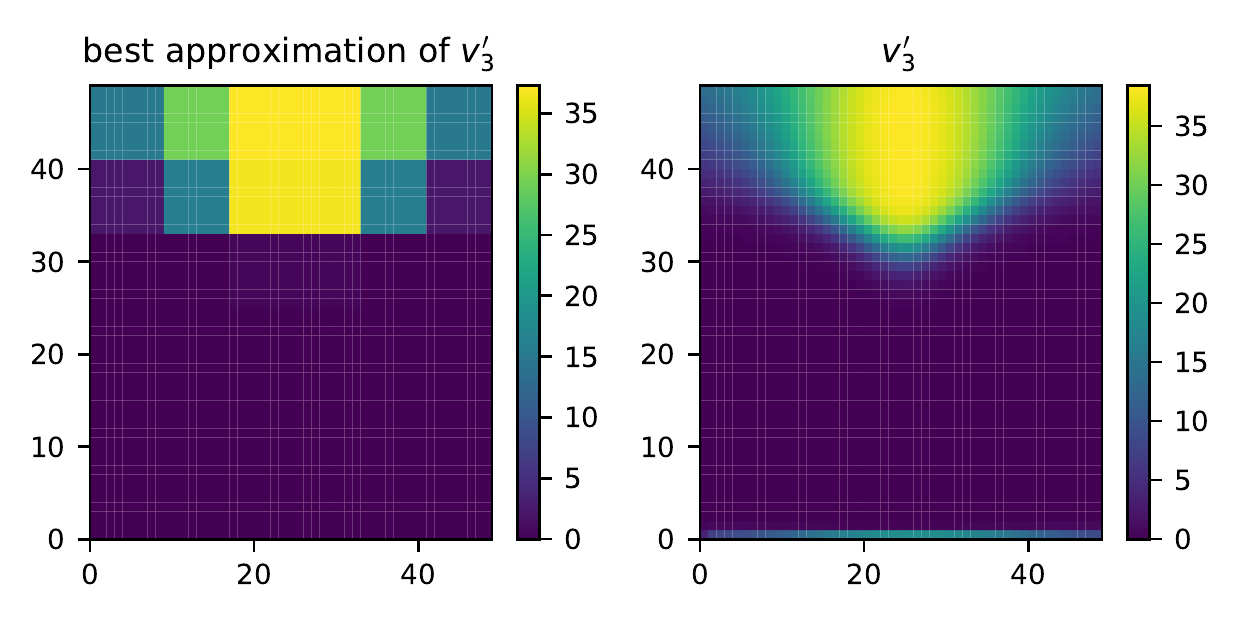}
  \end{center}
\end{figure}

\section{Conclusion}

Our work here was motivated by a desire to understand the robustness and efficiency of methods such as nonequilibrium umbrella sampling (NEUS)~\cite{warmflash_umbrella_2007}, exact milestoning~\cite{bello-rivas_exact_2015}, and injection measures~\cite{earle_convergence_2022} for calculating nonequilibrium (or equilibrium) steady-states in statistical physics. We have studied IAD as a simple model of this class of methods. We explain why it may be possible to use methods similar to IAD to efficiently compute steady-states of molecular models and how one might choose the coarse states in practice to optimize efficiency. For reversible processes, we conclude that one should choose coarse states so that the leading left eigenvectors of $P$ are well-approximated in the $\ell^2(\mu)$-norm by vectors that are constant on the coarse states. Since error is measured in the $\ell^2(\mu)$-norm, regions of low probability will not have a significant influence on the approximation quality unless some of the leading eigenvectors are concentrated in those regions. This means that in some cases a very na\"ive choice of coarse states can be efficient. For irreversible processes, our conclusions are similar but not so definite. For some very irreversible processes, our upper bounds do not yield much information about the dependence of the asymptotic rate of convergence on the choice of strata.
Although our primary interests lie in statistical physics, our results are general, and we hope others will apply them to understand the performance of IAD in other contexts. 

Our work does not address all important points. We show only local convergence, not global. We focus primarily on estimates of the asymptotic rate of convergence, ignoring preasymptotic phenomena. We recall that NEUS and similar methods are stochastic evolving particle systems that approximate IAD; we do not consider issues related to the particle approximation. We leave these points for future work.

\appendix

\section{Computing the Coarse Steady-State}
\label{apx: computing coarse distribution}

We compute the coarse steady-state $z(C(\mu^k))$ by the following algorithm.

\begin{algorithm}
  Assume that $C(\mu^k)$ is irreducible as guaranteed by Lemma~\ref{lem: well-posedness}. The user must specify an error tolerance $\tau>0$ and an exponent $k \in \mathbb{N}$.  In our numerical experiments in Sections~\ref{subsec: 1d computations} and~\ref{subsec: 2d computations}, we take $\tau = 10^{-9}$ and $k = 2^{15}$. We compute $z(C(\mu^k))$ by the following procedure:
  \begin{enumerate}
  \item  Set $\bar C = \frac12(I + C(\mu^k))$. Use the algorithm in~\cite{golub_using_1986} to compute an initial approximation $\tilde z$ to the steady-state $z(C(\mu^k))$ from the $QR$-factorization of $I-\bar C$.
  \item Refine the initial approximation $\bar z$ using a power method: 
    \begin{enumerate}
     \item 
      Set $z^{\text{old}} = \bar z$.
    \item
      Calculate
      \begin{equation*}
        z^{\text{new}} = \bar C^k z^{\text{old}}.
      \end{equation*}
     \item 
      If
      \begin{equation*}
        \max_{i =1, \dots, n} \frac{\lvert z^{\text{new}}_i  - C z^{\text{new}}_i \rvert }{z^{\text{new}}_i} < \tau \text{ and } \frac{\lvert z^{\text{new}}_i  - z^{\text{old}}_i \rvert }{z^{\text{old}}_i} < \tau
      \end{equation*}
      then return $z^{\text{new}}$ to the user. Otherwise, set $z^{\text{old}} = z^{\text{new}}$ and go to step~(b) above.
    \end{enumerate}
  \end{enumerate}
\end{algorithm}

In practice, we find that direct methods (like the algorithm in~\cite{golub_using_1986}) for calculating the coarse-steady state are often not sufficiently accurate. When direct methods produce an inaccurate result, applying a few power method iterations has usually produced a much better estimate of the steady-state. 

\section{Well-posedness of IAD}

Here, we prove that IAD is well-posed under our assumptions, and we give some examples to illustrate what can go wrong when our assumptions do not hold. 

\subsection{Proof of Lemma~\ref{lem: well-posedness}}
\label{apx: well-posedness}

We show that IAD is well-posed if $P$ is irreducible and $\mu^0 >0$. 

\begin{proof}
  First, we show that if $P$ is irreducible and $\nu >0$, then $C(\nu)$ is irreducible.
   We prove the contrapositive, showing that if $\nu >0$ and $C(\nu)$ is reducible, then $P$ is reducible. If $\nu >0$ and $C(\nu)$ is reducible, then there is a partition of the coarse states
  \begin{equation*}
    \{1, \dots, n\}= A \cup B
  \end{equation*}
  into disjoint and nonempty sets $A$ and $B$ so that for all $a \in A$ and $b\in B$
  \begin{equation*}
    C(\nu)_{ab} = A P D(\nu)_{ab} =  \sum_{\substack{i \in S_a \\ j \in S_b}} \frac{\nu_i}{A\nu_a} P_{ji} = 0. 
  \end{equation*}
  Therefore, since $\nu >0$, $P_{ji} =0$ for all $i \in S_a$ and $j \in S_b$. Now define
  \begin{equation*}
    \mathcal{A} = \cup_{a \in A} S_a \text{ and } \mathcal{B} = \cup_{b \in B} S_b.
  \end{equation*}
  The sets $\mathcal{A}$ and $\mathcal{B}$ are a partition of $\Omega$ into disjoint and nonempty sets, and we have $P_{ji} =0$ for any $i \in \mathcal{A}$ and $j \in \mathcal{B}$. It follows that $P$ must be reducible. We conclude that if $\nu >0$ and $P$ is irreducible, then $C(\nu)$ must be irreducible. 

  Now we show that if $\mu^k >0$, then $\mu^{k+1} >0$. To verify that $\mu^{k+\frac12}>0$, we observe that $C(\mu^k)$ is irreducible when $\mu^k >0$ by the previous paragraph, and so the steady-state $z(C(\mu^k))$ is unique and positive by the Perron--Frobenius theorem. It follows that $\mu^{k+\frac12} = D(\mu^k) z(C(\mu_k))>0$. Moreover, when $P$ is irreducible, $\nu >0$ implies $P \nu >0$. To see this, observe that if $\nu >0$, then for any $i \in \Omega$ with $P \nu_i =0$, $P_{ij} =0$ for all $j \neq i$. Thus, $P$ is reducible if $P \nu$ is not positive. Therefore, $\mu^{k+1} = P \mu^{k + \frac12} >0$ if $\mu^k >0$, and by induction $\mu^0 >0$ implies $\mu^k >0$ for all $k \in \mathbb{N}$. This concludes the proof that the iterates $\mu^k$ are well-defined. 
\end{proof}

\subsection{Examples motivating our assumptions}
\label{apx: pathological examples}

We now give some pathological examples to motivate our assumptions that $\mu^0 >0$ and that $P^\t P$ is irreducible.

\subsubsection{Positive initial condition ($\mu^0 > 0$)} We assume $\mu^0 >0$, since if $\mu^0$ is not positive, then $C(\mu^0)$ may be reducible even when $P$ and $P^\t P$ are irreducible and aperiodic. 
For example, consider the chain on $\Omega=\{1,2,3\}$ with transition matrix
\begin{equation*}
  P=
  \begin{pmatrix}
    0 &\frac13 &0 \\
    1 &\frac13 &1 \\
    0 &\frac13 &0
  \end{pmatrix}.
\end{equation*}
Here, $P$ and $P^\t P$ are irreducible and aperiodic. 
Now define the coarse states
\begin{equation*}
  S_1 = \{1,2\} \text{ and } S_2 = \{3\}.
\end{equation*}
For $\mu^0 = \frac12 (\delta_1 + \delta_3) = (\frac12, 0, \frac12)^\t$, we have
\begin{equation*}
 C(\mu^0) =  A P D (\mu^0) =
  \begin{pmatrix}
    1 &1 \\
    0 &0 \\
  \end{pmatrix},
\end{equation*}
which is reducible.

\subsubsection{$P^\t P$ irreducible} We assume that $P^\t P$ is irreducible to prove local convergence of IAD. It was observed in~\cite[Example 2]{marek_note_2006} that IAD is not locally convergent for
\begin{equation*}
  P =
  \begin{pmatrix}
    0 & 1 & 0 & \frac12 \\
    \frac12 & 0 & 0 & 0 \\
    \frac12 & 0 & 0 & \frac12 \\
    0 & 0 & 1 & 0
  \end{pmatrix}
\end{equation*}
with the strata $S_1 = \{1,2\}$ and $S_2 = \{3, 4\}$. 
However, $P$ is irreducible and aperiodic, so the Markov chain with transition matrix $P$ is convergent. The reader will note that $P^\t P$ is reducible, so this is an example where the power method is convergent but not a strict contraction in $\ell^2(1/\mu)$.

\section{Proofs of results stated in Section~\ref{subsec: power method}}

\subsection{Proof of Lemma~\ref{lem: time reversal}}
\label{apx: time reversal}

We begin with a proof of Lemma~\ref{lem: time reversal}, which shows that the time reversal of $P$ is its adjoint $P^{\ast, 1/\mu}$.

\begin{proof}[Proof of Lemma~\ref{lem: time reversal}]
 For simplicity, we write $\langle, \rangle$ for $\langle, \rangle_{1/\mu}$ and $P^\ast$ for $P^{\ast, 1/\mu}$. Note that for any $x, y \in \Real^N$, we have 
   \begin{align*}
     \langle x, P y \rangle &= \langle x, \diag(1/\mu) P y \rangle_{\1} \\
                                    &=  \langle \diag(\mu) P^\t \diag(1/\mu) x, \diag(1/\mu)  y \rangle_{\1} \\
                                    &=  \langle \diag(\mu) P^\t \diag(1/\mu) x,  y \rangle,
   \end{align*}
   so
   \begin{equation}\label{eqn: formula for P ast}
     P^\ast =  \diag(\mu) P^\t \diag(1/\mu).
   \end{equation}
   Therefore, $P^\ast_{ij} = \frac{P_{ji}\mu^j}{\mu_i}$, which is exactly the time-reversal of $P$. See~\cite[{Theorem~1.9.1}]{norris_markov_1998} for the definition of the time-reversal and its properties.  (Remember that $P$ is \emph{column} stochastic, so the time reversal here takes a slightly different form than for the row stochastic matrices of~\cite{norris_markov_1998}.) The time-reversal $P^\ast$ is column stochastic and has the same invariant distribution $\mu$ as $P$, since time-reversals always have these properties.
\end{proof}

\subsection{Proof of Lemma~\ref{lem: convergence of power method}}
\label{apx: power method}

We now prove convergence of the power method when both $P$ and $P^\t P$ are irreducible. The condition $P^\t P$ implies that the stochastic matrix $P^{\ast, 1/\mu} P$ is irreducible, and this is the essential fact in our proof that the power method is strictly contracting in the $\ell^2(1/\mu)$-norm.

\begin{proof}[Proof of Lemma~\ref{lem: convergence of power method}]
  To simplify notation, for any $M \in \Real^{N \times N}$, we write $M^\ast$ for $M^{\ast, 1/\mu}$ and $M^\t$ for the transpose. We let $\lVert \cdot \rVert$ and $\langle , \rangle$ denote the $\ell^2(1/\mu)$-norm and inner product.
 Since $P$ is column stochastic and $\nu^0$ is a probability vector, $\nu^k$ is a probability vector for all $k\in \mathbb{N}$, so $\1^\t \nu^k=1$. Therefore,
  \begin{align*}
    \nu^{k+1} - \mu &= P \nu^k - \mu \\
                    &= (P - \mu \1^\t) \nu^k \\
                    &= (P - \mu \1^\t ) (\nu^k - \mu) \\
                    &= \hat P (\nu^k - \mu).
  \end{align*}
   Note that the third equality above follows since $P\mu = \mu$ and $\mu$ is a probability vector.

   We now observe that for any $M \in \Real^{ N \times N}$, we have
 \begin{equation}
   \label{eq: norm in terms of spectrum}
     \lVert M \rVert = \rho (M^\ast M)^\frac12 = \lVert M^\ast M \rVert^\frac12.
   \end{equation}
   In particular, $\lVert \hat P \rVert = \rho (\hat P^\ast \hat P)^\frac12 = \lVert \hat P^\ast \hat P \rVert^\frac12$.
   The analogous result
   \begin{equation*}
     \lVert M \rVert_\1 = \rho (M^\t M)^\frac12 = \lVert M^\t M \rVert_{\1}^\frac12
   \end{equation*}
   for the uniformly weighted $\ell^2(\1)$-inner product is well-known, and the standard proof generalizes to any inner product space. Therefore, it will suffice to show that $\rho (\hat P^\ast \hat P)^\frac12 < \sqrt{\lambda_2}$. Observe that
  \begin{align*}
    \hat P^\ast \hat P &= (P^\ast -\mu \1^\t )(P - \mu \1^\t) \\
                       &= P^\ast P - P^\ast \mu \1^\t - \mu \1^\t P + \mu \1^\t \\
    &= P^\ast P -   \mu \1^\t,
  \end{align*}
  so  $\rho (\hat P^\ast \hat P)= \lambda_2$ by the diagonalization~\eqref{eq: diagonalization} of $P^\ast P$. Thus, $\lVert \hat P \rVert = \sqrt{\lambda_2}$. 

  By~\eqref{eqn: formula for P ast}, $P^\ast P$ is irreducible if and only if $P^\t P$ is irreducible. Moreover, $\lVert \hat P \rVert = \sqrt{\lambda_2} <1$ if $P^\ast P$ is irreducible by the Perron--Frobenius theorem, cf.\@\cite[Section~8.3, pg.\@ 673]{meyer_matrix_2008}. Therefore, $P^\t P$ irreducible is a sufficient condition for $\lVert \hat P \rVert <1$.  To see that it is also a necessary condition, we will prove that  if $P^\t P$ is reducible, then $\lVert \hat P \rVert \geq 1$. If $P^\t P$ is reducible, then there exist nonempty, disjoint subsets $A$ and $B$ of $\Omega$ so that $P^t P_{ij} = 0$ for all $i \in A$ and $j \in B$. Since $P^\t P$ is symmetric, we also have $P^\t P_{ij}=0$ for $i \in B$ and $j \in A$. Therefore, $P^\ast P$ admits a block decomposition of the form
  \begin{equation*}
    P^\ast P = 
    \bordermatrix{
      ~ &A & B \cr
      A & P_1 & 0 \cr
      B & 0 & P_2},
  \end{equation*}
  where $P_1$ and $P_2$ are both column stochastic matrices. Let $v_1$ and $v_2$ be steady-state probability vectors of $P_1$ and $P_2$, respectively. Define $V_1 = (v_1, 0)^\t$ and $V_2 =(0,v_2)^\t$. We have $V_1 - V_2 \neq 0$, and 
  \begin{align*}
   \lVert \hat P (V_1 - V_2) \rVert^2 =  \lVert P (V_1 - V_2)  \rVert^2 = \langle P^\ast P (V_1 - V_2), (V_1 - V_2) \rangle = \lVert V_1 - V_2 \rVert^2, 
  \end{align*}
  which verifies $\lVert \hat P \rVert \geq 1$.
\end{proof}

\section{Proofs of results stated in Section~\ref{subsec: local convergence}}

\subsection{Proof of Lemma~\ref{lem: inverse formula for invariant distribution}}
\label{apx: inverse formula}

We begin with a proof of Lemma~\ref{lem: inverse formula for invariant distribution}, our reformulation of the steady-state eigenproblem $Q z(Q) = z(Q)$ as a linear system. 

\begin{proof}[Proof of Lemma~\ref{lem: inverse formula for invariant distribution}]
  Observe that $x=z(Q)$ solves
  \begin{equation}\label{eqn: first eqn in proof of inverse formula lemma}
    (I - Q + v w^\t) x = v w^\t z(Q),
  \end{equation}
  since $Qz(Q)=z(Q)$. If we can show that $x=z(Q)$ is the unique solution of~\eqref{eqn: first eqn in proof of inverse formula lemma}, then $I - Q + v w^\t$ is invertible and the result follows.

  Suppose to the contrary that $u\neq z(Q)$ also solves~\eqref{eqn: first eqn in proof of inverse formula lemma}. Then we have
  \begin{equation*}
    \1^\t (I - Q + v w^\t) u = \1^\t v w^\t u = \1^\t v w^\t z(Q), 
  \end{equation*}
  since $\1^\t (I-Q) =0$ for any column stochastic $Q$. Therefore, $w^\t u= w^\t z(Q)$  since we assume $\1^\t v \neq 0$. It follows, again by~\eqref{eqn: first eqn in proof of inverse formula lemma}, that
  \begin{equation*}
    (I-Q)u=0.
  \end{equation*}
  Moreover, since we assume $w^\t z(Q) \neq 0$, $u \neq z(Q)$ and $w^\t u= w^\t z(Q)$ imply
  \begin{equation*}
    u \notin \spn \{z(Q)\}.
  \end{equation*}

  Now recall that since $Q$ is irreducible, $z(Q) >0$ by the Perron--Frobenius theorem. Thus, for some $\eps >0$, $z(Q)+\eps u >0$, $z(Q)+\eps u \notin \spn\{z(Q)\}$, and
  \begin{equation*}
    Q(z(Q)+\eps u)= z(Q)+\eps u.
  \end{equation*}
  This contradicts uniqueness of the stationary distribution of $Q$, so $z(Q)$ is the unique solution of~\eqref{eqn: first eqn in proof of inverse formula lemma}, and $I - Q + v w^\t$ is invertible.
\end{proof}

\subsection{Proof of Lemma~\ref{lem: error propagation for coarse correction step}}
\label{apx: error propagation for coarse correction}

We prove our error propagation formula for the coarse correction step. 

\begin{proof}[Proof of Lemma~\ref{lem: error propagation for coarse correction step}]
  First, note that
  \begin{equation*}
    A (I - P + \mu \1^\t)  D(\nu)  = I  - C(\nu) + A\mu \1^\t
  \end{equation*}
  is invertible by Lemma~\ref{lem: inverse formula for invariant distribution}, since $C(\nu)$ is irreducible when $\nu >0$ by Lemma~\ref{lem: well-posedness}. 

  We prove formula~\eqref{eq: error after coarse step} for the error after the coarse correction. Using our reformulations~\eqref{eq: linear formulation of fine problem} and~\eqref{eq: linear formulation of coarse problem} of the steady-state problems, we have
  \begin{align*}
    \mu^{k+\frac12}  &= D(\mu^k) z(C(\mu^k)) \\
                     &= D(\mu^k) (I - C(\mu^k) + A\mu \1^\t)^{-1} A\mu \\
                     &= D(\mu^k) (A(I - P + \mu \1^\t)D(\mu^k))^{-1} A\mu \\
                     &= D(\mu^k) (A(I - P + \mu \1^\t)D(\mu^k))^{-1} A (I -  P + \mu\1^\t)\mu \\
                     &= S(\mu^k) \mu.
  \end{align*}
  Now since $D(\mu^k)\mu^k = \mu^k$, we have
  \begin{align*}
    S(\mu^k) \mu^k &= D(\mu^k) (A(I - P + \mu \1^\t)D(\mu^k))^{-1} A (I -  P + \mu\1^\t) \mu^k \\
                   &= D(\mu^k) (A(I - P + \mu \1^\t)D(\mu^k))^{-1} A (I -  P + \mu\1^\t) D(\mu^k) \mu^k \\
                   &= \mu^k.
  \end{align*}
  Therefore,
  \begin{equation*}
    \mu^{k+ \frac12} - \mu = (I - S(\mu^k))\mu = (I - S(\mu^k))(\mu^k-\mu), 
  \end{equation*}
  as desired.

 It remains to show that $S(\nu)$ is a projection on $\rg(D(\nu))$. For convenience, we write $S$ for $S(\nu)$, $D$ for $D(\nu)$, and $C$ for $C(\nu)$. 
  To see that $S$ is a projection, observe that
  \begin{align*}
    S^2 &= D [A(I - \hat P) D]^{-1} (A (I- \hat P) D) [A(I - \hat P) D]^{-1} A (I- \hat P) \\
        &= D [A(I - \hat P) D]^{-1} A (I- \hat P) \\
    &= S.
  \end{align*}
 To see that $\rg(S)=\rg(D)$, first observe that
  \begin{equation*}
     \rg ( A (I - \hat P)) = \rg(A) = \Real^n, 
   \end{equation*}
   since $(I - \hat P)$ is invertible by Lemma~\ref{lem: inverse formula for invariant distribution}.
   Therefore, since  $A (I - \hat P)  D = I - C + A\mu \1^\t$ is also invertible by Lemma~\ref{lem: inverse formula for invariant distribution}, 
   \begin{equation*}
     \rg ([A (I - \hat P)  D]^{-1} A (I - \hat P)) = \Real^n, 
   \end{equation*}
   and it follows that $\rg(S) = \rg(D)$.
\end{proof}

\subsection{IAD as an Algebraic Multigrid Method}
\label{apx: iad as multigrid}

Here, we explain that IAD is more-or-less an adaptive algebraic multigrid method. Roughly similar observations appear in~\cite{krieger_two-level_1995}. Recall that the steady-state $\mu$ is the unique solution $x$ of the linear system of equations
\begin{equation*}
  (I - P + \mu \1^\t) x = \mu, 
\end{equation*}
cf.~\eqref{eq: linear formulation of fine problem}.
Suppose that one were to try to solve this equation by algebraic multigrid with the restriction operator $A$. Typically, the prolongation operator would be the transpose of restriction. Suppose that instead one were to take an adjoint with respect to some non-uniform inner product. For example, let $\nu \in \Real^N$ be a positive probability vector, and define the prolongation operator to be the operator $A^\dagger \in \Real^{N \times n}$ satisfying
\begin{equation*}
  \langle A \eta, w \rangle_{1/A \nu} = \langle \eta, A^\dagger w \rangle_{1/\nu}
\end{equation*}
for all $w \in \Real^n$ and $\eta \in \Real^N$.
By Lemma~\ref{lem: A and D are adjoint to each other}, we have $A^\dagger = D(\nu)$. 

The coarse grid correction step for a multigrid method with restriction operator $A$ and prolongation $D(\nu)$ is
\begin{align*}
  \mu^{k+\frac12} &= \mu^k + D(\nu) (A(I- P + \mu \1^\t) D(\nu))^{-1} A(\mu - (I - P + \mu \1^\t)\mu^k) \\
                  &= \mu^k + D(\nu) (A(I- P + \mu \1^\t) D(\nu))^{-1} A(I - P + \mu \1^\t)(\mu - \mu^k) \\
  &= \mu^k - S(\nu) (\mu^k - \mu),
\end{align*}
where $S(\nu)$ is the coarse projection defined in Lemma~\ref{lem: error propagation for coarse correction step}. Here, the residual is $\mu - (I - P + \mu \1^\t)\mu^k$ and the coarse system matrix is $A(I- P + \mu \1^\t) D(\nu)$. Note that if one chooses $\nu = \mu^k$, the coarse grid correction is equivalent with the coarse correction step of IAD. After the coarse grid correction in a multigrid method, one computes several steps of a smoothing iteration, often using some version of the Jacobi or Gauss-Seidel method. In IAD, one performs a step of the power method, which corresponds to using $P$ as the smoothing matrix. 

Note that IAD is not a true multigrid method since the inner product and therefore the prolongation operator depend on the current approximation $\mu^k$ of $\mu$. Thus, IAD is nonlinear and the standard theory of multigrid methods does not apply. 

\subsection{Proof of Lemma~\ref{lem: properties of multigrid projection}}
\label{apx: properties of multigrid projection}

We prove that $\Pi(\nu)$ is an $\ell^2(\nu)$-orthogonal projection. We begin by showing that $A$ and $D(\nu)$ are adjoint in a certain sense.

\begin{lemma}
  \label{lem: A and D are adjoint to each other}
  For any $w \in \Real^n$ and $\eta \in \Real^N$, we have
  \begin{align*}
    \langle D(\nu) w, \eta \rangle_{1/\nu} = \langle w, A \eta \rangle_{1/A\nu}.
  \end{align*}
\end{lemma}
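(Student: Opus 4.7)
The plan is a direct unpacking of both sides of the claimed identity using the definitions of $A$, $D(\nu)$, and the weighted inner products. Since the statement is an algebraic identity with no convergence or well-posedness content, no clever manipulation should be required: one computes the left-hand side explicitly as a double sum over fine and coarse indices, interchanges the order of summation, and recognizes the resulting inner sum as $(A\eta)_i$.

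Concretely, I would begin with the left-hand side:
\begin{equation*}
  \langle D(\nu) w, \eta \rangle_{1/\nu} = \sum_{j=1}^N \frac{(D(\nu)w)_j\, \eta_j}{\nu_j}.
\end{equation*}
Using the definition $(D(\nu) w)_j = \sum_{i=1}^n w_i \,\nu_j \mathbf{1}_{S_i}(j) / (A\nu)_i$, the $\nu_j$ in the numerator cancels the $\nu_j$ in the denominator coming from $\langle\cdot,\cdot\rangle_{1/\nu}$. Swapping the order of summation then gives
\begin{equation*}
  \langle D(\nu) w, \eta \rangle_{1/\nu} = \sum_{i=1}^n \frac{w_i}{(A\nu)_i} \sum_{j \in S_i} \eta_j = \sum_{i=1}^n \frac{w_i (A\eta)_i}{(A\nu)_i},
\end{equation*}
where the second equality uses the definition $(A\eta)_i = \sum_{j \in S_i}\eta_j$. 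The final expression is exactly $\langle w, A\eta\rangle_{1/A\nu}$, closing the computation.

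There is no real obstacle here; the only thing to be careful about is the cancellation of the $\nu_j$ weight between $D(\nu)$ and the $1/\nu$ weight in the inner product, which is what makes the pairing with $1/(A\nu)$ on the coarse side come out correctly. The well-posedness prerequisite $A\nu > 0$ (so that $D(\nu)$ and the coarse weight $1/(A\nu)_i$ are defined) is already guaranteed whenever we invoke this lemma, by Lemma~\ref{lem: well-posedness}.
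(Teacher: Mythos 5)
Your proof is correct and is essentially the same direct computation as the paper's: expand $\langle D(\nu)w,\eta\rangle_{1/\nu}$ as a double sum, cancel the $\nu_j$ factors from $\tilde\mu(j\mid S_i)$ against the $1/\nu_j$ weight, interchange the order of summation, and recognize $(A\eta)_i$. No meaningful difference in approach.
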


\begin{proof}
  We have
  \begin{align*}
    \langle D(\nu) w, \eta \rangle_{1/\nu} &= \sum_{j=1}^N \sum_{k=1}^n w_k \nu(j \vert S_k) \eta_j \frac{1}{\nu_j} \\
                                           &= \sum_{j=1}^N \sum_{k=1}^n w_k \frac{\1_{S_k}(j)}{A\nu_k} \eta j \\
                                           &= \sum_{k=1}^n w_k \left ( \sum_{j = 1}^N \1_{S_k} (j) \eta_j \right ) \frac{1}{A \nu_k} \\
    &=  \langle w, A \eta \rangle_{1/A\nu}.
  \end{align*}
\end{proof}

We now prove Lemma~\ref{lem: properties of multigrid projection}, which verifies that $\Pi(\nu)$ is an orthgonal projection.

\begin{proof}[Proof of Lemma~\ref{lem: properties of multigrid projection}]
  First, note that
  \begin{equation*}
    A D(\nu) = I \in \Real^{N \times N}.
  \end{equation*}
  Therefore,
  \begin{equation*}
    \Pi(\nu)^2  = D (\nu) (A D(\nu)) A = D(\nu) A  = \Pi(\nu),
  \end{equation*}
  so $\Pi(\nu)$ is a projection.

  To see that $\Pi(\nu)$ is orthogonal, note that by Lemma~\ref{lem: A and D are adjoint to each other}, for any $\eta, \kappa \in \Real^N$, 
  \begin{align*}
    \langle \Pi(\nu) \eta, \kappa \rangle_{1/\nu} &=  \langle D(\nu)A \eta, \kappa \rangle_{1/\nu} \\
                                                  &=  \langle A \eta, A \kappa \rangle_{1/A\nu} \\
                                                  &=  \langle \eta, D(\nu) A \kappa \rangle_{1/\nu} \\
    &=  \langle  \eta, \Pi(\nu)\kappa \rangle_{1/\nu}.
  \end{align*}
  Therefore, $\Pi(\nu)^{\ast, 1/\nu} = \Pi(\nu)$ so $\Pi(\nu)$ is an orthogonal projection in $\ell^2(1/\nu)$. 

  Finally, observe that $\Pi(\nu) = D(\nu) A$ is column stochastic, since both $A$ and $D(\nu)$ map probability vectors to probability vectors. We have $\Pi(\nu)\nu  = \nu$ directly from the definition of $\Pi(\nu)$, and $\Pi(\nu)$ is reversible since it is self-adjoint with respect to the $\ell^2(1/\nu)$-inner product, cf.~Lemma~\ref{lem: time reversal}.
\end{proof}

\subsection{Proof of Lemma~\ref{lem: eps norm of J}}
\label{apx: proof of lemma on eps norm of J}

We begin our proof of Lemma~\ref{lem: eps norm of J} by computing the block decomposition of $J(\mu)$ with respect to $\Pi(\mu)$. The essential facts are summarized in the following simple lemma.

\begin{lemma}
  \label{lem: products of s and pi}
  We have
  \begin{equation*}
    \Pi(\mu) S(\mu) = S(\mu) \text{ and } S(\mu) \Pi(\mu) = \Pi(\mu).
  \end{equation*}
\end{lemma}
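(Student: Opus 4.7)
The plan is to observe that both identities reduce to a single algebraic fact: $\Pi(\mu)$ and $S(\mu)$ are projections onto the same subspace $\rg(D(\mu))$. By Lemma~\ref{lem: error propagation for coarse correction step}, $S(\mu)$ is a projection with $\rg(S(\mu)) = \rg(D(\mu))$; by Lemma~\ref{lem: properties of multigrid projection}, $\Pi(\mu)$ is the orthogonal projection on $\rg(D(\mu))$. In general, if $P_1, P_2$ are two projections on the same subspace $V$, then $P_1 P_2 = P_2$ and $P_2 P_1 = P_1$, since each one acts as the identity on $V$ and the image of the other lies in $V$. Specializing gives both claims simultaneously.

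If a more hands-on argument is preferred, I would give direct matrix computations using the key identity $A D(\mu) = I \in \Real^{n \times n}$, which was established in the proof of Lemma~\ref{lem: properties of multigrid projection}. For the first identity, I compute
\begin{equation*}
  \Pi(\mu) S(\mu) = D(\mu) A D(\mu) [A(I-P+\mu\1^\t) D(\mu)]^{-1} A(I-P+\mu\1^\t) = D(\mu) [A(I-P+\mu\1^\t) D(\mu)]^{-1} A(I-P+\mu\1^\t) = S(\mu),
\end{equation*}
using $A D(\mu) = I$ to collapse the middle factor. For the second identity, I compute
\begin{equation*}
  S(\mu) \Pi(\mu) = D(\mu) [A(I-P+\mu\1^\t) D(\mu)]^{-1} A(I-P+\mu\1^\t) D(\mu) A = D(\mu) A = \Pi(\mu),
\end{equation*}
where now the bracketed inverse cancels against its argument $A(I-P+\mu\1^\t)D(\mu)$ on the right.

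There is no real obstacle here; the lemma is essentially a restatement of the fact that $S(\mu)$ and $\Pi(\mu)$ share the range $\rg(D(\mu))$, which is already built into the definitions. The only thing to be careful about is ensuring that the relevant inverse exists, but this was verified inside Lemma~\ref{lem: error propagation for coarse correction step}, so the two one-line calculations above constitute the full proof.
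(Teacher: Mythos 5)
Your abstract argument — that $S(\mu)$ and $\Pi(\mu)$ are both projections onto $\rg(D(\mu))$, so each acts as the identity on the range of the other — is exactly the paper's proof. Your supplementary direct matrix computations using $A D(\mu) = I \in \Real^{n\times n}$ are also correct and make the cancellations explicit, though the paper does not bother to spell them out.
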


\begin{proof}
  For convenience, we write $\Pi$ for $\Pi(\mu)$ and $S$ for $S(\mu)$. Since $S$ is a projection with $\rg(S)= \rg(\Pi)$ by Lemma~\ref{lem: error propagation for coarse correction step}, we have
  \begin{equation*}
    S \Pi = \Pi. 
  \end{equation*}
  Similarly, $\Pi S = S$. 
\end{proof}

As a consequence of Lemma~\ref{lem: products of s and pi}, we have the following block decomposition of $J(\mu)$. 

\begin{lemma}
  \label{lem: block decomposition of J}
  We have
  \begin{equation*}
    J(\mu) \Pi(\mu) = 0,
  \end{equation*}
  and
  \begin{equation*}
    \Pi(\mu) J(\mu)  = \Pi(\mu) - S(\mu).
  \end{equation*}
\end{lemma}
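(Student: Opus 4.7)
My plan is to prove the two identities directly using the two facts from Lemma~\ref{lem: products of s and pi} that $S(\mu)\Pi(\mu) = \Pi(\mu)$ and $\Pi(\mu)S(\mu) = S(\mu)$, together with one algebraic identity obtained from the explicit formula for $S(\mu)$.

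For the first identity, $J(\mu)\Pi(\mu) = 0$, I would expand the definition $J(\mu) = \hat P(I - S(\mu))$ and compute
\begin{equation*}
  J(\mu)\Pi(\mu) = \hat P(I - S(\mu))\Pi(\mu) = \hat P\bigl(\Pi(\mu) - S(\mu)\Pi(\mu)\bigr) = \hat P\bigl(\Pi(\mu) - \Pi(\mu)\bigr) = 0,
\end{equation*}
where the third equality uses $S(\mu)\Pi(\mu) = \Pi(\mu)$ from Lemma~\ref{lem: products of s and pi}. This step is essentially automatic.

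The second identity, $\Pi(\mu)J(\mu) = \Pi(\mu) - S(\mu)$, will be the main obstacle, since it is not immediate from Lemma~\ref{lem: products of s and pi} alone. The key observation I would establish is the auxiliary identity
\begin{equation*}
  \Pi(\mu)(I - \hat P) S(\mu) = \Pi(\mu)(I - \hat P),
\end{equation*}
which follows by plugging in $S(\mu) = D(\mu)[A(I-\hat P)D(\mu)]^{-1}A(I-\hat P)$ and $\Pi(\mu) = D(\mu)A$, so that the bracketed inverse cancels against $A(I-\hat P)D(\mu)$, leaving $D(\mu)A(I-\hat P) = \Pi(\mu)(I-\hat P)$.

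Given this, I would write $\hat P = I - (I - \hat P)$ to obtain
\begin{equation*}
  \Pi(\mu)J(\mu) = \Pi(\mu)(I - S(\mu)) - \Pi(\mu)(I - \hat P)(I - S(\mu)).
\end{equation*}
The second term vanishes by the auxiliary identity, and the first equals $\Pi(\mu) - \Pi(\mu)S(\mu) = \Pi(\mu) - S(\mu)$ by the other half of Lemma~\ref{lem: products of s and pi}. This yields the claim. The entire proof is short; the only nonroutine step is recognizing the cancellation that gives the auxiliary identity for $\Pi(\mu)(I - \hat P) S(\mu)$.
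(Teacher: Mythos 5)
Your proof is correct, and for the second identity it takes a genuinely different (and cleaner) route than the paper. The argument for $J(\mu)\Pi(\mu)=0$ coincides with the paper's. For $\Pi(\mu)J(\mu)=\Pi(\mu)-S(\mu)$, the paper expands $\Pi(\mu)\hat P S(\mu)$ directly, rewrites $A(I-\hat P)D(\mu)=I-A\hat P D(\mu)$ using $AD(\mu)=I$, and then invokes the algebraic identity $X(I-X)^{-1}=(I-X)^{-1}-I$ with $X=A\hat P D(\mu)$ to absorb the $\hat P$ factor back into $S(\mu)$; this cancellation is correct but a bit opaque. You instead isolate the auxiliary identity $\Pi(\mu)(I-\hat P)S(\mu)=\Pi(\mu)(I-\hat P)$, which falls out in one line because $A(I-\hat P)D(\mu)$ cancels against $[A(I-\hat P)D(\mu)]^{-1}$ in the explicit formula for $S(\mu)$, and then split $\hat P = I-(I-\hat P)$. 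Note that your auxiliary identity is equivalent to $\Pi(\mu)(I-\hat P)(I-S(\mu))=0$, which the paper does prove, but only later and independently, in Appendix~\ref{apx: proof of exact spectrum} as part of the proof of Theorem~\ref{thm: exact spectrum of J} (there phrased as $\Pi M = 0$ for $M=(I-\hat P)(I-S(\mu))$). So your route is shorter, makes the key cancellation more transparent, and surfaces early a structural fact that the paper needs again anyway; if anything, adopting your lemma-style statement of $\Pi(\mu)(I-\hat P)(I-S(\mu))=0$ would streamline both proofs.
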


\begin{proof}
  For convenience, we write $J$ for $J(\mu)$, $\Pi$ for $\Pi(\mu)$, and $S$ for $S(\mu)$. By Lemma~\ref{lem: products of s and pi}, $S \Pi = \Pi$, so 
  \begin{equation}
    J \Pi = \hat P (I - S) \Pi = 0.
  \end{equation}
  We also have
  \begin{align}
    \Pi J &= \Pi \hat P (I - S) \nonumber \\
          &= \Pi \hat P - \Pi \hat P S \nonumber \\
          &= \Pi \hat P - D [A \hat P D (I - A \hat P D )^{-1}] A (I - \hat P)\nonumber \\
          &= \Pi \hat P - D [(I - A \hat P D )^{-1} - I]  A (I - \hat P)\nonumber \\
    &=  \Pi \hat P - D [(A(I - \hat P) D )^{-1} - I]  A (I - \hat P)\nonumber \\
          &= \Pi \hat P - S + \Pi (I - \hat P)\nonumber \\
          &=\Pi - S. \nonumber
  \end{align}
The fifth inequality above follows since $AD = I \in \Real^{n \times n}$. 
\end{proof}

By Lemma~\ref{lem: block decomposition of J},
\begin{equation}
  \label{eqn: j is j times i minus pi}
  J(\mu) = J(\mu)(I - \Pi(\mu)).
\end{equation}
This will be important in several of our proofs. It implies, for example, that
\begin{equation}
  \label{eqn: spectrum of j is spectrum of i - pi times j}
  \sigma(J(\mu)) = \sigma(J(\mu)(I - \Pi(\mu)) = \sigma((I- \Pi(\mu))J(\mu)), 
\end{equation}
since $\sigma(AB)= \sigma(BA)$ for any matrices $A, B \in \Real^{N \times N}$. We now proceed with the proof of Lemma~\ref{lem: eps norm of J}.

\begin{proof}[Proof of Lemma~\ref{lem: eps norm of J}]
  For convenience, we write $J$ for $J(\mu)$, $\Pi$ for $\Pi(\mu)$, and $S$ for $S(\mu)$.
  For $x \in \Real^N$,
  \begin{align*}
    \lVert J x \rVert_\eps^2 &= \lVert J (I-\Pi) x \rVert_\eps^2 \\
                             &=\lVert (I- \Pi) J (I-\Pi)x \rVert_{1/\mu}^2 + \eps \lVert \Pi J (I-\Pi) x \rVert_{1/\mu}^2 \\
                             &\leq \left (\lVert (I- \Pi) J \rVert_{1/\mu}^2 + \eps \lVert \Pi J \rVert^2_{1/\mu} \right ) \lVert (I- \Pi) x \rVert_{1/\mu}^2 \\
                             &\leq \left (\lVert (I- \Pi) J \rVert_{1/\mu}^2 + \eps \lVert \Pi J \rVert^2_{1/\mu} \right ) \lVert x \rVert_\eps^2  \\
    &= \left (\lVert (I- \Pi) J \rVert_{1/\mu}^2 + \eps \lVert \Pi - S \rVert^2_{1/\mu} \right ) \lVert x \rVert_\eps^2
  \end{align*}
  The first equality holds by~\eqref{eqn: j is j times i minus pi}. The inequality in the second to last step holds since $I- \Pi$ is an orthogonal projection and therefore $\lVert I - \Pi\rVert_{1/\mu} = 1$. The last equality holds by Lemma~\ref{lem: block decomposition of J}.
\end{proof}

\subsection{Proof of Theorem~\ref{lem: first estimate of norm of projection of J onto conditionals}}
\label{apx: proof of estimate of norm of projection of J}

We derive an upper bound on $\lVert (I - \Pi(\mu)) J(\mu) \rVert_{1/\mu}$.

\begin{proof}[Proof of Theorem~\ref{lem: first estimate of norm of projection of J onto conditionals}]
  For convenience, we write $J$ for $J(\mu)$, $\Pi$ for $\Pi(\mu)$, etc. We write $\lVert \cdot \rVert$ for $\lVert \cdot \rVert_{1/\mu}$, $\langle, \rangle$ for $\langle, \rangle_{1/\mu}$, and $\hat P^\ast$ for $\hat P^{\ast, 1/\mu}$. First, observe that since at least one coarse state contains more than one fine state, we have $I- \Pi \neq 0$.
 Since $J = J (I- \Pi)$ by~\eqref{eqn: j is j times i minus pi}, and since $I- \Pi$ is an orthogonal projection, we have  
  \begin{equation}\label{eq: only have to test on rg i minus pi}
    \lVert (I - \Pi) J  \rVert = \lVert (I-\Pi) J (I- \Pi) \rVert = \max_{\substack{x \in \rg(I- \Pi) \\ \lVert x \rVert=1}} \lVert (I - \Pi ) J x \rVert. 
  \end{equation}
  Now we claim 
  \begin{equation}\label{eq: first lyapunov function}
    \lVert (I - \Pi) x \rVert^2 - \lVert (I- \Pi) J x \rVert^2 = \langle (I- S)x, (I - \hat P^\ast \hat P) (I - S) x \rangle.
  \end{equation}
  for all $x \in \Real^N$.
  To prove this, observe that
  \begin{align*}
    \lVert (I - \Pi) x \rVert^2 - \lVert (I-\Pi) J x \rVert^2 &= \lVert (I-\Pi) x \rVert^2 - (\lVert J x \rVert^2 - \lVert \Pi J x \rVert^2) \\
     &= \lVert (I - \Pi) x \rVert^2 - \lVert \hat P (I-S) x \rVert^2 + \lVert (\Pi-S) x \rVert^2 \\
                                                     &= \lVert (I - \Pi) x \rVert^2 - \lVert \hat P (I-S) x \rVert^2 + \lVert - S(I-\Pi) x \rVert^2 \\
                                                     &= \lVert (I-\Pi) x - S(I - \Pi)x \rVert^2 - \lVert \hat P (I-S) x \rVert^2\\
                                                              &= \lVert (I-S) x \rVert^2 - \lVert \hat P (I-S) x \rVert^2 \\
    &= \langle (I- S)x, (I - \hat P^\ast \hat P) (I - S) x \rangle.
  \end{align*}
The first equality above follows since $\Pi$ is an orthogonal projection by Lemma~\ref{lem: properties of multigrid projection}, and therefore $\lVert \Pi J x \rVert^2 + \lVert (I - \Pi) J x \rVert^2 = \lVert J x \rVert^2$ by the Pythagoras theorem.  The second follows since $\Pi J = \Pi - S$ by Lemma~\ref{lem: block decomposition of J}. The third follows since $S\Pi = \Pi$ by Lemma~\ref{lem: products of s and pi}, so $\Pi - S = - S(I - \Pi)$. The fourth follows from the Pythagoras theorem, since $\rg(S) = \rg(\Pi)$ and therefore $ (I - \Pi) x$ and $- S (I - \Pi) x$ are orthogonal. The fifth follows since $(I-S)(I- \Pi) = I - S$ because $S \Pi  = \Pi$ by Lemma~\ref{lem: products of s and pi}.

  Combining the results of the last paragraph yields
  \begin{align*}
    \lVert (I - \Pi) J \rVert^2 &=  \max_{\substack{x \in \rg(I- \Pi) \\ \lVert x \rVert=1}} \lVert (I - \Pi ) J x \rVert^2 \\
                              &=  \max_{\substack{x \in \rg(I- \Pi) \\ \lVert x \rVert=1}} \lVert (I - \Pi) x \rVert^2 -\langle (I- S)x, (I - \hat P^\ast \hat P) (I - S) x \rangle \\
    &=  1- \min_{\substack{x \in \rg(I- \Pi) \\ x \neq 0}} \frac{\langle (I- S)x, (I - \hat P^\ast \hat P) (I - S) x \rangle}{\lVert (I - \Pi) x \rVert^2}.
  \end{align*}
  We now make the change of variable $z = (I-S)x$ in the above minimization problem. 
  By Lemma~\ref{lem: products of s and pi},
  \begin{equation*}
     (I - \Pi) x = (I - \Pi)(I - S) x = (I - \Pi) z. 
   \end{equation*}
   Moreover, since $(I - S)(I- \Pi) = (I-S)$ by Lemma~\ref{lem: products of s and pi}, we have $(I- S) \rg(I-\Pi) = \rg (I- S)$. Also, $\rg(I-S)$ has the same dimension as $\rg(I-\Pi)$ because $S$ and $\Pi$ are both projections on $\rg (D)$. Therefore, $(I- S) (\rg(I-\Pi)\setminus \{0\}) = \rg (I- S)\setminus \{0\}$. 
   It follows that
   \begin{align*}
     \lVert (I - \Pi) J \rVert^2 &= 1 - \min_{\substack{x \in \rg(I- \Pi) \\ x \neq 0}} \frac{\langle (I- S)x, (I - \hat P^\ast \hat P) (I - S) x \rangle}{\lVert (I - \Pi) x \rVert^2} \\
                                 &= 1 - \min_{\substack{z \in \rg(I-S) \\ z \neq 0}} \frac{\langle z, (I - \hat P^\ast \hat P) z \rangle}{\lVert (I - \Pi) z \rVert^2}, 
   \end{align*}
   which proves the first claim in the statement of this lemma. 

   To prove the second claim, we make the change of variable $w = (I-\hat P^\ast \hat P)^\frac12 z$. The square root  $(I-\hat P^\ast \hat P)^\frac12$ exists since $\rho( \hat P^\ast \hat P ) <1$ by Lemma~\ref{lem: convergence of power method}, and therefore $I - \hat P^\ast \hat P$ is symmetric and positive definite with respect to the $\ell^2(1/\mu)$-inner product. See the proof of Theorem~\ref{thm: estimate of rate of convergence with spectrum and angle involved} for a detailed proof of the existence of a similar square root. 
   We have
 \begin{align*}
   \lVert (I - \Pi) J \rVert^2 &= 1-  \min_{w \in  (I-\hat P^\ast \hat P)^\frac12 \rg(I-S)} \frac{\lVert w \rVert^2}{\lVert (I - \Pi) (I-\hat P^\ast \hat P)^{-\frac12} w \rVert^2} \\
                               &\leq 1 - \frac{1}{\lVert (I - \Pi) (I-\hat P^\ast \hat P)^{-\frac12} \rVert^2}.
 \end{align*}
 Note that the denominator here is non-zero because $I-\Pi\neq 0$ and $(I-\hat P^\ast \hat P)^{-\frac12}$ is invertible. 
 
 Now observe that
 \begin{align*}
   \lVert (I - \Pi) (I-\hat P^\ast \hat P)^{-\frac12} \rVert^2 &= \rho(( (I - \Pi) (I-\hat P^\ast \hat P)^{-\frac12})^\ast  (I - \Pi) (I-\hat P^\ast \hat P)^{-\frac12} ) \\
                                                               &= \rho ((I - \Pi)^\ast  ((I-\hat P^\ast \hat P)^{-\frac12})^\ast (I-\hat P^\ast \hat P)^{-\frac12} (I - \Pi)) \\
   &= \rho((I - \Pi)(I-\hat P^\ast \hat P)^{-1}(I - \Pi)) \\
                                                               &= \lVert (I - \Pi) (I-\hat P^\ast \hat P)^{-1} (I - \Pi) \rVert.
 \end{align*}
 The first equality above follows since for any $M \in \Real^{N \times N}$ we have $\lVert M \rVert^2 = \rho(M^\ast M)$ by~\eqref{eq: norm in terms of spectrum}.
 The third equality above follows since $\Pi$ is an orthogonal projection, so $\Pi^\ast = \Pi$. 
\end{proof}

\subsection{Proof of Theorem~\ref{thm: local convergence}}
\label{apx: local convergence}

We prove local convergence of IAD. 
\begin{proof}[Proof of Theorem~\ref{thm: local convergence}]
 By Theorem~\ref{lem: first estimate of norm of projection of J onto conditionals}, $\lVert (I- \Pi) J(\mu) \rVert_{1/\mu} <1$, and so for $\eps>0$ sufficiently small
  \begin{equation*}
    \lVert J(\mu) \rVert_\eps < \sqrt{\lVert (I- \Pi(\mu)) J(\mu) \rVert_{1/\mu}^2 + \eps \lVert \Pi(\mu) - S(\mu) \rVert^2_{1/\mu}} <1 
  \end{equation*}
  by  Lemma~\ref{lem: eps norm of J}.
  We observe that $J(\nu)$ is continuous as a mapping from the set of positive probability vectors to the space of operators on $\Real^N$ with any choice of norms. This is because $S(\nu)$ is continuous, and so is the operator product.  Therefore, there exist $r >0$  and $\zeta <1$ so that if  $\lVert \nu - \mu \rVert_{\eps} \leq r$, then 
\begin{equation*}
  \lVert J(\nu) \rVert_\eps <  \zeta.
\end{equation*}
The result follows.
\end{proof}

\section{Proofs of results stated in Section~\ref{sec: convergence rate}}

\subsection{Proof of Lemma~\ref{lem: asymptotic rate of convergence}}
\label{apx: asymptotic rate of convergence}

\begin{proof}

  Let $\lVert \cdot \rVert$ denote both the norm on $\Real^N$ and also the induced operator norm on $\Real^{N \times N}$. 
  By Lemma~\ref{lem: error propagation formula},
  \begin{align*}
    \limsup_{K \rightarrow \infty} \lVert \mu^K - \mu \rVert^{\frac1K}
    &= \limsup_{K \rightarrow \infty} \left \lVert \prod_{i=0}^{K-1} J(\mu^i) (\mu^0 - \mu)  \right \rVert^{\frac1K} \\
    & \leq \limsup_{K \rightarrow \infty} \left \lVert \prod_{i=0}^{K-1} J(\mu^i) \right \rVert^{\frac1K}. 
  \end{align*}
  By Gelfand's formula,
  \begin{equation*}
    \lim_{n \rightarrow \infty} \lVert J(\mu)^n \rVert^\frac1n = \rho(J(\mu)).
  \end{equation*}
  Thus, for any $\delta >0$ there exists an $N$ so that 
  \begin{equation*}
    \lVert J(\mu)^N \rVert^\frac1N \leq \rho(J(\mu)) + \delta.
  \end{equation*}
  Now write
  \begin{align*}
    & \log \left (\left \lVert \prod_{i=0}^{K-1} J(\mu_i) \right \rVert^\frac1K \right ) \\
    &\qquad \leq \frac1K \left \{ \sum_{i=0}^{\lfloor (K-1)/N \rfloor} \log \left \lVert \prod_{j=(i-1)N}^{iN-1} J(\mu_j) \right \rVert + \log \left  \lVert \prod_{j=\lfloor (K-1)/N \rfloor N}^{K-1} J(\mu_j) \right \rVert \right \} \\
    &\qquad \leq \frac{1}{\lfloor (K-1)/N \rfloor} \sum_{i=1}^{\lfloor (K-1)/N \rfloor} \frac{1}{N} \log \left \lVert \prod_{j=(i-1)N}^{iN-1} J(\mu_j) \right \rVert
      + \frac1K \sum_{j=\lfloor (K-1)/N \rfloor N}^{K-1} \log \left  \lVert  J(\mu_j) \right \rVert \\
   &\qquad =: A_K + B_K.
  \end{align*}
    Since $\lVert J(\mu_n) - J(\mu) \rVert \rightarrow 0$, we have
  \begin{equation*}
    \lim_{M \rightarrow \infty} \log \left \lVert \prod_{i=M+1}^{M+N} J(\mu_i) \right \rVert = \log \left \lVert J(\mu)^N \right \rVert \leq N \log( \rho(J(\mu))+\delta),
  \end{equation*}
  and so
  \begin{equation*}
    \lim_{K \rightarrow \infty} A_K = \frac1N \log \lVert J(\mu)^N \rVert \leq \log(\rho(J(\mu)) + \delta). 
  \end{equation*}
  Moreover, $\lim_{K\rightarrow \infty} B_K = 0$ and the result follows.
\end{proof}

\subsection{Proof of Theorem~\ref{thm: estimate of rate of convergence with spectrum and angle involved}}
\label{apx: estimate of rate with spectrum and angle}

In the proof of Theorem~\ref{thm: estimate of rate of convergence with spectrum and angle involved}, we use that the norms $\lVert \cdot \rVert_{1/\mu}$ and $\lVert \cdot \rVert_\mu$ are dual with respect to the unweighted $\ell^2(\1)$-inner product, and therefore $\lVert M \rVert_{1/\nu} = \lVert M^\t \rVert_\nu$ for any $M \in \Real^{N \times N}$. Similar results are well-known, and this might all be obvious to the reader, but we are unable to find a reference, so we offer a proof below. 

\begin{lemma}
  \label{lem: dual operator norms}
  For any $M \in \Real^{N \times N}$ and any positive $\nu \in \Real^N$, we have
  \begin{equation*}
    \lVert M \rVert_{1/\nu} = \lVert M^\t \rVert_\nu.
  \end{equation*}
\end{lemma}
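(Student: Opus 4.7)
The plan is to reduce the weighted operator norm to the unweighted one by conjugating with a diagonal rescaling, and then to use the fact that $\lVert A \rVert_{\mathbf{1}} = \lVert A^\t \rVert_{\mathbf{1}}$ for the standard Euclidean operator norm, which is classical.

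Concretely, I would set $D = \diag(\nu)$ and $D^{1/2} = \diag(\nu^{1/2})$, so that for any $x \in \Real^N$ one has $\lVert x \rVert_{1/\nu} = \lVert D^{-1/2} x \rVert_{\mathbf{1}}$ and $\lVert x \rVert_{\nu} = \lVert D^{1/2} x \rVert_{\mathbf{1}}$. Substituting $y = D^{-1/2} x$ in the variational definition gives
\begin{equation*}
\lVert M \rVert_{1/\nu} = \sup_{x \neq 0} \frac{\lVert D^{-1/2} M x \rVert_{\mathbf{1}}}{\lVert D^{-1/2} x \rVert_{\mathbf{1}}} = \sup_{y \neq 0} \frac{\lVert D^{-1/2} M D^{1/2} y \rVert_{\mathbf{1}}}{\lVert y \rVert_{\mathbf{1}}} = \lVert D^{-1/2} M D^{1/2} \rVert_{\mathbf{1}}.
\end{equation*}
The analogous change of variable $y = D^{1/2} x$ yields $\lVert M^\t \rVert_{\nu} = \lVert D^{1/2} M^\t D^{-1/2} \rVert_{\mathbf{1}}$.

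The proof then finishes by invoking the well-known identity $\lVert A \rVert_{\mathbf{1}} = \lVert A^\t \rVert_{\mathbf{1}}$ (equivalently, $\rho(A^\t A) = \rho(A A^\t)$) applied to $A = D^{-1/2} M D^{1/2}$, whose transpose is exactly $D^{1/2} M^\t D^{-1/2}$. I do not expect a real obstacle here: the only thing to be careful about is that $\nu > 0$, so that $D^{1/2}$ and its inverse are well-defined, and that the suprema are indeed attained on the same sets after the change of variable (which they are, because $y \mapsto D^{1/2} y$ is a bijection of $\Real^N \setminus \{0\}$).
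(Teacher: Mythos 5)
Your proof is correct, and it takes a genuinely different route from the paper. The paper works through the dual-norm characterization: it first shows $\diag(1/\nu)$ is an isometry from $\ell^2(1/\nu)$ to $\ell^2(\nu)$, uses this to write $\lVert x \rVert_{1/\nu} = \max_{\lVert z \rVert_\nu = 1} \langle x, z \rangle_{\mathbf 1}$, and then swaps the order of two maxima and moves $M$ across the unweighted pairing. You instead conjugate by the symmetric square root $D^{1/2} = \diag(\nu^{1/2})$ to reduce both weighted operator norms to unweighted ones, observe that the conjugated matrix and the conjugated transpose are transposes of one another (using that $D^{\pm 1/2}$ are symmetric), and finish with the classical fact $\lVert A \rVert_{\mathbf 1} = \lVert A^\t \rVert_{\mathbf 1}$. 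Both arguments are short; yours has the advantage of packaging the work into a single similarity transformation and a black-box appeal to a standard identity, whereas the paper's is self-contained in the sense that it re-derives the transpose symmetry at the level of bilinear forms rather than invoking it as a known fact about the spectral norm. Either would serve the paper equally well.
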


\begin{proof}
  First, observe that
  \begin{align*}
    \lVert y \rVert_{1/\nu}^2 &= \langle y, \diag(1/\nu) y \rangle_\1 \\
                              &= \langle \diag(1/\nu) y, \diag(\nu)  \diag(1/\nu) y \rangle_\1 \\
                              &= \lVert \diag(1/\nu) y \rVert^2_\nu,
  \end{align*}
  so $\diag(1/\nu)$ is an isometry mapping $\ell^2(1/\nu)$ to $\ell^2(\nu)$.
  Therefore, we have
  \begin{align*}
    \lVert x \rVert_{1/\nu} &= \max_{\lVert y \rVert_{1/\nu} =1} \langle x, y \rangle_{1/\nu} \\
                            &= \max_{\lVert y \rVert_{1/\nu} =1} \langle x, \diag(1/\nu) y \rangle_{\1} \\
                            &= \max_{\lVert z \rVert_{\nu} =1} \langle x,  z \rangle_{\1}.
  \end{align*}
  The first equality above follows from the Cauchy--Schwarz inequality for $\ell^2(1/\nu)$. The last equality follows by making the change of variable $z=  \diag(1/\nu)$ and using that $\diag(1/\nu)$ is an isometry mapping $\ell^2(1/\nu)$ to $\ell^2(\nu)$.
  We now have  
  \begin{align*}
    \lVert M \rVert_{1/\nu} &= \max_{\lVert x \rVert_{1/\nu}=1} \lVert M x \rVert_{1/\nu} \\
                            &= \max_{\lVert z \rVert_{\nu}=1} \max_{\lVert x \rVert_{1/\nu}=1} \langle Mx, z \rangle_\1 \\
                            &= \max_{\lVert z \rVert_{\nu}=1} \max_{\lVert x \rVert_{1/\nu}=1} \langle x, M^\t z \rangle_\1 \\
                            &= \max_{\lVert z \rVert_{\nu}=1} \lVert M^\t z \rVert_\nu \\
                            &= \lVert M^\t \rVert_\nu,
  \end{align*}
  as claimed.
\end{proof}

We now prove Theorem~\ref{thm: estimate of rate of convergence with spectrum and angle involved}. 

\begin{proof}[Proof of Theorem~\ref{thm: estimate of rate of convergence with spectrum and angle involved}]
  For convenience, we write $\lVert \cdot \rVert$ for $\lVert \cdot \rVert_{1/\mu}$, $\Pi$ for $\Pi(\mu)$, and $M^\ast$ for $M^{\ast, 1/\mu}$.
  First, we observe that
  \begin{equation}
    \label{eqn: bound on rho by i minus pi times J}
    \rho(J) \leq \inf_{\eps >0} \lVert J \rVert_\eps =  \lVert ( I - \Pi ) J\rVert 
  \end{equation}
  by Lemma~\ref{lem: eps norm of J}.
  Also, observe that $I - \Pi \neq 0$, since at least one coarse state contains more than one fine state.
  
    We now estimate $\lVert (I - \Pi) (I - \hat P^\ast \hat P)^{-1} (I - \Pi) \rVert$. By Theorem~\ref{lem: first estimate of norm of projection of J onto conditionals}, an upper bound on this expression implies an upper bound on $\lVert (I- \Pi) J \rVert^2$, and therefore on $\rho(J)^2$ by~\eqref{eqn: bound on rho by i minus pi times J}.
  Since $\rho(\hat P^\ast \hat P) <1$ by Lemma~\ref{lem: convergence of power method},  $I - \hat P^\ast \hat P$ is symmetric positive definite with respect to the $\ell^2(1/\mu)$-inner product, and therefore so is $(I - \hat P^\ast \hat P)^{-1}$. It follows that there exists a square root                             
  \begin{equation*}
    L = (I - \hat P^\ast \hat P)^{-\frac12} = \sum_{k=2}^N (1-\lambda_k)^{-\frac12} v_i v_i^\t \diag(1/\mu),
  \end{equation*}
  which is also symmetric positive definite.
  We note that $L$ must commute with the spectral projector $Q$. Also,
  \begin{equation}
    \label{eq: norms of LQ and L(I-Q)}
    \lVert LQ \rVert = (1-\lambda_2)^{-\frac12} \text{ and } \lVert L(I-Q) \rVert =(1- \lambda_{k+1})^{-\frac12}.
\end{equation}
These facts follow directly from the above formula for $L$ in terms of the diagonalization of $P^\ast P$. 

 Since $(I-\Pi)$ is an orthogonal projection, $(I- \Pi)^\ast = I-\Pi$, and we have
  \begin{align*}
    \lVert (I - \Pi) (I - \hat P^\ast \hat P)^{-1} (I - \Pi) \rVert &= \lVert (I - \Pi) L^2 (I - \Pi) \rVert \\
                                                                    &= \lVert (L (I - \Pi))^\ast L (I - \Pi) \rVert \\
    &= \lVert L(I - \Pi) \rVert^2.
  \end{align*}
  The last equality above follows since for any $M \in \Real^{N\times N}$, $\lVert M^\ast M \rVert = \lVert M \rVert^2$ by~\eqref{eq: norm in terms of spectrum}.
We have
  \begin{align*}
    \lVert L (I- \Pi) \rVert^2 &= \max_{\lVert x \rVert=1} \lVert L (I- \Pi) x \rVert^2  \\
                               &= \max_{\lVert x \rVert=1} \lVert Q L (I- \Pi) x \rVert^2 + \lVert (I-Q )L(I- \Pi) x \rVert^2 \\
                               &= \max_{\lVert x \rVert=1} \lVert  LQ (I- \Pi) x \rVert^2 + \lVert L (I-Q )(I- \Pi) x \rVert^2 \\
    &= \max_{\lVert x \rVert=1} \lVert  LQ Q (I- \Pi) x \rVert^2 + \lVert L (I-Q )(I-Q)(I- \Pi) x \rVert^2 \\
                               &\leq \max_{\lVert x \rVert=1} \lVert LQ \rVert^2 \lVert Q(I- \Pi) x \rVert^2 + \lVert L (I-Q ) \rVert^2 \lVert (I-Q)(I- \Pi) x \rVert^2 \\
    &= \max_{\lVert x \rVert=1} \frac{1}{1-\lambda_1}  \lVert Q(I- \Pi) x \rVert^2 + \frac{1}{1- \lambda_{k+1}} \lVert (I-Q)(I- \Pi) x \rVert^2 \\
  \end{align*}
The second equality in the above display follows from the Pythagoras law, since $Q$ is an orthogonal projection. The third equality follows since $L$ and $Q$ commute, as explained in the previous paragraph. The last equality follows from our formulas in~\eqref{eq: norms of LQ and L(I-Q)} for the norms of $LQ$ and $L(I-Q)$.

Now observe that
\begin{equation*}
  \lVert (I - Q)(I- \Pi) x \rVert^2 +  \lVert  Q (I- \Pi) x \rVert^2 =  \lVert (I- \Pi) x \rVert^2 \leq 1,
\end{equation*}
again by the Pythagoras law.
Therefore,
\begin{equation}
  \label{eq: bound on i minus q times i minus pi}
  \lVert (I - Q)(I- \Pi) x \rVert^2 \leq 1 -   \lVert  Q (I- \Pi) x \rVert^2.
\end{equation}
By Lemma~\ref{lem: dual operator norms},
\begin{equation}
  \label{eq: dual angle formula}
  \lVert Q (I - \Pi) \rVert = \lVert (I - \Pi^\t) Q^\t \rVert_\mu = \sin(\theta).
\end{equation}
Combining~\eqref{eq: bound on i minus q times i minus pi} and~\eqref{eq: dual angle formula} with the results of the previous paragraph yields
\begin{align*}
   \lVert (I - \Pi) (I - \hat P^\ast \hat P)^{-1} (I - \Pi) \rVert
  &\leq \max_{\lVert x \rVert=1} \frac{1}{1-\lambda_1}  \lVert Q(I- \Pi) x \rVert^2 + \frac{1}{1- \lambda_{k+1}} \lVert (I-Q)(I- \Pi) x \rVert^2 \\
                                 &\leq \max_{0 \leq \alpha \leq \sin^2(\theta)}  \frac{1}{1-\lambda_1}  \alpha^2 + \frac{1}{1- \lambda_{k+1}} (1-\alpha^2) \\
    &= \frac{1}{1-\lambda_1}  \sin^2(\theta) + \frac{1}{1- \lambda_{k+1}} \cos^2(\theta).
\end{align*}
The result now follows by Theorem~\ref{lem: first estimate of norm of projection of J onto conditionals}. 
\end{proof}

\subsection{Proof of Theorem~\ref{thm: exact spectrum of J}}
\label{apx: proof of exact spectrum}

\begin{proof}
  For convenience, we write $J$ for $J(\mu)$, $\Pi$ for $\Pi(\mu)$, etc.
  Note that $I- \Pi \neq 0$ and $\Pi \neq 0$, since there is more than one coarse state and at least one coarse state contains more than one fine state. 
  Recall that $J$ has the same spectrum as $(I-\Pi)J$ by~\eqref{eqn: spectrum of j is spectrum of i - pi times j}.
  Observe that $0 \in \sigma((I-\Pi) J)$, since we have $J = J(I-\Pi)$ by~\eqref{eqn: j is j times i minus pi}, and so $Jx =0$ for any $x \in \rg (\Pi)$. Now suppose that
  \begin{equation}
    \label{eq: first step in exact spectrum formula}
    (I-\Pi)Jx=\lambda x
  \end{equation}
  for some $\lambda \neq 0$ and $x \neq 0$. 
  We will show that $x$ is an eigenvector of $(I - \Pi) (I - \hat P)^{-1} (I - \Pi)$ with eigenvalue $(1 - \lambda)^{-1}$. 
  (Note that $\lambda\ne1$ because $\rho((I-\Pi)J)<1$ by Theorem~\ref{lem: first estimate of norm of projection of J onto conditionals}, so $(1-\lambda)^{-1}$ is defined.) Since $\Pi J = \Pi - S$ by Lemma~\ref{lem: block decomposition of J}, we have 
  \begin{align*}
    (I-\Pi)-(I-\Pi)J &= I - \Pi - J + \Pi - S \\
                     &= I -S - \hat P(I-S) \\
                     &=(I-\hat{P})(I-S).
  \end{align*}
  Since $\lambda \neq 0$, the eigenvalue equation~\eqref{eq: first step in exact spectrum formula} implies that $x\in \rg(I-\Pi)$. Therefore, since $I - \Pi$ is a projection, we have $(I-\Pi)x = x$, and  
  \begin{equation*}
    (I-\hat{P})(I-S)x=(I- \Pi)x-(I-\Pi)Jx=(1-\lambda) (I - \Pi) x.
  \end{equation*}
  Multiplying on both sides above by $(I- \Pi)(I- \hat P)^{-1} $ yields
  \begin{align*}
    (1- \lambda)(I - \Pi) (I- \hat P)^{-1} (I- \Pi) x &= (I - \Pi) (I - S) x \\
                                              &= (I- \Pi) x\\
    &= x,
  \end{align*}
since we have $(I-\Pi) (I-S) = I - \Pi$ by Lemma~\ref{lem: products of s and pi}.
  Thus, $x$ is an eigenvector of $(I-\Pi)(I-\hat{P})^{-1} (I - \Pi)$ with eigenvalue $(1- \lambda)^{-1}$. We conclude that 
  \begin{equation*}
\sigma(J(\mu)) \subset \left(1-\frac{1}{\sigma((I-\Pi(\mu))(I-\hat{P})^{-1}(I-\Pi(\mu)))\setminus\{0\}}\right)\cup\{0\}.
  \end{equation*}

  It remains to prove the opposite inclusion. 
  Consider the operator
  \begin{equation*}
    M=(I-\hat{P})(I-S).
  \end{equation*}
  We have
  \begin{align*}
    \Pi M &= D A (I - \hat P) (I - D (A(I - \hat P) D)^{-1} A (I - \hat P)) \\
          &= DA (I - \hat P) - D A (I - \hat P) D (A(I - \hat P) D)^{-1} (I- \hat P)  \\
            &= 0,
  \end{align*}
  so $\rg(M) \subset \rg(\Pi)$.
  Moreover, since $I- \hat P$ is invertible, $\ker(M) = \rg(S) = \rg(\Pi)$. Therefore, when viewed as an operator on the range of $I-\Pi$, $M$ is invertible. 

  Now suppose that
  \begin{equation*}
    (I-\Pi)(I-\hat{P})^{-1}(I-\Pi)y=\alpha y
  \end{equation*}
  for some $\alpha \neq 0$ and $y \neq 0$. Then $y \in \rg(I-\Pi)$, and so we may write $y=Mx$ for some $x \in \rg(I-\Pi)$. Therefore,  
  \begin{align*}
    \alpha Mx&=(I-\Pi)(I-\hat{P})^{-1}(I-\Pi)Mx \\
             &=(I-\Pi)(I-\hat{P})^{-1}Mx \\
             &=(I - \Pi) (I- S)x \\
             &=(I-\Pi) x \\
             &=x,
  \end{align*}
  since $(I - \Pi) (I- S)  = I - \Pi$ and $x \in \rg(I-\Pi)$.
  Therefore, $\frac{1}{\alpha}$ is an eigenvalue of $M$ with eigenvector $x$. Now
  \begin{align*}
    (I-\Pi)J+M &= \left [ (I- \Pi) \hat P + (I - \hat P) \right ] (I -S) \\
               &= \left [ (I- \Pi) + \Pi(I - \hat P) \right ] (I -S) \\
               &= (I- \Pi) (I-S) + \Pi M \\
    &= (I- \Pi),
  \end{align*}
  since $\Pi M =0$ and $(I - \Pi) (I-S) = I- \Pi$ as explained above. Therefore,
  \begin{equation*}
    (I-\Pi)J=(I-\Pi)-M,
  \end{equation*}
  and $x\in \rg(I-\Pi)$ is an eigenvector of $(I-\Pi)J$ with eigenvalue $1-\frac{1}{\alpha}$. 
  Finally, $0$ is an eigenvector of $J$ because $Jx=0$ whenever $x\in \rg(\Pi)$. Therefore,
    \begin{equation*}
\sigma(J(\mu)) \supset \left(1-\frac{1}{\sigma((I-\Pi(\mu))(I-\hat{P})^{-1}(I-\Pi(\mu)))\setminus\{0\}}\right)\cup\{0\}.
  \end{equation*}
  
\end{proof}

\subsection{Proof of Corollary~\ref{cor: estimate of rate in reversible case}}
\label{apx: corollary for reversible case}

\begin{proof}[Proof of Corollary~\ref{cor: estimate of rate in reversible case}]
  For convenience, we write $\Pi$ for $\Pi(\mu)$, $J$ for $J(\mu)$ and $\lVert \cdot \rVert$ for $\lVert \cdot \rVert_{1/\mu}$. 
  Observe that $ (I- \Pi) (I - \hat P)^{-1} (I - \Pi)$ is self-adjoint as an operator on $\ell^2(1/\mu)$, since $P$ is reversible and $I- \Pi$ is an orthogonal projection, and so both $\hat P$ and $I- \Pi$ are self-adjoint. Therefore, $\sigma((I- \Pi) (I - \hat P)^{-1} (I - \Pi)) \subset \Real$. Since $P$ and $P^\t P$ are irreversible, we have $\rho(J) <1$ by Theorem~\ref{thm: local convergence}. Therefore by Theorem~\ref{thm: exact spectrum of J} we must have $\sigma((I- \Pi) (I - \hat P)^{-1} (I - \Pi)) \subset [0,\infty)$, because if any eigenvalue of $(I- \Pi) (I - \hat P)^{-1} (I - \Pi)$ were negative, then Theorem~\ref{thm: exact spectrum of J} would imply $\rho(J) >1$. It follows that
  \begin{align*}
    \rho(J) &= \max_{\lambda \in \sigma((I- \Pi) (I - \hat P)^{-1} (I - \Pi))} \left \lvert  1- \frac{1}{\lambda} \right \rvert \\
            &=  1 - \frac{1}{\rho((I- \Pi) (I - \hat P)^{-1} (I - \Pi))} \\
            &= 1 - \frac{1}{\lVert (I- \Pi) (I - \hat P)^{-1} (I - \Pi) \rVert }.
  \end{align*}
   The proof of the inequality is identical with the proof of Theorem~\ref{thm: estimate of rate of convergence with spectrum and angle involved}, except with $L = (I - \hat P)^{-\frac12}$ and $\sqrt{\lambda_i}$ in place of $\lambda_i$.
\end{proof}

\section{A Model of Overdamped Langevin Dynamics on a Two-Dimensional Grid}
\label{apx: 2d chain definition}

We define a family of Markov chains on a two-dimensional grid with properties similar to overdamped Langevin. Let $V : \Real^2 \rightarrow \Real$, $T >0$, $[a,b ] \times [c,d] \subset \Real^2$, and $N \in \mathbb{N}$. Define the discrete Boltzmann distribution $\mu \in \Real^{N \times N}$ by
\begin{equation}
  \label{eq: discrete Boltzmann}
  \mu_{ij} = Z^{-1} \exp \left (- \frac{ V \left ( a + \frac{b-a}{N} i, c + \frac{d-c}{N} i \right )}{T} \right ),
\end{equation}
where  $Z =\sum_{i =1}^N \exp \left (- \frac{ V \left ( a + \frac{b-a}{N} i, c + \frac{d-c}{N} i \right )}{T} \right )$.
Define a transition probability on $\Omega = \{1, \dots, N\}^2$ by
\begin{alignat*}{3}
P_{(i +k,j+\ell),(i,j)} &:= \frac{1}{4}\frac{\mu(i+k,j+\ell)}{\mu(i +k,j+\ell) + \mu(i,j)} &&\mbox{ for } k,\ell \in \{-1,1\} \text{ and } (i,j) \in \Omega \\
P_{(i,j),(i,j)} &:= 1- \sum_{k,\ell \in \{-1,1\}} P_{(i+k,j+\ell)} &&\mbox{ for } (i,j) \in \Omega \\
P_{(k,\ell),(i,j)} &:= 0 &&\mbox{ otherwise.} \\
\end{alignat*}
In the definition of $P$, we impose periodic boundary conditions, associating $(0,j)$ with $(N,j)$, $(i,1)$ with $(i,N+1)$, etc. Note that $P$  is in detailed balance with $\mu$.

\bibliographystyle{abbrv}
\bibliography{iad}
\end{document}